\pgfplotsset{compat=1.18}
\numberwithin{equation}{section}
\newcommand{\beq}{\begin{equation}}
	\newcommand{\bea}[1]{\begin{array}{#1} }
		\newcommand{\eeq}{ \end{equation}}
	\newcommand{\ea}{ \end{array}}
\newtheorem{theorem}{Theorem}[section]
\newtheorem{lemma}[theorem]{Lemma}
\newtheorem{proposition}[theorem]{Proposition}
\newtheorem{corollary}[theorem]{Corollary}
\newtheorem{remark}[theorem]{Remark}
\def\@settitle{\begin{center}%
		\baselineskip14\p@\relax
		\bfseries
		\uppercasenonmath\@title
		\@title
		\ifx\@subtitle\@empty\else
		\\[1ex]\uppercasenonmath\@subtitle
		\footnotesize\mdseries\@subtitle
		\fi
	\end{center}%
}
\def\subtitle#1{\gdef\@subtitle{#1}}
\def\@subtitle{}
\begin{document}
	\title[Finite-time blow-up for the three dimensional axially symmetric Keller-Segel system]{Finite-time blow-up for the three dimensional axially symmetric Keller-Segel system} 
\begin{abstract}
We construct axially symmetric finite-time blow-up solutions to the three-dimensional Keller–Segel system. By adapting gluing techniques, we derive a precise asymptotic expansion for Type II singularities that generalizes the recent work of Hou, Nguyen, and Song \cite{HNS}. In our construction the mass concentrates along multiple rings and we obtain a refined expansion for the blow-up rate.

\end{abstract}

\author[ F.~Buseghin]{Federico Buseghin}
\address{\noindent F.~Buseghin: Laboratoire AGM\\ CY Cergy Paris Universit\'e\\ 2 avenue Adolphe Chauvin, 95300 Pontoise, France}
\email{federico.buseghin@cyu.fr}

\author[J.~D\'avila]{Juan D\'avila}
\address{\noindent J.~D\'avila: Department of Mathematical Sciences University of Bath, Bath BA2 7AY, United Kingdom}
\email{jddb22@bath.ac.uk}

\author[M.~del Pino]{Manuel del Pino}
\address{\noindent M.~del Pino: Department of Mathematical Sciences University of Bath, Bath BA2 7AY, United Kingdom}
\email{mdp59@bath.ac.uk}

\author[M.~Musso]{Monica Musso}
\address{\noindent M.~Musso: Department of Mathematical Sciences University of Bath, Bath BA2 7AY, United Kingdom.}
\email{mm2683@bath.ac.uk}
\maketitle

\section{Introduction}
We study the three dimensional Keller-Segel system given by
\begin{align}\label{KS}
	\begin{cases}
		\partial_{t}u=\Delta u - \nabla \cdot(u\nabla v), \quad \text{in } \mathbb{R}^{3}\times (0,T),\\
		v=(-\Delta_{\mathbb{R}^{3}})u:=\frac{1}{4\pi}\int_{\mathbb{R}^{3}}\frac{1}{|x-y|}u(y,t)dy,\\
		u(\cdot,0)=u_{0}\ge0, \quad \text{in } \mathbb{R}^{3}.
	\end{cases}
\end{align}
The Keller-Segel system models \emph{chemotaxis}, a biological phenomenon describing the movement of organisms in response to chemical signals, for instance, the motion of slime mold \emph{Dictyostelium Discoideum} and the bacterium \emph{Escherichia Coli}. In \eqref{KS} $u(x,t)$ indicates the population density, while $v(x,t)$ represents the density of the external chemical stimulus usually called \emph{chemoattractant}. The model was originally introduced by Patlak \cite{Pat} and further developed by Keller and Segel \cite{KS}. For surveys and related mathematical developments, see \cite{H0}, \cite{Ch}. Following the pioneering contribution of W. J\"{a}ger and S. Luckhaus in two dimensions \cite{JL}, the topic has attracted considerable attention and has been extensively studied. In three dimensions it can be interpreted as a model of stellar dynamics influenced by friction and fluctuations.
Due to its divergence form, strong solutions to the Keller-Segel system conserve total mass:
\begin{align*}
	\int_{\mathbb{R}^{3}}u(x,t)\,dx=\int_{\mathbb{R}^{3}}u_{0}(x)\,dx=:M.
\end{align*}
Moreover, the system admits an important scaling symmetry. If $u(x,t)$ is a solution, then so is 
\[
u_{\lambda}(x,t)=\lambda^{2}u(\lambda x,\lambda^{2}t), \quad \text{for any } \lambda>0.
\]
We say that a solution blows up at finite time $T$ if 
\begin{align*}
	\limsup_{t\to T} \|u(t)\|_{L^{\infty}(\mathbb{R}^{3})}=+\infty.
\end{align*}
The blow-up is said to be of \emph{type I} if there exists a constant $C>0$ such that
\begin{align*}
	\limsup_{t\to T}(T-t)\|u(t)\|_{L^{\infty}(\mathbb{R}^{3})}\le C.
\end{align*}
Otherwise, it is classified as a \emph{type II} blow-up. The goal of this work is to construct a class of finite-time type II blow-up solutions to \eqref{KS}, in which mass concentrates along an arbitrary number of rings. This construction generalizes the recent work by Hou, Nguyen, and Song \cite{HNS}. \newline
There exists a rich literature on both the well-posedness and singularity formation in the Keller-Segel system. In two dimensions, the system is called $L^{1}$-critical, as the scaling $u\mapsto u_{\lambda}$ preserves the $L^{1}$-norm. In higher dimensions, the problem becomes $L^{1}$-supercritical, with scaling preserving the $L^{n/2}$-norm. \newline
In two dimensions, the chemoattractant $v$ in \eqref{KS} is determined via convolution with the Newtonian potential kernel $\Psi(x)=-\frac{1}{2\pi}\log |x|$ and a fundamental role is played by the stationary state
\begin{align*}
	U(y):=\frac{8}{(1+|y|^{2})^{2}}.
\end{align*}
It is known that $M=8\pi$ is the critical mass threshold between global existence and finite-time blow-up. When $M<8\pi$, global solutions exist and decay to zero \cite{BDP, BKLN}. At $M=8\pi$, both global regularity and infinite-time blow-up may occur \cite{BCM, DdPDMW, BCC}. For $M>8\pi$, various examples of finite-time blow-up solutions have been constructed. Type II blow-up profiles in the radial setting were obtained by Herrero and Vel\'azquez in \cite{HV3} (see also \cite{Ve, Ve1, Ve2} for stability analysis), and later by Rapha\"el and Schweyer \cite{RS}. 
More recently, Collot, Ghoul, Masmoudi, and Nguyen \cite{CGMN1, CGMN2} refined the results in \cite{RS} by constructing stable nonradial blow-up dynamics, where the solution satisfy
\begin{align}\label{finitetimeDyn}
	u(x,t)\approx \frac{1}{\lambda^{2}(t)}U\left(\frac{x}{\lambda(t)}\right), \quad \lambda(t)\approx 2e^{-\frac{\gamma+2}{2}}\sqrt{T-t}e^{-\sqrt{\frac{|\ln(T-t)|}{2}}},
\end{align}
with $\gamma=0.57721\ldots$ denoting the Euler-Mascheroni constant. In \cite{CGMN2}, they also constructed additional blow-up solutions with different, unstable blow-up rates. Mizoguchi \cite{Mi} later proved that \eqref{finitetimeDyn} is the only stable blow-up behavior among radial nonnegative solutions.

Given $k$ distinct points $\xi_{1}, \xi_{2},\ldots,\xi_{k}\in \mathbb{R}^{2}$, the authors in \cite{BDdPM} used gluing techniques to construct solutions satisfying the expansion
\[
u(x,t)\approx\sum_{j=1}^{k}\frac{1}{\lambda_{j}^{2}(t)}U\left(\frac{x-\xi_{j}}{\lambda_{j}(t)}\right), \quad \lambda_{j}(t) \approx 2e^{-\frac{\gamma+2}{2}}\sqrt{T-t}e^{-\sqrt{\frac{|\ln(T-t)|}{2}}}.
\]
In this paper, we aim to extend this construction to the axisymmetric three dimensional case. \newline 
Similar to the two dimensional case, in three dimensions there exists a threshold on $\|u_{0}\|_{L^{3/2}}$ that delineates global existence from finite-time blow-up. For small initial data, global solutions exist; see \cite{CPZ, CCE}. Unlike in two dimensions, higher-dimensional cases admit type I blow-up solutions \cite{HMV, S, NNZ, CZ, LiZh}. Type II radial blow-up via collapsing spheres has been rigorously established in \cite{CGMN3}. \newline 
\underline{The three-dimensional axially symmetric system:}  
Assume that the initial condition $u_0$ is axially symmetric, that is, $u_0(x) = u_0(r, z)$ with $x = (r e^{i\theta}, z)$. In this case, the system \eqref{KS} can be rewritten in the form
\begin{align}\label{KS3d}
	\begin{cases}
		\partial_{t} u = \Delta_{(r,z)} u - \nabla_{(r,z)} v \cdot \nabla_{(r,z)} u + u^2, & \text{in } \mathbb{R}^{+} \times \mathbb{R} \times (0,T), \\[1ex]
		- \Delta_{(r,z)} v - \dfrac{1}{r} v = u, & \\[1ex]
		u(\cdot, 0) = u_0, & \text{in } \mathbb{R}^{+} \times \mathbb{R},
	\end{cases}
\end{align}
where, in the second equation, the potential $v$ is understood as the solution obtained via the inverse Laplacian, which we always take to be the convolution with with the Newtonian potential kernel $\Psi(x)=\frac{1}{4\pi |x|}$.
In the axisymmetric case, Hou, Nguyen, and Song \cite{HNS} constructed solutions satisfying
\[
u(r,z,t)\approx\frac{1}{\lambda^{2}(t)} U\left(\frac{r-q}{\lambda(t)},\frac{z}{\lambda(t)}\right), \quad \lambda(t) \approx \sqrt{T-t}e^{-\sqrt{\frac{|\ln(T-t)|}{2}} + O(1)},
\]
for some $q>0$ and where $r=\sqrt{x^{2}+y^{2}}$. Their approach is inspired by the techniques developed in \cite{CGMN1, CGMN2}. The goal of the present work is to generalize the result in \cite{HNS} by employing the gluing techniques introduced in \cite{BDdPM}. \newline
The following is our main result. 
\begin{theorem}\label{Mthm}
	Given any $k$ distinct points $(r_{1},z_{1}), (r_{2},z_{2}),...,(r_{k},z_{k}) \in \mathbb{R}^{+}\times \mathbb{R}$, there is an initial data $u_{0}$ of \eqref{KS3d} for which the corresponding solution $u(x,t)$ has the form 
	\begin{align*}
		u(r,z,t)=\sum_{j=1}^{k}\frac{1}{\lambda_{j}^{2}(t)} U\big(\frac{(r,z)-(r_{j}(t),z_{j}(t))}{\lambda_{j}(t)} \big)(1+o(1)), \ \ \ U(y)=\frac{8}{(1+|y|^{2})^{2}},
	\end{align*} 
	uniformly on bounded sets of $\mathbb{R}^{+}\times \mathbb{R}$, with $\lambda_{j}(t)\approx 2e^{-\frac{\gamma +2}{2}}\sqrt{T-t}e^{-\sqrt{\frac{|\ln(T-t)|}{2}}}$ where $\gamma$ is the Euler-Mascheroni constant and $(r_{j}(t),z_{j}(t))\to (r_{j},z_{j})$.
\end{theorem}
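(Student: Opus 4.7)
The plan is to implement an inner--outer gluing scheme in the axially symmetric setting, extending \cite{BDdPM} to the three-dimensional Keller--Segel system \eqref{KS3d} and generalizing \cite{HNS} from one ring to $k$ rings. We look for a solution of the form
\begin{align*}
u(r,z,t) \;=\; \sum_{j=1}^{k} U_{\lambda_j(t),\xi_j(t)}(r,z)\,\eta_j \;+\; \Phi(r,z,t),
\end{align*}
where $\xi_j(t)=(r_j(t),z_j(t))$, $U_{\lambda,\xi}(y)=\lambda^{-2}U((y-\xi)/\lambda)$ is the Liouville bubble placed at the $j$-th ring, and $\eta_j$ is a smooth cutoff supported in a small fixed neighborhood of $(r_j,z_j)$. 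The parameters $(\lambda_j,\xi_j)$ are unknown functions of $t$, to be adjusted. We further decompose $\Phi=\sum_j \phi_j^{\mathrm{in}}+\phi^{\mathrm{out}}$, where each $\phi_j^{\mathrm{in}}$ is supported in a slightly larger neighborhood of $\xi_j$ and is written in the blown-up variable $y_j=(r-r_j(t),z-z_j(t))/\lambda_j(t)$, while $\phi^{\mathrm{out}}$ is a global correction.

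\textbf{Inner and outer systems.} Substituting the ansatz into \eqref{KS3d} produces an error $E[\lambda,\xi]$ arising from the non-stationarity of the bubbles, from the curvature terms $\tfrac{1}{r}\partial_r$ and $\tfrac{1}{r}v$ proper to the axisymmetric reduction, and from the nonlocal coupling through $v=(-\Delta)^{-1}u$. Requiring that the residual cancel reduces, as in \cite{BDdPM}, to a coupled system: the inner corrections solve, to leading order,
\begin{align*}
\lambda_j^2\,\partial_t\phi_j^{\mathrm{in}} \;=\; L_0[\phi_j^{\mathrm{in}}] \;+\; \lambda_j^2\,\mathcal{E}_j[\lambda,\xi,\phi^{\mathrm{out}}],
\end{align*}
where $L_0\phi=\Delta\phi-\nabla\cdot(U\nabla\psi)-\nabla\cdot(\phi\nabla V)$ with $V=-\log(1+|y|^2)$, $\psi=(-\Delta)^{-1}\phi$, is the linearization of the planar Keller--Segel system around the bubble $U$; $\mathcal{E}_j$ gathers the localized error, the coupling with $\phi^{\mathrm{out}}$ and the interaction with the other $k-1$ bubbles (which, because the $\xi_j$ are distinct, decay as positive powers of $\lambda_j$ together with harmless smooth terms). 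Simultaneously, $\phi^{\mathrm{out}}$ satisfies a slow heat-type equation with forcing supported outside the ring cores, together with smoothed-out versions of the $\tfrac{1}{r}$-corrections.

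\textbf{Linear theory, modulation, and the rate.} The solvability of the inner problem rests on invertibility of $L_0$ modulo its kernel, spanned by the two translations $\partial_{y_1}U,\partial_{y_2}U$ and the dilation mode $\Lambda U=2U+y\cdot\nabla U$. As in the 2D construction, the obstruction against $\Lambda U$ is only marginal (the relevant pairing is logarithmically small), and removing it through orthogonality conditions yields the modulation system
\begin{align*}
\dot{\xi}_j = o(1),\qquad \dot{\lambda}_j \;\approx\; -\,\frac{c_0}{|\ln\lambda_j|}\,\lambda_j \;+\; \text{lower-order couplings},
\end{align*}
one equation per ring. Integrating these leads to the stated asymptotics $\lambda_j(t)\sim 2e^{-(\gamma+2)/2}\sqrt{T-t}\,e^{-\sqrt{|\ln(T-t)|/2}}$ and $\xi_j(t)\to(r_j,z_j)$. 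A Schauder-type fixed point over the modulation parameters, combined with a contraction in suitable weighted H\"older spaces for $(\phi_j^{\mathrm{in}},\phi^{\mathrm{out}})$, yields a genuine solution matching the announced expansion.

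\textbf{Main obstacles.} Two points require genuine work beyond \cite{HNS,BDdPM}. First, the axisymmetric curvature terms $\tfrac{1}{r}\partial_r u$ and $-\tfrac{1}{r}v=u-\Delta_{(r,z)}v$ generate, in inner coordinates, forcing of size $\lambda_j/r_j$, which at first glance is harmless but must be placed in the right Fredholm class so as not to perturb the coefficient in front of $\sqrt{|\ln(T-t)|}$ in the rate; tracking the precise $O(1)$ correction in the exponent, as in \cite{HNS}, requires a careful expansion. Second, even though the $k$ rings are geometrically well separated, the nonlocal Poisson coupling $v=(-\Delta)^{-1}u$ creates long-range interactions: each bubble sees a smooth non-decaying contribution from the others through $\nabla v$, and these have to be absorbed entirely into $\phi^{\mathrm{out}}$ with a weight that is compatible with the sharp inner control. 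Choosing the weighted norms so that the fixed-point scheme closes simultaneously for all $k$ rings is the technical heart of the proof.
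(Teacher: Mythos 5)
Your overall architecture (multi-bubble ansatz with cutoffs, inner--outer decomposition, linearization around the planar bubble, modulation plus fixed point) matches the paper's scheme. However, there is a genuine gap at the point where the blow-up rate is derived. You write the modulation equation as the local ODE $\dot{\lambda}_j \approx -c_0\lambda_j/|\ln\lambda_j|$ and claim that integrating it yields $\lambda_j(t)\sim 2e^{-(\gamma+2)/2}\sqrt{T-t}\,e^{-\sqrt{|\ln(T-t)|/2}}$. It does not: separating variables in that ODE gives $(\ln\lambda_j)^2 = 2c_0 t + C$, so $\lambda_j$ tends to a positive constant as $t\to T$ and no finite-time collapse occurs. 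The finite-time Type II rate is not produced by any local ODE of this form. In the paper the rate comes from a \emph{nonlocal-in-time} equation for $\lambda_0\dot{\lambda}_0$, schematically
\begin{align*}
-\int_{-\varepsilon(T)}^{t-\lambda_0^2}\frac{\lambda_0\dot{\lambda}_0(s)}{t-s}\,ds+(\gamma+1-\ln 4)\,\lambda_0\dot{\lambda}_0(t)+\int_{-\varepsilon(T)}^{T}\frac{\lambda_0\dot{\lambda}_0(s)}{T-s}\,ds\approx 0,
\end{align*}
whose solvability forces the introduction of the parabolic correction $\varphi_\lambda$ (Section \ref{3dphilambdaSec}), defined on an extended time interval $(-\varepsilon(T),T)$ precisely so that the memory integrals make sense, together with the mass parameter $\alpha(t)$ that enforces the zero-mass orthogonality condition. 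Your proposal contains no analogue of $\varphi_\lambda$, no extended time interval, and no mass parameter, so the constant $2e^{-(\gamma+2)/2}$ and the factor $e^{-\sqrt{|\ln(T-t)|/2}}$ are unsupported; the marginal pairing against $\Lambda U$ that you invoke only identifies the obstruction, it does not resolve it.

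A second, smaller gap concerns the two points the paper singles out as the genuinely three-dimensional work. You propose to absorb the long-range Poisson coupling "entirely into $\phi^{\mathrm{out}}$," but the paper instead needs the sharp expansion of $(-\Delta_{\mathbb{R}^3})^{-1}$ of the bubble via complete elliptic integrals (Theorem \ref{Expansionv0}), producing the explicit modulating factor $\omega_1(r,z)$ in an intermediate annulus $2\sqrt{\delta(T-t)}\le|(r,z)-\xi|\le\varepsilon$; without this intermediate-region matching the four-term barrier of the outer theory (Theorem \ref{outerSolvinTheory}) cannot be constructed, because $\nabla v_0$ is neither $\nabla\Gamma_0$ nor merely bounded there. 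Identifying the curvature terms as forcing of size $\lambda_j/r_j$ to be "placed in the right Fredholm class" is a reasonable heuristic, but it does not substitute for this quantitative expansion, which is exactly where the $O(1)$ corrections feeding into the nonlocal rate equation are computed.
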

We observe that our result extends the construction in \cite{HNS} to multiple rings. Moreover we are able to provide the refined asymptotics of the rate of the blow-up that is consistent with the two dimensional constructions in \cite{
	CGMN1, CGMN2, BDdPM}. \newline
The construction here differs from the one in \cite{BDdPM} in two main technical aspects: 
\begin{itemize}
	\item In Section \ref{ChemoExSec}, we derive a highly precise asymptotic expansion for the three-dimensional axially symmetric inverse Laplacian of $U$, relying on asymptotic expansions of certain elliptic integrals. This expansion is expected to be of relevance in other contexts where it plays a central role, such as the study of leapfrogging vortex rings in the setting of the Euler equations; see \cite{DdPMW1};
	\item In Section \ref{outerTheorySec}, where we develop what is commonly referred to in the gluing procedure as the \emph{outer theory}, we introduce an intermediate region in our estimates to handle the transition from an essentially two dimensional problem to the full three dimensional equation. In this context, it is crucial to have precise control of the three dimensional inverse Laplacian of $U$, as established in Section \ref{ChemoEx}.		
\end{itemize}
The structure of the paper naturally follows the framework developed in \cite{BDdPM}. In Section \ref{ChemoExSec}, we construct the first approximation, presenting classical expansions of elliptic integrals and deriving the precise asymptotic behavior of the three-dimensional axially symmetric inverse Laplacian of $U$. In Section \ref{3dphilambdaSec}, we introduce the second term of the expansion, following the approach in \cite{BDdPM}, with the distinction that here the approximation is considered only locally. Section \ref{innerouterSec} is devoted to the introduction of the \emph{inner-outer} gluing scheme. We briefly recall the inner theories from \cite{BDdPM}, which are essential to solving the inner problem in appropriately chosen normed spaces. In Section \ref{outerTheorySec}, we develop the revisited outer theory required for our analysis. Finally, in Section \ref{proofThm1Sec}, we implement a fixed point argument to complete the proof of the main theorem.

\subsection*{Acknowledgements.} The work of F.~Buseghin, which began during his PhD studies, is part of the ERC Starting Grant project FloWAS that has received funding from the European Research Council (ERC) under the European Union's Horizon 2020 research and innovation programme (Grant Agreement No. 101117820).  
J.~D\'avila has been supported  by  a Royal Society Wolfson Fellowship, UK, Grant RSWF/FT/191007. 
M.~del Pino has been supported by the Royal Society Research Professorship grant RP-R1-180114 and by the ERC/UKRI Horizon Europe grant ASYMEVOL, EP/Z000394/1.

\section{The first approximation $u_{0}$}\label{ChemoExSec}
Since our gluing scheme naturally extends to the multi-ring case with only minor technical adjustments, we choose to keep the exposition concise and defer the details to Section \ref{multiringsproof}.
We now focus exclusively on the scenario involving a single ring. In this section, we aim to construct an initial, basic approximation of a solution to the three dimensional, axially symmetric Keller-Segel system. Consider the parameter functions
\begin{align*}
	0<\lambda(t)\to 0, \quad \xi(t):=(q_{1}(t),0)\to \xi(T):= (r^{\star},0) \in \mathbb{R}^{+}\times \mathbb{R}, \quad \alpha(t)\to 1 \quad \text{as } t\to T
\end{align*}
which we will specify in more detail later. We anticipate that the parameter $T>0$ plays a crucial role in ensuring the success of our argument. Indeed, throughout the construction of various barriers and in the final step of proving the main theorem, that will be established via a fixed point argument, it is essential to choose $T$ sufficiently small.
We also define a smooth cut-off function $\chi$, where
\[
\chi(r,z,t)=\chi_{0}\left(\frac{|(r,z)-\xi(t)|}{\sqrt{\delta(T-t)}}\right),
\]
with $\chi_{0} : \mathbb{R} \to \mathbb{R} $ being a smooth function satisfying
\begin{align}\label{cutoff}
	\chi_{0}(s)=
	\begin{cases}
		1 & \text{if } s\le 1,\\
		0 & \text{if } s\ge 2.
	\end{cases}
\end{align}
Here, $\delta$ is a small positive constant which will be fixed later in Section~\ref{reviewInnerTheories}. Now, define the functions
\begin{align*}
	U(y)=\frac{8}{(1+|y|^{2})^{2}}, \quad \Gamma_{0}(y)=\log U(y),
\end{align*}
noting that this pair satisfies the steady-state system
\begin{align*}
	\nabla \cdot ( \nabla U - U \nabla \Gamma_{0})=0, \quad \Delta \Gamma_{0}=U.
\end{align*}
The approximate solution is given by
\begin{equation}\label{def-u0-v0}
\begin{aligned}
	u_{0}(r,z,t)=\frac{\alpha(t)}{\lambda^{2}}U\left(\frac{(r,z)-\xi(t)}{\lambda}\right)\chi_{0}\left(\frac{|(r,z)-\xi(t)|}{\sqrt{\delta(T-t)}}\right), \quad 
	v_{0}(r,z,t)=(-\Delta_{\mathbb{R}^{3}})^{-1}u_{0}(r,z,t).
\end{aligned}
\end{equation}
Next, we define the error operator
\begin{align}\label{erroroperator}
	S(u)=-\partial_{t}u+\Delta u- \nabla \cdot(u\nabla v), \quad \text{where } v=(-\Delta_{\mathbb{R}^{3}})^{-1}u.
\end{align}
In this context, we can decompose
\begin{align*}
	S(u_{0}) = S_{(2d)}(u_{0}) + \frac{1}{r}\partial_{r}u_{0},
\end{align*}
where the two dimensional component is given by
\begin{align}\label{S2dERR}
	S_{(2d)}(u_{0}) &= -\frac{\dot{\alpha}}{\lambda^{2}}U\chi + \alpha\frac{\dot{\lambda}}{\lambda^{3}}Z_{0}\chi + \frac{\alpha}{\lambda^{3}}\dot{\xi} \cdot \nabla_{y}U\chi \nonumber\\
	&\quad -\frac{\alpha}{\lambda^{2}}U\partial_{t}\chi +\frac{2\alpha}{\lambda^{3}}\nabla_{(r,z)}\chi_ \cdot \nabla_{y}U + \frac{\alpha}{\lambda^{2}}U\Delta_{(r,z)}\chi \nonumber\\
	&\quad - \frac{1}{\lambda^{2}}U\nabla_{(r,z)}\chi \cdot \nabla_{(r,z)}v_{0}  - \frac{\alpha(\alpha-1)}{\lambda^{4}}\nabla_{y} \cdot (U \nabla_{y}\Gamma_{0}) \chi\nonumber\\
	&\quad + \frac{\alpha }{\lambda^{4}}\big[\alpha(\chi - 1)U^{2} - \nabla_{y}U \cdot \nabla_{y}\mathcal{R}\big]\chi- \frac{\alpha - 1}{\lambda^{2}}U(y)\nabla_{(r,z}\chi \cdot \nabla_{(r,z)}v_{0},
\end{align}
with the variable $y$ defined as $y = \frac{(r,z)-\xi(t)}{\lambda}$ and where we used the notations
\begin{align}\label{Rdefinition} 
Z_{0}(y)=2U(y)+y\cdot \nabla_{y}U(y),  \quad \mathcal{R}(r,z) = v_{0} - \alpha \Gamma_{0}.
\end{align}
It is important to note that although the operator $S_{(2d)}$ appears to mirror the two dimensional case treated in \cite{BDdPM}, a significant distinction arises in the nonlocal terms, as in our setting we have
$v_{0}=(-\Delta_{\mathbb{R}^{3}})^{-1}u_{0}$.
This preliminary analysis highlights the critical importance of obtaining accurate expansions for the difference $\mathcal{R}(r,z)$.
This section is dedicated to derive a precise expansion for the solution, given by the Newtonian potential, of the following problem: 
\begin{align*}
	- \Delta_{\mathbb{R}^{3}} v_{0} = -\Delta_{r,z} v_{0} - \frac{1}{r} \partial_{r} v_{0} = \frac{\alpha(t)}{\lambda^{2}(t)} U \left( \frac{r - q_{1}}{\lambda(t)}, \frac{z}{\lambda(t)} \right) \chi_{0} \left( \frac{|(r - q_{1}, z)|}{\delta \sqrt{T - t}} \right), \quad (r,z) \in \mathbb{R}^{+}\times \mathbb{R}.
\end{align*}
In the rest of the paper, we will assume the estimate:
\begin{align} \label{EstiLambda}
	c_{1} \sqrt{T - t} \, e^{- \sqrt{\frac{|\ln(T - t)|}{2}}} \leq |\lambda(t)| \leq c_{2} \sqrt{T - t} \, e^{- \sqrt{\frac{|\ln(T - t)|}{2}}}, \quad \text{for some } c_{1}, c_{2} > 0
\end{align}
and the following bounds:
\begin{align} \label{EstiPar}
	\begin{cases}
		|\lambda \dot{\lambda}(t)| + (T - t) \sqrt{|\ln(T - t)|} \left| \frac{d}{dt}(\lambda \dot{\lambda})(t) \right| \lesssim e^{- \sqrt{2|\ln(T - t)|}}, \\[7pt]
		|\dot{\xi}| \lesssim \dfrac{e^{- \left(\frac{3}{2} + \gamma_{1} \right) \sqrt{2|\ln(T - t)|}}}{\sqrt{T - t}}, \quad \text{for some } \gamma_{1} > 0, \\[7pt]
		|\dot{\alpha}| \lesssim\dfrac{e^{- \left(\frac{3}{2} - \gamma_{2} \right) \sqrt{2|\ln(T - t)|}}}{T - t}, \quad \text{for any } \gamma_{2} > 0.
	\end{cases}
\end{align}
All these estimates will be rigorously proved later.
While a rigorous result will be stated in Section~\ref{ChemoEx}, we preview the main goal of this section, which is to establish that for some small $\varepsilon>0$ 
\begin{align*}
	\nabla_{(r,z)}v_{0}(r,z,t)=
	\begin{cases}
		\nabla_{(r,z)} \Gamma_0(r,z) +O(1) & \text{if } |(r,z) - \xi| \leq 2\sqrt{\delta (T - t)}, \\[5pt]
		\omega_1(r,z,t)\, \nabla_{(r,z)} \Gamma_0(r,z) +O(|\ln(|(r,z)-\xi)|) & \text{if } 2\sqrt{\delta (T - t)} \leq |(r,z) - \xi| \leq \varepsilon, \\[5pt]
	\end{cases}
\end{align*}
where $\omega_{1}$ is an explicit function satisfying $	|\omega_{1}(r,z) - 1| = O\left( \sqrt{(r - q_{1})^{2} + z^{2}} \right)$ $\text{as } |(r,z) - \xi| \to 0$.
To accomplish this, we will first review several classical results regarding the inverse Laplacian and elliptic integrals in Section~\ref{EllipticIntegrals}, which will then be utilized in Section~\ref{ChemoEx} to derive the desired expansion.
\subsection{Essential background on the inverse Laplacian and elliptic integrals}\label{EllipticIntegrals}
The first Lemma follows directly from a straightforward estimate of the inverse Laplacian when the right-hand side has compact support, and therefore we omit its proof.
\begin{lemma}\label{Pb310Salsa3D}
	Let $f\in C_{0}(\mathbb{R}^{3})$ and
	\begin{align*}
		u(x)=\frac{1}{4\pi}\int_{\mathbb{R}^{3}}\frac{1}{|x-y|} f(y)dy.
	\end{align*}
	We have 
	\begin{align}\label{expansionInvInf}
		u(x)=\frac{M}{4\pi}\frac{1}{|x|}+O(\|f\|_{L^{1}}|x|^{-2}) \ \ \ |x|\to \infty,
	\end{align}
	\begin{align}\label{expansionInvInfGrad}
		\nabla u(x)=-\frac{M}{4\pi}\frac{x}{|x|^{3}}+O(\|f\|_{L^{1}}|x|^{-3}) \ \ \ |x|\to \infty
	\end{align}
	where $M=\int_{\mathbb{R}^{3}}f(x)dx$. 	In particular, when $f(x)$ is axially symmetric we get the following expansions for $u$:
	\begin{align*}
		u(r,z)=\frac{M}{4\pi}\frac{1}{|(r,z)|}+O(\|f\|_{L^{1}}|(r,z)|^{-2}) \ \ \ |(r,z)|\to \infty,
	\end{align*}
	\begin{align*}
		\nabla_{(r,z)} u(r,z)=-\frac{M}{4\pi}\frac{(r,z)}{|(r,z)|^{3}}+O(\|f\|_{L^{1}}|(r,z)|^{-3}) \ \ \ |(r,z)|\to \infty.
	\end{align*}
\end{lemma}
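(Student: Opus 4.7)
The plan is a direct multipole expansion of the Newton kernel, exploiting the compact support of $f$. Since $f\in C_0(\mathbb{R}^3)$, fix $R>0$ with $\operatorname{supp}(f)\subset B_R(0)$. For $|x|\geq 2R$ and any $y\in\operatorname{supp}(f)$ we have $|y|\leq R\leq |x|/2$, so $|x-y|\geq |x|/2$, and the elementary estimate
\[
\left|\frac{1}{|x-y|}-\frac{1}{|x|}\right|
= \frac{\bigl||x|-|x-y|\bigr|}{|x|\,|x-y|}
\leq \frac{|y|}{|x|\,|x-y|}
\leq \frac{2R}{|x|^{2}}
\]
holds uniformly in $y\in\operatorname{supp}(f)$. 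Multiplying by $|f(y)|$ and integrating yields
\[
\left|u(x)-\frac{M}{4\pi|x|}\right| \leq \frac{R}{2\pi\,|x|^{2}}\,\|f\|_{L^{1}},
\]
which is precisely \eqref{expansionInvInf}.

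For the gradient I would differentiate the kernel under the integral sign and repeat the same scheme. Since $\nabla_x\tfrac{1}{|x-y|}=-\tfrac{x-y}{|x-y|^{3}}$, the natural approach is to write
\[
\frac{x-y}{|x-y|^{3}}-\frac{x}{|x|^{3}}
= \int_{0}^{1}\partial_s\!\left(\frac{x-sy}{|x-sy|^{3}}\right)ds,
\]
and to estimate the integrand by $C|y|/|x|^{3}$ for $|x|\geq 2R$, $|y|\leq R$, using $|x-sy|\geq |x|/2$ for $s\in[0,1]$. Integrating against $f$ then yields \eqref{expansionInvInfGrad}.

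The axially symmetric statements follow as immediate corollaries. If $f$ is axially symmetric, then so is $u$, and writing $x=(r\cos\theta,r\sin\theta,z)$ one has $|x|=\sqrt{r^{2}+z^{2}}=|(r,z)|$ and the projection of $x/|x|^{3}$ onto the $(r,z)$ plane equals $(r,z)/|(r,z)|^{3}$, so the two general expansions specialize to the stated ones.

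The argument is entirely standard and presents no genuine obstacle; the only point requiring minor care is to track that the constants absorbed into the $O(\cdot)$ symbols depend on $R=\operatorname{diam}(\operatorname{supp} f)$ but are independent of $x$ in the regime $|x|\geq 2R$. This is consistent with the authors' decision to omit the proof.
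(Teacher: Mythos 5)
Your proof is correct and is exactly the standard multipole-expansion argument the authors have in mind (the paper omits the proof, describing it as a straightforward estimate for compactly supported data). The two kernel estimates, the justification for differentiating under the integral (the singularity is never reached for $|x|\geq 2R$), and the reduction of the axisymmetric case are all sound, and you correctly flag that the implicit constants depend on the support radius $R$.
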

\begin{proof}
	Omitted.
\end{proof}
\begin{lemma}\label{UniquenessInfInv}
	Let $f \in C_{0}(\mathbb{R}^{2})$. Then the Newtonian potential in $\mathbb{R}^{3}$ defines the unique solution $u$ to the equation $-\Delta u = f$ in $\mathbb{R}^{3}$ that decays.
\end{lemma}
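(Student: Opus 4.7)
The plan is to split the statement into existence and uniqueness. Existence is immediate from Lemma \ref{Pb310Salsa3D}: setting
\[
u(x) \;=\; \frac{1}{4\pi}\int_{\R^3}\frac{f(y)}{|x-y|}\,dy,
\]
the compact support and continuity of $f$ ensure that $u \in C^2(\R^3)$ and that $-\Delta u = f$ in the classical sense, while \equ{expansionInvInf} gives the decay $u(x)=O(|x|^{-1})$ as $|x|\to\infty$.

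For uniqueness, I would take two decaying classical solutions $u_1,u_2$ of $-\Delta u = f$ in $\R^3$ and consider their difference $w = u_1 - u_2$. Then $w$ is harmonic on all of $\R^3$ and satisfies $w(x)\to 0$ as $|x|\to\infty$. In particular $w$ is bounded, so Liouville's theorem for harmonic functions on $\R^3$ implies that $w$ is constant, and the decay at infinity forces this constant to vanish, i.e.\ $u_1 \equiv u_2$.

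There is no real obstacle here; the statement is essentially a bookkeeping lemma that fixes, for later use in the paper, which $v_0$ one means in \eqref{def-u0-v0} when writing $v_0 = (-\Delta_{\R^3})^{-1}u_0$. The only subtlety to mention is what one means by ``decays'': it suffices that $w(x)\to 0$ along sequences going to infinity (equivalently, $\liminf_{|x|\to\infty}|w(x)|=0$), which is the weakest version still compatible with the classical Liouville argument via the mean-value property, and it is automatic for any solution produced from a compactly supported source via the Newtonian potential.
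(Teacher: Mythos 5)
Your main argument is correct and is the standard one; the paper omits its own proof, so there is nothing to compare against, and existence via the Newtonian potential plus uniqueness via Liouville applied to the harmonic difference is exactly what is implicitly intended. (The hypothesis $f\in C_{0}(\mathbb{R}^{2})$ in the statement is evidently a typo for $C_{0}(\mathbb{R}^{3})$; for a merely continuous compactly supported $f$ the potential is only guaranteed to be $C^{1,\alpha}$ and solves the equation distributionally, but this does not affect the uniqueness argument, which only needs the difference of two solutions to be harmonic.)

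One point in your closing remark is wrong, however: it is \emph{not} sufficient that $\liminf_{|x|\to\infty}|w(x)|=0$. The harmonic function $w(x)=x_{1}$ on $\mathbb{R}^{3}$ vanishes along the unbounded set $\{x_{1}=0\}$, so $\liminf_{|x|\to\infty}|w(x)|=0$, yet $w$ is neither bounded nor constant. Your Liouville argument genuinely requires $w$ to be bounded (which follows from $w(x)\to 0$ as $|x|\to\infty$ together with continuity on a large closed ball), and then the full limit $w(x)\to 0$, not just a $\liminf$, to conclude that the constant is zero. Since the rest of the paper only ever invokes this lemma for potentials that decay like $|x|^{-1}$, the correct hypothesis ``$u(x)\to 0$ as $|x|\to\infty$'' is the one you should keep; the proposed weakening should be deleted.
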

\begin{proof}
	Omitted.
\end{proof}
The preceding two lemmas enable us to establish the following corollary.
\begin{corollary}\label{UniqueDecayCoro}
	Let $f \in C_{0}(\mathbb{R}^{3})$, and let $u$ be a solution to $-\Delta u = f$ in $\mathbb{R}^{3}$ that decays. Then $u$ satisfies \eqref{expansionInvInf} and \eqref{expansionInvInfGrad}.
\end{corollary}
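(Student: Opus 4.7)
The plan is to combine Lemma \ref{Pb310Salsa3D} and Lemma \ref{UniquenessInfInv} directly. Since $f\in C_0(\mathbb{R}^3)$, I would first consider the Newtonian potential
\[
\tilde u(x) := \frac{1}{4\pi}\int_{\mathbb{R}^3}\frac{1}{|x-y|}f(y)\,dy,
\]
which by the expansion \eqref{expansionInvInf} in Lemma \ref{Pb310Salsa3D} decays as $|x|\to\infty$, and is a classical solution of $-\Delta \tilde u=f$ in $\mathbb{R}^3$ (standard Newtonian potential theory for continuous, compactly supported data).

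Next, given any decaying solution $u$ of $-\Delta u=f$, the difference $w:=u-\tilde u$ satisfies $-\Delta w=0$ in $\mathbb{R}^3$ and decays at infinity. By Lemma \ref{UniquenessInfInv} (the uniqueness of decaying solutions to the Poisson equation in $\mathbb{R}^3$), this forces $w\equiv 0$, hence $u=\tilde u$.

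Finally, since $u=\tilde u$ is the Newtonian potential, the asymptotic expansions \eqref{expansionInvInf} and \eqref{expansionInvInfGrad} from Lemma \ref{Pb310Salsa3D} apply verbatim to $u$, giving the claim.

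There is no genuine obstacle here: the statement is a purely formal consequence of existence plus uniqueness among decaying solutions. The only mild point to be careful about is verifying that the hypotheses of Lemma \ref{UniquenessInfInv} are really met by the difference $w=u-\tilde u$, i.e.\ that both $u$ (by hypothesis) and $\tilde u$ (from the quantitative decay $O(|x|^{-1})$ in \eqref{expansionInvInf}) decay at infinity, so that the harmonic function $w$ is itself decaying and therefore identically zero.
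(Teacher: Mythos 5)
Your proposal is correct and follows exactly the route the paper intends: the paper's proof is the one-line remark that the corollary is ``a direct consequence of Lemmas \ref{Pb310Salsa3D} and \ref{UniquenessInfInv},'' and your argument (Newtonian potential exists and decays, uniqueness among decaying solutions forces $u=\tilde u$, then transfer the expansions) is precisely the intended expansion of that remark.
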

\begin{proof}
	This is a direct consequence of Lemmas \ref{Pb310Salsa3D} and \ref{UniquenessInfInv}.
\end{proof}
The following result enables us to express the inverse Laplacian of axially symmetric functions in terms of the \emph{complete elliptic integral of the first kind}. This reformulation will be crucial for deriving precise asymptotic behavior.  
\begin{lemma}\label{ReprLemmaAS}
	Let $f(x)$ be a continuous and compactly supported axially symmetric function,
    $$
    f(x) = f(r, z), \quad x= (r e^{i\theta} , z).
    $$
    Then we have
	\begin{align*}
		(-\Delta_{\mathbb{R}^{3}})f(x)=\frac{1}{\pi}\int_{\mathbb{R}}\int_{0}^{\infty}\frac{f(\rho,w)\rho}{\sqrt{(z-w)^{2}+(r+\rho)^{2}}}K\left(\sqrt{\frac{4r\rho}{(z-w)^{2}+(r+\rho)^{2}}}\right)d\rho\,dw, \quad x=(r e^{i\theta} , z)
	\end{align*}
	where $K(k)$ denotes the \emph{complete elliptic integral of the first kind} (see (110.06) in \cite{BF}), defined by
	\begin{align}\label{ComIntFirKind}
		K(k)=\int_{0}^{\pi/2}\frac{d\gamma}{\sqrt{1-k^{2}\sin^{2}\gamma}}, \quad 0 \leq k < 1.
	\end{align}
\end{lemma}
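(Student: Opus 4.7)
The plan is to unpack the convention introduced with system \eqref{KS}, under which the notation $(-\Delta_{\mathbb{R}^3})f$ stands for convolution with the three dimensional Newtonian kernel,
\[
(-\Delta_{\mathbb{R}^3})f(x) \;=\; \frac{1}{4\pi}\int_{\mathbb{R}^3}\frac{f(y)}{|x-y|}\,dy,
\]
and then to carry out a direct cylindrical change of variables. Writing $y=(\rho\cos\phi,\rho\sin\phi,w)$ with Jacobian $\rho\,d\rho\,d\phi\,dw$, and using axial symmetry to evaluate at the representative point $x=(r,0,z)$, the first step is to compute
\[
|x-y|^2 \;=\; (r-\rho\cos\phi)^2 + \rho^2\sin^2\phi + (z-w)^2 \;=\; (r+\rho)^2 + (z-w)^2 - 2r\rho\bigl(1+\cos\phi\bigr).
\]

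The decisive step is the reduction of the azimuthal integral $\int_0^{2\pi} d\phi/|x-y|$ to the standard form \eqref{ComIntFirKind} of the complete elliptic integral of the first kind. For this I would apply the half-angle identity $1+\cos\phi = 2\cos^2(\phi/2)$ and then substitute $\phi = \pi - 2\gamma$, so that $\cos(\phi/2)=\sin\gamma$; invoking $2\pi$-periodicity and evenness in $\gamma$, the integral $\int_0^{2\pi} d\phi$ folds into $4\int_0^{\pi/2} d\gamma$. Factoring $\sqrt{(r+\rho)^2+(z-w)^2}$ out of the remaining radical then yields
\[
\int_0^{2\pi}\!\frac{d\phi}{|x-y|} \;=\; \frac{4}{\sqrt{(r+\rho)^2+(z-w)^2}}\, K\!\left(\sqrt{\frac{4r\rho}{(r+\rho)^2+(z-w)^2}}\right).
\]
Inserting this into the volume integral, the prefactor $\tfrac{1}{4\pi}$ combines with the factor $4$ to produce exactly the constant $\tfrac{1}{\pi}$ in the statement, while the Jacobian $\rho$ reproduces the weight appearing on the right-hand side.

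There is no substantive obstacle beyond executing this computation cleanly; the only minor checkpoint is convergence of the resulting double integral. The elliptic modulus satisfies $k<1$ off the diagonal, since $(r+\rho)^2+(z-w)^2 - 4r\rho = (r-\rho)^2 + (z-w)^2 > 0$ whenever $(r,z)\neq(\rho,w)$, while on the diagonal one encounters the classical logarithmic blow-up $K(k) \sim \log\tfrac{4}{\sqrt{1-k^2}}$, which is locally integrable in the $(\rho,w)$ plane. Continuity and compact support of $f$ then justify Fubini throughout the computation, completing the plan.
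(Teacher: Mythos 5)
Your proposal is correct, and at the decisive step it takes a genuinely different (and cleaner) route than the paper. The paper factors the azimuthal integrand by $\sqrt{(z-w)^{2}+r^{2}+\rho^{2}}$, which after the duplication substitution $\gamma=2\theta$ leaves a radical of the form $\sqrt{(1-m)+2m\sin^{2}\theta}$ with a \emph{positive} coefficient of $\sin^{2}\theta$; to reach the Legendre normal form it must then invoke the imaginary modulus transformation $F(\phi,ik)=k_1'F(\beta,k_1)$ from Byrd--Friedman, and finally verify the identities $1+m=\frac{(z-w)^{2}+(r+\rho)^{2}}{(z-w)^{2}+r^{2}+\rho^{2}}$ and $\frac{2m}{1+m}=\frac{4r\rho}{(z-w)^{2}+(r+\rho)^{2}}$. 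You instead complete the square against $(r+\rho)^{2}+(z-w)^{2}$ from the outset, use $1+\cos\phi=2\cos^{2}(\phi/2)$ and the shift $\phi=\pi-2\gamma$, and land directly on $\bigl[(r+\rho)^{2}+(z-w)^{2}\bigr]\bigl(1-k^{2}\sin^{2}\gamma\bigr)$ with $k^{2}=\frac{4r\rho}{(r+\rho)^{2}+(z-w)^{2}}$, so the standard definition of $K$ applies immediately with no special-function identity needed. Your folding of $\int_{0}^{2\pi}$ into $4\int_{0}^{\pi/2}$ and the bookkeeping $\tfrac{1}{4\pi}\cdot 4=\tfrac{1}{\pi}$ are both right, and your convergence remark (the modulus satisfies $k<1$ off the diagonal since $(r+\rho)^{2}+(z-w)^{2}-4r\rho=(r-\rho)^{2}+(z-w)^{2}$, with only a locally integrable logarithmic singularity of $K$ on the diagonal) is a point the paper leaves implicit. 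The one cosmetic issue is that you should flag, as the paper's proof implicitly does, that the operator written $(-\Delta_{\mathbb{R}^{3}})$ in the statement is really the inverse Laplacian given by the Newtonian potential; you do interpret it that way, consistently with the convention of \eqref{KS}.
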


\begin{proof}
	Assume $x=(re^{i\theta},z)$ and $y=(\rho e^{i\beta},w)$. Then we have
	\begin{align*}
		|x-y|^{2} = r^{2} + \rho^{2} - 2r\rho \cos(\beta - \theta) + (z-w)^{2}.
	\end{align*}
	Due to the symmetry of the integrand, it is straightforward to verify that
	\begin{align*}
		v(r,z) &= \frac{1}{4\pi} \int_{\mathbb{R}^{3}}\frac{f(y)}{|x - y|}dy \\
		&= \frac{1}{4\pi} \int_{\mathbb{R}}\int_{0}^{\infty}\int_{0}^{2\pi} \frac{f(\rho,w)\rho}{\sqrt{(z-w)^{2} + r^{2} + \rho^{2} - 2r\rho \cos\gamma}}\,d\gamma\,d\rho\,dw \\
		&= \frac{1}{4\pi} \int_{\mathbb{R}}\int_{0}^{\infty} \frac{f(\rho,w)\rho}{\sqrt{(z-w)^{2} + r^{2} + \rho^{2}}} \int_{0}^{2\pi} \frac{d\gamma}{\sqrt{1 - \frac{2r\rho}{(z-w)^{2} + r^{2} + \rho^{2}}\cos\gamma}}\,d\rho\,dw \\
		&= \frac{1}{4\pi} \int_{\mathbb{R}}\int_{0}^{\infty} \frac{f(\rho,w)\rho}{\sqrt{(z-w)^{2} + r^{2} + \rho^{2}}}I(\rho,w)\,d\rho\,dw.
	\end{align*}
	We aim to rewrite $I(\rho,w)$ in terms of the function defined in \eqref{ComIntFirKind}. Let $m := \frac{2r\rho}{(z-w)^{2} + r^{2} + \rho^{2}}$. Then
	\begin{align*}
		I(\rho,w) &= \int_{0}^{2\pi} \frac{d\gamma}{\sqrt{1 - m\cos\gamma}} = 2 \int_{0}^{\pi} \frac{d\gamma}{\sqrt{1 - m\cos\gamma}} \\
		&= 4 \int_{0}^{\pi/2} \frac{d\theta}{\sqrt{1 - m(1 - 2\sin^{2}\theta)}} = 4 \int_{0}^{\pi/2} \frac{d\theta}{\sqrt{(1 - m) + 2m \sin^{2}\theta}} \\
		&= \frac{4}{\sqrt{1 - m}} \int_{0}^{\pi/2} \frac{d\theta}{\sqrt{1 + \frac{2m}{1 - m} \sin^{2}\theta}},
	\end{align*}
	where in the third equality we used the change of variable $\gamma = 2\theta$ and the identity $\cos(2\theta) = 1 - 2\sin^{2}\theta$.
	Since $0 \le m < 1$, the coefficient $\frac{2m}{1 - m}$ is positive. To match the definition of $K(k)$, we now apply the \emph{imaginary modulus transformation}. For this, recall the generalized elliptic integral (see (110.02) in \cite{BF}):
	\begin{align*}
		F(\phi,k) := \int_{0}^{\phi} \frac{d\gamma}{\sqrt{1 - k^{2}\sin^{2}\gamma}}.
	\end{align*}
	This is known as the \emph{incomplete elliptic integral of the first kind}. In particular, $F(\pi/2, k) = K(k)$. The imaginary modulus transformation (see (160.02) in \cite{BF}) gives:
	\begin{align*}
		F(\phi, ik) = k_1' F(\beta, k_1), \quad k_1 = \frac{k}{\sqrt{1 + k^{2}}}, \quad k_1' = \sqrt{1 - k_1^{2}}, \quad \beta = \arcsin\left(\frac{\sqrt{1 + k^{2}}}{\sqrt{1 + k^{2}\sin^{2}\phi}} \sin\phi\right).
	\end{align*}
	In particular, for $\phi = \pi/2$, we get
	\begin{align*}
		F(\pi/2, ik) = \frac{1}{\sqrt{1 + k^{2}}} F\left(\frac{\pi}{2}, \frac{k}{\sqrt{1 + k^{2}}}\right) = \frac{1}{\sqrt{1 + k^{2}}} K\left(\frac{k}{\sqrt{1 + k^{2}}}\right).
	\end{align*}
	Thus, we obtain
	\begin{align*}
		\int_{0}^{\pi/2} \frac{d\theta}{\sqrt{1 + a\sin^{2}\theta}} = F\left(\frac{\pi}{2}, i\sqrt{a}\right) = \frac{1}{\sqrt{1 + a}} K\left(\sqrt{\frac{a}{1 + a}}\right).
	\end{align*}
	In our case, $a = \frac{2m}{1 - m}$, so
	\begin{align*}
		I(\rho,w) = \frac{4}{\sqrt{1 - m}}  \frac{1}{\sqrt{1 + \frac{2m}{1 - m}}} K\left(\sqrt{\frac{\frac{2m}{1 - m}}{1 + \frac{2m}{1 - m}}}\right) = \frac{4}{\sqrt{1 + m}} K\left(\sqrt{\frac{2m}{1 + m}}\right).
	\end{align*}
	Since $m = \frac{2r\rho}{(z - w)^{2} + r^{2} + \rho^{2}}$, we compute:
	\begin{align*}
		1 + m = \frac{(z - w)^{2} + (r + \rho)^{2}}{(z - w)^{2} + r^{2} + \rho^{2}}, \quad \frac{2m}{1 + m} = \frac{4r\rho}{(z - w)^{2} + (r + \rho)^{2}},
	\end{align*}
	and therefore,
	\begin{align*}
		I(\rho,w) = 4\sqrt{\frac{(z - w)^{2} + r^{2} + \rho^{2}}{(z - w)^{2} + (r + \rho)^{2}}}K\left(\sqrt{\frac{4r\rho}{(z - w)^{2} + (r + \rho)^{2}}}\right),
	\end{align*}
	which completes the proof.
\end{proof}

In the remark below, we recall the expansion (900.06) from \cite{BF} for the function $K(x)$ near the point $x = 1$. For additional details, see also \cite{Ka,Ve,KaSi}.
\begin{remark}
	Let $0 < x < 1$. Then,
	\begin{align}\label{ExpEllipticIntegral}
		K(x) = \sum_{m=0}^{\infty} \frac{(\frac{1}{2})_{m} (\frac{1}{2})_{m}}{m! \, m!} (1 - x^2)^m \left( \ln \left( \frac{1}{\sqrt{1 - x^2}} \right) + d(m) \right),
	\end{align}
	where $(\frac{1}{2})_{m}$ denotes the Pochhammer symbol, given by $\frac{(2m)!}{4^m m!}$, and the sequence $d(m)$ satisfies the recurrence relation
	\[
	d(m+1) = d(m) - \frac{2}{(2m+1)(2m+2)}, \quad \text{with } d(0) = 2 \ln 2.
	\]
\end{remark}
\begin{proposition}
	For $0 < 1 - x^2 \ll 1$, the function $K(x)$ admits the following asymptotic expansion:
	\begin{align}\label{ExpansionK1}
		K(x) = \ln \left( \frac{4}{\sqrt{1 - x^2}} \right) + \frac{1}{4} (1 - x^2) \left[ \ln \left( \frac{4}{\sqrt{1 - x^2}} \right) - \frac{1}{6} \right] + O\left( |1 - x^2|^2 \, |\ln(1 - x^2)| \right).
	\end{align}
\end{proposition}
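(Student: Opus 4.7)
The plan is to prove the expansion by truncating the convergent series representation \eqref{ExpEllipticIntegral} after the first two terms and carefully bounding the tail sum over $m \geq 2$. The entire argument is essentially an exercise in reading off the first two Taylor-type coefficients and controlling a geometric-like remainder, so no delicate asymptotic machinery beyond \eqref{ExpEllipticIntegral} is required.

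First I would isolate the $m=0$ contribution. Since $(1/2)_0=1$ and $d(0)=2\ln 2$, this term reduces to
\[
\ln\!\left(\tfrac{1}{\sqrt{1-x^2}}\right) + 2\ln 2 \;=\; \ln\!\left(\tfrac{4}{\sqrt{1-x^2}}\right),
\]
which produces the leading logarithmic singularity. For $m=1$, the Pochhammer factor is $((1/2)_1/1!)^2 = 1/4$, and the recurrence $d(m+1)=d(m)-2/((2m+1)(2m+2))$ gives an explicit value of $d(1)$; rewriting the bracket as $\ln(4/\sqrt{1-x^2}) + (d(1)-2\ln 2)$ puts the subleading term in the claimed form with an explicit numerical constant.

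The main (and only genuinely nontrivial) step is to bound the tail $\sum_{m\geq 2}$ by $O(|1-x^2|^2|\ln(1-x^2)|)$. Two observations do the job: (i) the Pochhammer weights satisfy
\[
\frac{((1/2)_m)^2}{(m!)^2} \;=\; \left(\frac{\binom{2m}{m}}{4^m}\right)^{\!2} \;\sim\; \frac{1}{\pi m} \quad (m\to\infty),
\]
and are in particular uniformly bounded; (ii) the sequence $d(m)$ is uniformly bounded because the telescoping increments $|d(m+1)-d(m)|=2/((2m+1)(2m+2))$ form a summable series, so $d(m)$ converges as $m\to\infty$. Combining these, I would estimate
\[
\left|\sum_{m=2}^{\infty} \frac{((1/2)_m)^2}{(m!)^2} (1-x^2)^m \Big(\ln\tfrac{1}{\sqrt{1-x^2}} + d(m)\Big)\right| \;\lesssim\; \bigl(1+|\ln(1-x^2)|\bigr)\sum_{m=2}^{\infty} \frac{(1-x^2)^m}{m},
\]
and for $1-x^2$ sufficiently small the remaining sum is manifestly $O((1-x^2)^2)$ by comparison with a geometric series. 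This yields the stated error term and completes the expansion.

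The only mild obstacle is tracking constants carefully through the $m=1$ computation and verifying the uniform bound in (i) (handled by a direct Stirling estimate for $\binom{2m}{m}/4^m$); no analytical input beyond the explicit series \eqref{ExpEllipticIntegral} and the recurrence for $d(m)$ is needed.
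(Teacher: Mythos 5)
Your strategy is exactly the paper's: isolate the $m=0$ and $m=1$ terms of \eqref{ExpEllipticIntegral} and bound the tail $\sum_{m\ge 2}$. Your treatment of the tail is correct and in fact slightly more explicit than the paper's (the paper merely asserts that the two series in the remainder are uniformly bounded, whereas you justify this via the boundedness of $\bigl(\binom{2m}{m}/4^m\bigr)^2$ and the convergence of $d(m)$ from the summable telescoping increments); either way one gets the $O\bigl(|1-x^2|^2|\ln(1-x^2)|\bigr)$ error.

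The one step you defer, however, is exactly the step on which the proof of the \emph{stated} formula turns. The recurrence gives $d(1)=d(0)-\tfrac{2}{1\cdot 2}=2\ln 2-1$, so the $m=1$ bracket is
\[
\ln\Bigl(\tfrac{1}{\sqrt{1-x^2}}\Bigr)+d(1)=\ln\Bigl(\tfrac{4}{\sqrt{1-x^2}}\Bigr)-1,
\]
i.e.\ the subleading term is $\tfrac14(1-x^2)\bigl[\ln(4/\sqrt{1-x^2})-1\bigr]$, with constant $-1$ rather than the $-\tfrac16$ appearing in \eqref{ExpansionK1}. Your claim that the computation "puts the subleading term in the claimed form" is therefore not right as written: carried out, it produces a different constant, consistent with the classical expansion (e.g.\ Gradshteyn--Ryzhik 8.113.3) and with a numerical check at $k'=0.1$. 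The paper's own proof also stops before evaluating $d(1)$, so the discrepancy appears to be a typo in the statement (plausibly from the increment $2/(3\cdot 4)=1/6$ in the next step of the recurrence). You should either record the constant $-1$ explicitly and flag the mismatch, or verify that the value of the constant is immaterial for the later use of \eqref{ExpansionK1}; as it stands your argument proves a corrected version of the proposition, not the displayed one.
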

\begin{proof}
	Let $h(m) = \frac{\left(\frac{1}{2}\right)_{m} \left(\frac{1}{2}\right)_{m}}{(m!)^2}$. Using the expansion in \eqref{ExpEllipticIntegral}, we can express $K(x)$ as:
	\begin{align*}
		K(x) &= \sum_{m=0}^{\infty} h(m) (1 - x^2)^m \left[ \ln \left( \frac{1}{\sqrt{1 - x^2}} \right) + d(m) \right] \\
		&= h(0) \left[ \ln \left( \frac{1}{\sqrt{1 - x^2}} \right) + d(0) \right] + h(1)(1 - x^2) \left[ \ln \left( \frac{1}{\sqrt{1 - x^2}} \right) + d(1) \right] + \mathcal{E}(x),
	\end{align*}
	where the remainder term $\mathcal{E}(x)$ is given by:
	\begin{align*}
		\mathcal{E}(x) &= (1 - x^2)^2 \ln \left( \frac{1}{\sqrt{1 - x^2}} \right) \sum_{j=1}^{\infty} h(j+1)(1 - x^2)^j \\
		&\quad + (1 - x^2)^2 \sum_{j=1}^{\infty} h(j+1)(1 - x^2)^j d(j+1).
	\end{align*}
	Since both series in $\mathcal{E}(x)$ are uniformly bounded for $0 < 1 - x^2 \ll 1$, the result follows.
\end{proof}
\subsection{Expansion of the inverse Laplacian in cylindrical coordinates}\label{ChemoEx}
In the previous section, we introduced some general results. In this section, our goal is to demonstrate how these results can be employed to obtain meaningful expansions for the solution of an elliptic problem, which we will regard as a local \emph{small} perturbation of its two dimensional counterpart.
Before proceeding with the three dimensional expansion, we present a Lemma that will be useful in controlling certain error terms.
\begin{lemma}\label{Second3Dcorrection}
	Let $\varepsilon > 0$ be fixed. Suppose $T > 0$ is sufficiently small, and let $f(x,t)$ be an axially symmetric function satisfying
	\begin{align*}
		|f(x,t)| \lesssim \frac{1}{1 + \frac{|(r,z) - \xi|}{\lambda}} \, \chi_{0}\left( \frac{|(r,z) - \xi|}{\sqrt{\delta(T - t)}} \right), \quad \text{where } x = (r e^{i\theta}, z).
	\end{align*}
	Then the function
	\begin{align*}
		v(x,t) = \frac{1}{4\pi} \int_{\mathbb{R}^{3}} \frac{f(y,t)}{|x - y|} \, dy
	\end{align*}
	satisfies the following estimate:
	\begin{align*}
		\big( |(r,z) - \xi| + \lambda(t) \big)\left| \nabla_{(r,z)} v(r,z) \right| + |v(r,z)| \lesssim \lambda(t) \sqrt{T - t} \begin{cases}
			|\ln(T - t)|, & \text{if } |(r,z) - \xi| \le \varepsilon, \\
			\frac{1}{|(r,z) - \xi|}, & \text{if } |(r,z) - \xi| \ge \varepsilon.
		\end{cases}
	\end{align*}
\end{lemma}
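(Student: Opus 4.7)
\medskip
\noindent\textbf{Proof plan.} I would split the estimate into the far-field regime $d:=|(r,z)-\xi|\ge \varepsilon$, handled by elementary Newtonian potential decay, and the near-field regime $d\le \varepsilon$, where the three dimensional axisymmetric Green representation must be unfolded via Lemma~\ref{ReprLemmaAS}. As a preliminary step, axisymmetry of $f$ together with the given pointwise bound yields, after the change of variable $\rho=\lambda s$ and the computation $\int_0^R s/(1+s)\,ds\sim R$ with $R:=2\sqrt{\delta(T-t)}/\lambda\sim e^{\sqrt{|\ln(T-t)|/2}}$,
\begin{align*}
\|f\|_{L^1(\mathbb{R}^3)} = 2\pi\int_0^\infty\!\!\int_\mathbb{R} r|f(r,z,t)|\,dz\,dr \lesssim r^\star\int_0^{2\sqrt{\delta(T-t)}}\!\!\!\!\frac{\rho\,d\rho}{1+\rho/\lambda}\lesssim \lambda\sqrt{T-t}.
\end{align*}

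For the far-field case $d\ge\varepsilon$, writing $x=(r\cos\theta,r\sin\theta,z)$ and $y=(\rho\cos\beta,\rho\sin\beta,w)$ with $(\rho,w)\in\mathrm{supp}(f)$, the identity $|x-y|^2=(r-\rho)^2+2r\rho(1-\cos(\theta-\beta))+(z-w)^2$ gives $|x-y|\ge |(r,z)-(\rho,w)|\ge d-2\sqrt{\delta(T-t)}\ge d/2$ for $T$ small. Hence $|v(x)|\lesssim\|f\|_{L^1}/d$ and $|\nabla v(x)|\lesssim\|f\|_{L^1}/d^2$, and since $d+\lambda\sim d$, this produces the bound claimed in the second line of the statement.

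The near-field case $d\le\varepsilon$ is the heart of the argument. Lemma~\ref{ReprLemmaAS} expresses $v$ through $K(k)$ with $1-k^2\approx|(r,z)-(\rho,w)|^2/(4(r^\star)^2)$ for admissible $(\rho,w)$, and the expansion \eqref{ExpansionK1} combined with $\rho/\sqrt{(z-w)^2+(r+\rho)^2}\to 1/2$ reduces the estimate to that of a two dimensional logarithmic potential,
\begin{align*}
|v(r,z)|\lesssim \int\!\!\int|f(\rho,w)|\bigl(\bigl|\ln|(r,z)-(\rho,w)|\bigr|+1\bigr)\,d\rho\,dw.
\end{align*}
Setting $\eta=(\rho,w)-\xi$, $\tilde\eta=(r,z)-\xi$, I distinguish two sub-regimes. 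If $d\le 4\sqrt{\delta(T-t)}$, rescaling $\eta=\lambda s$, $\tilde\eta=\lambda\tilde s$ gives
\begin{align*}
|v|\lesssim \lambda^2\!\int_{|s|\le R}\!\!\frac{|\ln\lambda|+\bigl|\ln|s-\tilde s|\bigr|+1}{1+|s|}\,ds \lesssim \lambda^2|\ln\lambda|\,R + \lambda^2 R\ln R \lesssim \lambda\sqrt{T-t}|\ln(T-t)|,
\end{align*}
since $\int_{|s|\le R}(1+|s|)^{-1}\,ds\sim R$ in two dimensions and, under \eqref{EstiLambda}, $\lambda R\sim\sqrt{T-t}$ and $|\ln\lambda|\sim|\ln(T-t)|$. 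If instead $4\sqrt{\delta(T-t)}\le d\le\varepsilon$, then $|(r,z)-(\rho,w)|\sim d$ throughout $\mathrm{supp}(f)$, so $\bigl|\ln|(r,z)-(\rho,w)|\bigr|\lesssim|\ln(T-t)|$ and $|v|\lesssim|\ln(T-t)|\,\|f\|_{L^1}\lesssim\lambda\sqrt{T-t}|\ln(T-t)|$.

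The gradient estimate follows the same pattern. Differentiating the representation in $r$ and $z$ and performing the angular integration reduces the bound to $|\nabla v(r,z)|\lesssim\int\!\!\int|f(\rho,w)|/|(r,z)-(\rho,w)|\,d\rho\,dw$. After the same rescaling in the regime $d\le 4\sqrt{\delta(T-t)}$, the key technical point is the $\tilde s$-uniform estimate $\int_{B_R}ds/[(1+|s|)|s-\tilde s|]\lesssim \ln R\sim\sqrt{|\ln(T-t)|}$, obtained by splitting into $|u|\le|\tilde s|$ and $|u|\ge|\tilde s|$ after the shift $u=s-\tilde s$. This yields $|\nabla v|\lesssim\lambda\sqrt{|\ln(T-t)|}$, so $(d+\lambda)|\nabla v|\lesssim\sqrt{T-t}\lambda\sqrt{|\ln(T-t)|}$, absorbed by $\lambda\sqrt{T-t}|\ln(T-t)|$; the complementary regime simply gives $|\nabla v|\lesssim\|f\|_{L^1}/d$ and is easier. \textbf{The main obstacle} is precisely this two-step unfolding: first rewriting the three dimensional inverse Laplacian as a near-two-dimensional logarithmic potential through the controlled expansion of $K$ near $k=1$, and second the sharp matching of scales whereby the logarithmic factor $|\ln\lambda|\sim|\ln(T-t)|$ coming from the singularity of $K$ combines with the area factor $\lambda^2 R\sim\lambda\sqrt{T-t}$ of the rescaled source to produce exactly the claimed right-hand side, using \eqref{EstiLambda} in an essential way.
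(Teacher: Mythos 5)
Your argument is correct, and in the near-field regime it takes a genuinely different route from the paper. For $|(r,z)-\xi|\ge\varepsilon$ both proofs coincide (lower-bound the kernel on the support and use $\|f\|_{L^1}\lesssim\lambda\sqrt{T-t}$). For $|(r,z)-\xi|\le\varepsilon$, however, the paper does \emph{not} pass through the elliptic integral: it keeps the raw angular integral $\int_0^{2\pi}(\dots)\,d\gamma$, splits it at an angle $\beta(t)$, bounds the two pieces by $\lambda\sqrt{T-t}\,|\ln\beta(t)|$ and $\beta(t)|\ln\lambda|$ respectively using the elementary lower bounds $|x-y|^2\ge 2r\rho(1-\cos\gamma)$ and $|x-y|^2\ge(\rho-r)^2+(z-w)^2$, and then optimizes with $\beta(t)=\sqrt{T-t}\,\lambda(t)$; the gradient bound is dispatched by "rescaling and standard elliptic estimates." You instead invoke Lemma~\ref{ReprLemmaAS} and the expansion \eqref{ExpansionK1} of $K$ near $k=1$ to reduce $v$ to a two-dimensional logarithmic potential, and you differentiate the kernel directly (the exact angular integral $\int_0^{2\pi}\bigl(a-b\cos\gamma\bigr)^{-1}d\gamma=2\pi(a^2-b^2)^{-1/2}$ justifies your reduction of $|\nabla v|$ to $\int |f|\,|(r,z)-(\rho,w)|^{-1}$). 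Your route buys a cleaner, fully explicit gradient estimate and makes the scale-matching $\lambda^2R\,|\ln\lambda|\sim\lambda\sqrt{T-t}\,|\ln(T-t)|$ transparent, at the cost of importing the $K$-expansion machinery that the paper reserves for Theorem~\ref{Expansionv0}; the paper's angular-splitting argument keeps Lemma~\ref{Second3Dcorrection} independent of that machinery and is more robust near the diagonal, where \eqref{ExpansionK1} formally degenerates (harmless here since the logarithmic singularity is integrable, but worth a sentence in a full write-up). Two cosmetic points: your preliminary $L^1$ computation should be written as a genuine two-dimensional integral in $(\rho,w)-\xi$ rather than the one-dimensional expression you display, and in the far-field case the stated conclusion carries no logarithm, which your bound indeed delivers.
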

\begin{proof}
	Following the approach used in the proof of Lemma \ref{ReprLemmaAS}, for $x=(re^{i\theta},z)$ we can write
	\begin{align*}
		\int_{\mathbb{R}^{3}} \frac{f(y,t)}{|x-y|}dy = \int_{0}^{2\pi}\int_{0}^{\infty}\int_{\mathbb{R}} \frac{f(\rho,w)}{\sqrt{r^{2}+\rho^{2}-2r\rho \cos\gamma + (z-w)^{2}}}\rho d\rho\, dw\, d\gamma.
	\end{align*}
	First, we observe that if $|(r,z)-\xi|\ge 8\sqrt{2}\delta \sqrt{T-t}$, since in the region of integration we have $|\rho- q_1| < 2 \sqrt{\delta (T-t)}$ and $|z| < 2 \sqrt{\delta (T-t)}$, then 
	\begin{align*}
		\frac{1}{2}\big[(q_{1}-r)^{2}+z^{2}\big] &= \frac{1}{2}|(r,z)-\xi|^{2} \ge 4\sqrt{2}\delta\sqrt{T-t}|(r,z)-\xi| \\
		&\ge 4\delta\sqrt{T-t}|r-q_{1}| + 4\delta\sqrt{T-t}|z| \\
		&\ge 2|\rho-q_{1}||r-q_{1}| + 2|z||w|.
	\end{align*}
	If $T$ is sufficiently small, then $|(r,z)-\xi| \ge \varepsilon \gg 8\sqrt{2} \delta\sqrt{T-t}$. Consequently,
	\begin{align*}
		\rho^{2}+r^{2}-2r\rho \cos \gamma+(z-w)^{2} &= (\rho-r)^{2}+(z-w)^{2}+2r\rho(1-\cos \gamma) \\
		&\ge (\rho-q_{1})^{2}+w^{2}+(q_{1}-r)^{2}+2(\rho-q_{1})(q_{1}-r)+z^{2}-2zw \\
		&\ge (\rho-q_{1})^{2}+w^{2}+\frac{1}{2}\big[(q_{1}-r)^{2}+z^{2}\big] \\
		&\ge \frac{1}{2}\big[(q_{1}-r)^{2}+z^{2}\big].
	\end{align*}
	Thus, for $|(r,z)-\xi|\ge \varepsilon$, observing that for $T$ sufficiently small $\lambda y_{1}+q_{1}\le 2|\xi|\lesssim 1$, we obtain
	\begin{align*}
	|v(x)| &\lesssim \frac{1}{|(r,z)-\xi|} \int_{0}^{\infty}\int_{\mathbb{R}} \frac{\rho}{1+\frac{|(\rho,w)-\xi|}{\lambda(t)}} \chi_{0}\left(\frac{|(\rho,w)-\xi|}{ \sqrt{\delta(T-t)}}\right) dw\, d\rho \\
		&\lesssim \frac{\lambda^{2}(t)}{|(r,z)-\xi|} \int_{B_{\frac{2\delta\sqrt{T-t}}{\lambda}}} \frac{\lambda y_1 +q_1 }{1+|y|}dy \lesssim \frac{\lambda(t)\sqrt{T-t}}{|(r,z)-\xi|}.
	\end{align*}
	Now, introduce a function $\beta(t)\to 0$ to be chosen later. For $|(r,z)-\xi|\le \varepsilon$, split the integral:
	\begin{align*}
		|v(x)| \le \int_{\beta(t)}^{2\pi-\beta(t)}(...)d\gamma + \int_{[0,\beta(t)]\cup [2\pi-\beta(t)]}(...)d\gamma =: I^{(1)}_{\beta(t)}(r,z) + I^{(2)}_{\beta(t)}(r,z).
	\end{align*}	
	To estimate $I^{(1)}_{\beta(t)}(r,z)$, note that
	\begin{align*}
		\rho^{2}+r^{2}-2r\rho \cos \gamma+(z-w)^{2} \ge 2r\rho(1-\cos \gamma),
	\end{align*}
	and
	\begin{align*}
		\left|\int_{\beta(t)}^{2\pi-\beta(t)}\frac{d\gamma}{\sqrt{1-\cos \gamma}}\right| \lesssim \ln \beta(t).
	\end{align*}
	Hence,
	\begin{align}\label{Ieps1}
		|I_{\beta(t)}^{(1)}(r,z)| &\lesssim \ln \beta(t) \lambda^{2}(t) \int_{B_{2\delta\sqrt{T-t}/\lambda}} \frac{1}{1+|y|}dy \lesssim \lambda(t)\ln\beta(t)\sqrt{T-t}.
	\end{align}	
	For $I_{\beta(t)}^{(2)}(r,z)$, observe that
	\begin{align*}
		\rho^{2}+r^{2}-2r\rho \cos \gamma+(z-w)^{2} \ge (\rho-r)^{2}+(z-w)^{2}.
	\end{align*}
	This yields 
	\begin{align*}
		|I_{\beta(t)}^{(2)}(r,z)| &\lesssim \beta(t)\int_{0}^{\infty}\int_{\mathbb{R}} \frac{\rho}{\sqrt{(\rho-r)^{2}+(z-w)^{2}}} \frac{1}{1+ \frac{|(\rho,w)-\xi|}{\lambda}}  \chi_{0}\left(\frac{|(\rho,w)-\xi|}{ \sqrt{\delta(T-t)}}\right)dw\, d\rho.
	\end{align*}
	Now let $a = \frac{|(\rho,w)-(r,z)|}{\lambda}$ and $\bar{a} = \frac{|(\rho,w)-\xi|}{\lambda}$. Then,
	\begin{align*}
		\frac{1}{1+\bar{a}} - \frac{1}{1+a} &= \frac{a-\bar{a}}{(1+a)(1+\bar{a})} \le \frac{|(r,z)-\xi|/\lambda}{1+a}.
	\end{align*}
	Moreover, recalling that for small $T$ we have $\lambda y_{1}+q_{1}\le 2|\xi|\lesssim 1$ and $|(\rho,w)-(r,z)| \le 2\varepsilon$, we have	\begin{align}\label{Ieps2}
		|I_{\beta(t)}^{(2)}(r,z)| &\lesssim \beta(t)\lambda(t)\left(1+O\left(\frac{|(r,z)-\xi|}{\lambda}\right)\right) \int_{B_{\frac{2\varepsilon}{\lambda(t)}}} \frac{1}{|y|(1+|y|)}dy \nonumber \\
		&\lesssim \beta(t)\lambda(t)|\ln \lambda(t)|\left(1+O\left(\frac{|(r,z)-\xi|}{\lambda}\right)\right) \lesssim \beta(t)|\ln \lambda(t)|.
	\end{align}
	Combining \eqref{Ieps1} and \eqref{Ieps2}, we choose $\beta(t) = \sqrt{T-t}\lambda(t)$. \newline
	The proof is concluded by rescaling and applying standard elliptic estimates.
\end{proof}
In the next Lemma, we give a representation formula for the two dimensional Newtonian potential, which will be instrumental in deriving the desired expansion in three dimensions.
\begin{lemma}
	Let $ f \in C_{0}(\mathbb{R}^{2})$, and suppose \( f(x) = f(|x - \xi|) \) for some fixed \( \xi \in \mathbb{R}^2 \). Then the two dimensional Newtonian potential of \( f \) is given by
	\begin{align} \label{NPvsODE}
		(-\Delta)^{-1} f(x) 
		:= -\frac{1}{2\pi} \int_{\mathbb{R}^2} \ln |x - y| \, f(y) \, dy 
		= -\int_0^r \frac{1}{u} \int_0^u f(s) s \, ds \, du 
		- \int_0^\infty \ln \rho \, f(\rho) \rho \, d\rho,
	\end{align}
	where \( r = |x - \xi| \).
\end{lemma}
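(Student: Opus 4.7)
The plan is to reduce the statement to a one-dimensional ODE argument by exploiting radial symmetry. Since $f$ is radial about $\xi$ and the Newtonian potential commutes with rigid motions, the function $v := (-\Delta)^{-1} f$ is also radial about $\xi$. Writing $v(x) = V(r)$ with $r = |x - \xi|$, the equation $-\Delta v = f$ becomes, in two-dimensional polar coordinates,
\begin{equation*}
-\bigl(V''(r) + \tfrac{1}{r} V'(r)\bigr) = f(r), \qquad \text{equivalently} \qquad (r V'(r))' = -r f(r).
\end{equation*}

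Next, I would integrate this ODE twice. Integrating from $0$ to $r$ and using that $V$ is $C^{1}$ at $\xi$, so that $\lim_{r \to 0^{+}} r V'(r) = 0$, yields
\begin{equation*}
V'(r) = -\frac{1}{r} \int_{0}^{r} s f(s) \, ds.
\end{equation*}
A second integration from $0$ to $r$ gives
\begin{equation*}
V(r) = V(0) - \int_{0}^{r} \frac{1}{u} \int_{0}^{u} s f(s) \, ds \, du.
\end{equation*}
Finally, I would identify the constant $V(0)$ directly from the Newtonian potential by switching to polar coordinates centered at $\xi$:
\begin{equation*}
V(0) = -\frac{1}{2\pi} \int_{\mathbb{R}^{2}} \ln|\xi - y| \, f(y) \, dy = -\frac{1}{2\pi} \int_{0}^{2\pi} \int_{0}^{\infty} \ln \rho \, f(\rho) \rho \, d\rho \, d\theta = -\int_{0}^{\infty} \ln \rho \, f(\rho) \rho \, d\rho,
\end{equation*}
where the angular integration trivially contributes a factor of $2\pi$. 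Combining the two displays gives the claimed representation formula.

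The computation is essentially routine; the only subtle point is the justification of the boundary term $\lim_{r \to 0^{+}} r V'(r) = 0$, but this follows immediately from the continuity of $\nabla v$ at $\xi$, which is guaranteed by the assumption $f \in C_{0}(\mathbb{R}^{2})$ via standard elliptic regularity for the logarithmic potential. Likewise, the integrability of $\ln \rho \, f(\rho) \rho$ near $\rho = 0$ and its vanishing for large $\rho$ are immediate from compact support and continuity of $f$, so every integral appearing in the statement is absolutely convergent.
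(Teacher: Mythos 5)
Your argument is correct, but it takes a genuinely different route from the paper. The paper evaluates the potential directly in polar coordinates centered at $\xi$: it computes the angular integral
\begin{equation*}
\int_{0}^{2\pi}\ln\bigl(r^{2}+\rho^{2}-2r\rho\cos\phi\bigr)\,d\phi = 4\pi\ln\max(r,\rho)
\end{equation*}
via a table formula from Gradshteyn--Ryzhik (equivalently, the mean value property of the harmonic function $\ln|\cdot|$), which yields the ``shell'' representation $(-\Delta)^{-1}f = -\int_{r}^{\infty}\ln\rho\,f(\rho)\rho\,d\rho - \ln r\int_{0}^{r}f(\rho)\rho\,d\rho$, and then rearranges by adding and subtracting $\int_{0}^{\infty}\ln\rho\,f(\rho)\rho\,d\rho$. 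You instead solve the radial ODE $(rV')'=-rf$ by two integrations and pin down the constant by evaluating the potential at the center. The trade-off: the paper's computation is entirely self-contained at the level of the integral representation and never invokes the PDE, whereas your approach is shorter but leans on the facts that the logarithmic potential of an $f$ that is merely continuous solves $-\Delta v=f$ (which a priori holds only in the distributional sense, so one should say a word about why the classical radial ODE is still legitimate, e.g. via $W^{2,p}_{\mathrm{loc}}$ regularity or by first differentiating the explicit formula for $V'$) and that $\nabla v$ is continuous at $\xi$ to kill the boundary term $\lim_{r\to 0^{+}}rV'(r)$. You flag both points and they are standard, so the proof stands; it simply front-loads elliptic regularity where the paper front-loads a special-function identity.
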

\begin{proof}
	Without loss of generality, we may assume \( \xi = 0 \). Then, for \( x = re^{i\theta} \), we compute:
	\begin{align*}
		u(re^{i\theta}) 
		&= -\frac{1}{2\pi} \int_0^\infty \int_0^{2\pi} \ln \sqrt{r^2 + \rho^2 - 2r\rho \cos(\theta - \phi)} \, f(\rho) \rho \, d\phi \, d\rho \\
		&= -\frac{1}{2\pi} \int_0^\infty \int_0^{2\pi} \ln \sqrt{r^2 + \rho^2 - 2r\rho \cos \phi} \, f(\rho) \rho \, d\phi \, d\rho \\
		&= -\frac{1}{2\pi} \int_0^\infty I(r,\rho) f(\rho) \rho \, d\rho,
	\end{align*}
	where \( I(r,\rho) := \int_0^{2\pi} \ln \sqrt{r^2 + \rho^2 - 2r\rho \cos \phi} \, d\phi \).
	Then:
	\begin{align*}
		I(r,\rho) 
		&= \frac{1}{2} \int_0^{2\pi} \ln \left[ r^2 \left(1 + \frac{\rho^2}{r^2} - 2\frac{\rho}{r} \cos \phi \right) \right] d\phi \\
		&= \frac{1}{2} \left[ 4\pi \ln r + \int_0^{2\pi} \ln \left( 1 + \frac{\rho^2}{r^2} - 2\frac{\rho}{r} \cos \phi \right) d\phi \right].
	\end{align*}
	We now apply formula 4.224–14 in \cite{GR}:
	\begin{align*}
		\int_0^{n\pi} \ln(a^2 + b^2 - 2ab \cos \phi) \, d\phi = 
		\begin{cases}
			n\pi \ln b^2, & \text{if } a^2 \geq b^2 > 0, \\
			n\pi \ln a^2, & \text{if } b^2 > a^2 > 0.
		\end{cases}
	\end{align*}
	Choosing \( a = \frac{\rho}{r} \), \( b = 1 \), we distinguish two cases:
	if \( \rho \leq r \), then \( I(r,\rho) = 2\pi \ln r \),
	if \( \rho > r \), then \( I(r,\rho) = 2\pi \ln \rho \).
	Thus, we obtain:
	\begin{align*}
		(-\Delta)^{-1} f(x) 
		= -\int_r^\infty \ln \rho \, f(\rho) \rho \, d\rho 
		- \ln r \int_0^r f(\rho) \rho \, d\rho.
	\end{align*}
	Finally, we manipulate the expression by adding and subtracting the full integral:
	\begin{align*}
		(-\Delta)^{-1} f(x) 
		&= \left[ -\int_0^\infty \ln \rho \, f(\rho) \rho \, d\rho \right] 
		+ \int_0^r \ln \rho \, f(\rho) \rho \, d\rho 
		- \ln r \int_0^r f(\rho) \rho \, d\rho \\
		&= -\int_0^r \frac{1}{u} \int_0^u f(s) s \, ds \, du 
		- \int_0^\infty \ln \rho \, f(\rho) \rho \, d\rho.
	\end{align*}
\end{proof}
\begin{theorem}\label{Expansionv0}
	Assume that \eqref{EstiLambda} and \eqref{EstiPar} hold. Let \( v \) be the solution, given by the Newtonian potential, of the equation
	\begin{align*}
		-\Delta_{(r,z)} v_{0} - \frac{1}{r} \, \partial_{r} v_{0} 
		= \frac{\alpha(t)}{\lambda^{2}(t)} \, U\left( \frac{r - q_{1}(t)}{\lambda(t)}, \frac{z}{\lambda(t)} \right) 
		\chi_{0} \left( \frac{|(r,z) - \xi(t)|}{\sqrt{T - t}} \right), 
		\quad (r,z) \in \mathbb{R}^{+} \times \mathbb{R}.
	\end{align*}
	Then, if \( |(r,z) - \xi| \le \varepsilon \ll 1 \), the function \( v \) satisfies the following expansion:
	\begin{align}\label{Expansion3d}
		v_{0}(r,z) = 
		& \alpha(t)\left[ \Gamma_{0}^{R}(r,z) + (4 - M_{0}(t)) \ln \sqrt{T - t} + c_{0}(t) \right] 
		\chi_{0}\left( \frac{|(r,z) - \xi|}{\sqrt{\delta(T - t)}} \right) \nonumber \\
		& + \alpha(t)\omega_{1}(r,z) \left[ \Gamma_{0}^{R}(r,z) + (4 - M_{0}(t)) \ln \sqrt{T - t} + c_{0}(t) + c_{1}(r,t) \right] 
		\left( 1 - \chi_{0} \left( \frac{|(r,z) - \xi|}{\sqrt{\delta(T - t)}} \right) \right) \nonumber \\
		& + O(1)
	\end{align}
	where
	\begin{align*}
		\begin{cases}
			\Gamma_{0}^{R}(r,z) 
			:= \Gamma_{0} \left( \frac{(r,z) - \xi}{\lambda} \right) - 4 \log \lambda, \\[5pt]					
			c_{0}(t) = O\left( \frac{\lambda^{2}}{T - t} \right), \quad 
			c_{1}(r,z,t) = O\left( \frac{\lambda^{2}}{T - t} \right), \\[5pt]			
			M_{0}(t) = 4 \left( 1 + O\left( \frac{\lambda^{2}}{T - t} \right) \right), \\[5pt]			
			\omega_{1}(r,z) 
			= \frac{2q_{1}}{\sqrt{z^{2} + (r + q_{1})^{2}}}
			\left( 1 + \frac{1}{4} \frac{z^{2} + (r - q_{1})^{2}}{z^{2} + (r + q_{1})^{2}} \right)
			\Rightarrow 
			|\omega_{1}(r,z) - 1| = O\left( \sqrt{(r - q_{1})^{2} + z^{2}} \right)
			\quad \text{as } |(r,z) - \xi| \to 0.
		\end{cases}
	\end{align*}
\end{theorem}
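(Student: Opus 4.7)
The plan is to begin with the explicit representation of the axisymmetric inverse Laplacian provided by Lemma \ref{ReprLemmaAS}, namely
\begin{equation*}
v_0(r,z) = \frac{1}{\pi}\int_{\mathbb{R}}\int_0^\infty \frac{f(\rho,w)\rho}{P}\,K(k)\,d\rho\,dw,
\end{equation*}
where $P = \sqrt{(z-w)^2 + (r+\rho)^2}$, $Q = |(r,z) - (\rho,w)|$ and $1 - k^2 = Q^2/P^2$, with $f$ the source on the right-hand side. Since $f$ is concentrated around $\xi = (q_1,0)$ at scale $\sqrt{T-t}$ and we evaluate at $|(r,z)-\xi|\le \varepsilon \ll 1$, the modulus $1-k^2$ is small and $P = 2q_1 + O(\sqrt{T-t})$ throughout the integration region. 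Substituting \eqref{ExpansionK1} gives
\begin{equation*}
K(k) = \ln\frac{4P}{Q} + \frac{Q^2}{4P^2}\Bigl[\ln\frac{4P}{Q} - \frac{1}{6}\Bigr] + O\Bigl(\frac{Q^4}{P^4}\,|\ln(Q/P)|\Bigr),
\end{equation*}
whose remainder contributes $O(1)$ to $v_0$ by a direct application of Lemma \ref{Second3Dcorrection}.

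Next I would split the leading term as $\ln(4P/Q) = \ln(4P) - \ln Q$. The $-\ln Q$ piece is, up to the slowly varying factor $\rho/P$, a two-dimensional Newtonian potential of $f$, and will produce $\alpha\Gamma_0^R$ via the formula \eqref{NPvsODE} applied to the rescaled profile $U$. In the inner region $|(r,z)-\xi|\le 2\sqrt{\delta(T-t)}$, a Taylor expansion of the kernel around $(r,z)=(\rho,w)=\xi$ yields $\rho/P = 1/2 + O(\sqrt{T-t})$, so the $-\ln Q$ integral reduces to $-\frac{1}{2\pi}\int f \ln Q\,d\rho\,dw = \alpha\,\Gamma_0\bigl(\tfrac{(r,z)-\xi}{\lambda}\bigr)$ plus constants from the cutoff truncation. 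The complementary $\ln(4P)\approx \ln(8q_1)$ contribution, combined with the tail mass deficit created by $\chi_0$, produces the logarithmic cancellation $\alpha(4-M_0)\log\sqrt{T-t} + \alpha c_0$, where $M_0 = \frac{1}{2\pi\alpha}\int f = 4 + O(\lambda^2/(T-t))$ since $\chi_0$ truncates $U$ at scale $\sqrt{T-t}/\lambda$, at which the tail of $U$ has mass $O(\lambda^2/(T-t))$. In the outer region $2\sqrt{\delta(T-t)}\le |(r,z)-\xi|\le \varepsilon$, instead one freezes $(\rho,w)=\xi$ inside the kernel, giving $\rho/P \approx q_1/P_0(r,z)$ with $P_0(r,z) := \sqrt{z^2 + (r+q_1)^2}$; this produces the prefactor $2q_1/P_0(r,z)$ of $\omega_1(r,z)$ multiplying the same 2D Newtonian potential expression.

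I would then handle the second-order term $\frac{1}{4\pi}\int \frac{f\rho}{P}\cdot\frac{Q^2}{P^2}\bigl[\ln\frac{4P}{Q}-\frac{1}{6}\bigr]d\rho\,dw$. Inside the cutoff $Q^2/P^2 = O(T-t)$, so this contribution is absorbed in the $O(1)$ error. Outside, freezing the source center again gives $Q^2/P^2 \approx \frac{z^2+(r-q_1)^2}{z^2+(r+q_1)^2}$, which after multiplication by $1/4$ combines with the leading piece to yield precisely the second factor of $\omega_1$; the $-1/6$ term feeds into $\alpha c_1$. The main obstacle will be the precise bookkeeping of the logarithmic constants: the cancellations among $4\log\lambda$ (hidden in $\Gamma_0^R$), the factor $(4-M_0)\log\sqrt{T-t}$, and the cutoff mass deficit, must all combine to give an $O(1)$ error, and this requires a careful accounting of the boundary contributions near $|(\rho,w)-\xi|\sim \sqrt{T-t}$. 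A secondary nontrivial point is to verify that the matching between inner and outer expansions at $|(r,z)-\xi|\sim \sqrt{\delta(T-t)}$ is seamless, which follows from the relation $\omega_1(r,z)\to 1$ as $(r,z)\to \xi$ together with the smoothness of $\chi_0$, thereby closing the argument modulo the routine calculations.
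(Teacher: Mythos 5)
Your proposal is correct in substance but takes a genuinely different route from the paper. The paper never applies the elliptic-integral machinery to the full source: it first writes $v_0\approx v_1+v_2+v_3$, where $v_1$ is the cut-off two-dimensional profile $[\Gamma_0+4\ln(\sqrt{\delta(T-t)}/\lambda)]\chi$, $v_2$ removes the resulting term $\tfrac1r\partial_r v_1$ via Lemma \ref{Second3Dcorrection}, and only the remaining two-dimensional cut-off error $\mathcal{E}_{2d}$ — a bounded-mass function living at the single scale $\sqrt{T-t}$ — is fed into the representation of Lemma \ref{ReprLemmaAS} and the expansion \eqref{ExpansionK1}; the factor $\omega_1$ then emerges from the kernel $\tfrac{\rho}{P}\bigl(1+\tfrac{Q^2}{4P^2}\bigr)$ acting on $\mathcal{E}_{2d}$, and the pieces are patched together at the end via Corollary \ref{UniqueDecayCoro}. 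You instead expand the exact Newtonian potential of the full singular source directly. This buys you a shorter argument: there is no $\tfrac1r\partial_r v_1$ term to correct and no final uniqueness/patching step, and $\omega_1$ appears in one stroke from freezing $(\rho,w)=\xi$ in $\tfrac{\rho}{P}$, $\ln(4P)$ and $Q^2/P^2$ in the outer region (respectively $\tfrac{\rho}{P}\to\tfrac12$ in the inner region). The price is that all kernel-freezing errors, which are $O(\sqrt{T-t})$ uniformly on $\operatorname{supp} f$, must now be integrated against the scale-$\lambda$ core, so you need the analogue of \eqref{logmass} in the form $\int f\,(1+|\ln Q|)\lesssim|\ln\lambda|\sim|\ln(T-t)|$ to conclude that each such error is $O(\sqrt{T-t}\,|\ln(T-t)|)=o(1)$; this is exactly the "bookkeeping of logarithmic constants" you flag, and it does close, since the second-order and remainder terms of \eqref{ExpansionK1} contribute $O((T-t)|\ln(T-t)|)$ inside the cut-off and $O(\varepsilon^2)$, $O(\varepsilon^4|\ln\varepsilon|)$ outside.

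Two minor corrections. First, the remainder of the $K$-expansion is controlled by the elementary pointwise bound $Q^4|\ln Q|=O(1)$ integrated against $\|f\|_{L^1}$, not by Lemma \ref{Second3Dcorrection}, which concerns sources with $(1+|y|/\lambda)^{-1}$ decay and is used in the paper only to kill $\tfrac1r\partial_r v_1$. Second, your $M_0=\tfrac{1}{2\pi\alpha}\int f$ is not the paper's $M_0=\int\mathcal{E}_{2d}$; both equal $4(1+O(\lambda^2/(T-t)))$, and since $(4-M_0)\ln\sqrt{T-t}$, $c_0$ and $c_1$ are all $o(1)$, the discrepancy is invisible at the stated $O(1)$ accuracy, but your $c_0,c_1,M_0$ will not coincide with the paper's beyond that.
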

\begin{proof}
	In the following, we factor out $\alpha(t)$ without altering the notation for the solution $v_0$. The final expression will then be obtained by multiplying the resulting expansion by $\alpha(t)$.
We proceed in four steps.\newline
	\underline{First Step:} We begin with the natural first approximation:
	\begin{align*}
		v_{1}(r,z) = \left[ \Gamma_{0} \left( \frac{|(r,z) - \xi|}{\lambda} \right) + 4 \ln \left( \frac{\sqrt{\delta(T - t)}}{\lambda} \right) \right] 
		\chi_{0} \left( \frac{|(r,z) - \xi|}{\sqrt{\delta(T - t)}} \right).
	\end{align*}
	The corresponding error can be computed in the usual way:
	\begin{align*}
		\left( -\Delta_{(r,z)} - \frac{1}{r} \partial_{r} \right)(v - v_{1}) 
		= \frac{1}{r} \partial_{r} v_{1} + \mathcal{E}_{2d}(r,z),
	\end{align*}
	where
	\begin{align} \label{2dError3}
		\mathcal{E}_{2d} \left( \frac{|(r,z) - \xi|}{\sqrt{T - t}} \right) 
		&= \left[ \Gamma_{0} \left( \frac{|(r,z) - \xi|}{\lambda} \right) - \Gamma_{0} \left( \frac{\sqrt{T - t}}{\lambda} \right) \right] 
		\Delta_{(r,z)} \chi \nonumber \\
		&\quad + 2 \nabla_{(r,z)} \left[ \Gamma_{0} \left( \frac{|(r,z) - \xi|}{\lambda} \right) - \Gamma_{0} \left( \frac{\sqrt{T - t}}{\lambda} \right) \right] 
		\cdot \nabla_{(r,z)} \chi.
	\end{align}
	We observe that \( \mathcal{E}_{2d}(r,z) \) coincides precisely with the error one would ecounter in the two dimensional equation.\newline
	\underline{Second Step:} In this step, we eliminate the term \( \frac{1}{r} \partial_{r} v_{1} \). This is achieved by applying Lemma~\ref{Second3Dcorrection}, after noting that
	\begin{align*}
		\left| \frac{1}{r} \partial_{r} v_{1} \right| 
		\lesssim \frac{1}{\lambda}  \frac{1}{1 + \frac{|(r,z) - \xi|}{\lambda}} 
		\chi_{0} \left( \frac{|(r,z) - \xi|}{\sqrt{T - t}} \right).
	\end{align*}
	We then introduce a correction term \( v_{2}(r,z) \), which satisfies the following estimate:
	\begin{align*}
		|v_{2}(r,z)| \lesssim \sqrt{T - t}  
		\begin{cases}
			|\ln(T - t)|, & \text{if } |(r,z) - \xi| \leq \varepsilon, \\
			\dfrac{1}{|(r,z) - \xi|}, & \text{if } |(r,z) - \xi| \geq \varepsilon.
		\end{cases}
	\end{align*}
	This correction can be naturally incorporated into the error terms introduced in the statement of the theorem. \newline
	\underline{Third Step:} In this step, our goal is to eliminate the self-similar two-dimensional error term:
	\begin{align*}
		-\Delta_{\mathbb{R}^{3}}v_{2} = \mathcal{E}_{2d}(x) = \mathcal{E}_{2d}(r,z).
	\end{align*}
	Using Lemma \ref{ReprLemmaAS}, we can represent $v_2$ as
	\begin{align*}
		v_{2}(r,z) = \frac{1}{\pi}\int_{\mathbb{R}} \int_{0}^{\infty} \frac{\mathcal{E}_{2d}(\rho,w)\rho}{\sqrt{(z-w)^{2}+(r+\rho)^{2}}} K\left( \sqrt{ \frac{4r\rho}{(z-w)^{2}+(r+\rho)^{2}} } \right) \, d\rho\, dw.
	\end{align*}
	Our strategy is to apply the expansion \eqref{ExpansionK1} in the region where $|(r,z)-\xi|\leq \varepsilon$ for some small $\varepsilon > 0$. In this regime, we observe:
	\begin{align}\label{onex2}
		x = \sqrt{ \frac{4r\rho}{(z-w)^{2}+(r+\rho)^{2}} } \quad \Rightarrow \quad 1 - x^2 = \frac{(z-w)^2 + (r - \rho)^2}{(z-w)^2 + (r + \rho)^2} \ll 1,
	\end{align}
	where the final inequality follows under the assumptions $|(r,z)-\xi| \le \varepsilon \ll 1$ and sufficiently small $T$. Before proceeding with the expansion, we perform several auxiliary computations for simplification.
	Let us define the total mass
	\begin{align*}
		M_0 := \int_{\mathbb{R}^2} \mathcal{E}_{2d}(\rho,w)\, d\rho\, dw.
	\end{align*}
	Introducing the scaling variable $s = \frac{|(\rho,w)-\xi|}{\sqrt{\delta(T-t)}}$, a straightforward expansion yields:
	\begin{align}\label{E2dAsy}
		\mathcal{E}_{2d}(\rho,w) = \frac{1}{\delta(T-t)}\left[ -4\log s\left( \chi''(s) + \frac{1}{s}\chi'(s) \right) - \frac{8}{s}\chi'(s) \right] + O\left( \frac{\lambda^2}{\delta(T-t)} \mathds{1}_{1\leq s \leq 2} \right),
	\end{align}
	which implies
	\begin{align}\label{M0}
		M_0 = 4 \left( 1 + O\left( \frac{\lambda^2}{\delta(T-t)} \right) \right).
	\end{align}
	We next analyze the inverse Laplacian of $\mathcal{E}_{2d}$. By using the representation formula \eqref{NPvsODE}, we compute:
	\begin{align}\label{2dInverse}
		(-\Delta_{\mathbb{R}^2})^{-1} \mathcal{E}_{2d}(\rho,w) 
		&= (-4 \log s + c_1(r,z))(1 - \chi_0(s)) - \int_0^{\infty} \ln u\, \mathcal{E}_{2d}\left( \frac{u}{\sqrt{\delta(T-t)}} \right) u\, du \nonumber \\
		&= -M_0 \ln \sqrt{\delta(T - t)} + c_0(t) + (-4 \log s + c_1(r,z))(1 - \chi_0(s)),
	\end{align}
	where $c_1(r,z) = O\left( \frac{\lambda^2}{T - t} \right)$, and
	\begin{align*}
		c_0(t) = \int_0^{\infty} \ln\left( \frac{u}{\sqrt{\delta(T - t)}} \right) \mathcal{E}_{2d}\left( \frac{u}{\sqrt{\delta(T - t)}} \right) u\, du.
	\end{align*}
	Using \eqref{E2dAsy}, this integral becomes:
	\begin{align*}
		c_0(t) &= \int_0^{\infty} \left[ -4\ln s\left( \chi''(s) + \frac{1}{s} \chi'(s) \right) - \frac{8}{s} \chi'(s) \right] s \ln s\, ds + O\left( \frac{\lambda^2}{\delta(T - t)} \right) \\
		&= -4 \int_0^{\infty} \frac{d}{ds} \left( s (\ln s)^2 \chi'(s) \right) ds + O\left( \frac{\lambda^2}{\delta(T - t)} \right) = O\left( \frac{\lambda^2}{\delta(T - t)} \right).
	\end{align*}
	We now make three useful observations that will be instrumental in controlling the remainder terms.
	First, we have the estimate:
	\begin{align}\label{err1control}
		\left| \frac{\rho}{\sqrt{(z - w)^2 + (r + \rho)^2}} - \frac{q_1}{\sqrt{z^2 + (r + q_1)^2}} \right| \lesssim \sqrt{T - t}.
	\end{align}
	Second, the following approximation holds:
	\begin{align}\label{err2control}
		\left| \frac{(z - w)^2 + (r - \rho)^2}{(z - w)^2 + (r + \rho)^2} - \frac{z^2 + (r - q_1)^2}{z^2 + (r + q_1)^2} \right| \lesssim \sqrt{T - t}.
	\end{align}
	The third observation concerns the control of a logarithmic integral involving the error term \( \mathcal{E}_{2d} \). We claim that:
	\begin{align}\label{logmass}
		\int_{\mathbb{R}^2} \mathcal{E}_{2d}(\rho, w) \left| \ln\left( (\rho - r)^2 + (z - w)^2 \right) \right| \lesssim |\ln(T - t)|.
	\end{align}
	To see this, we split the integral into two regions. For the inner region where \( \sqrt{(\rho - r)^2 + (z - w)^2} \le \beta(t) \), we estimate:
	\begin{align*}
		\int_{\sqrt{(\rho - r)^2 + (z - w)^2} \le \beta(t)} \mathcal{E}_{2d}(\rho, w) \left| \ln\left( (\rho - r)^2 + (z - w)^2 \right) \right| 
		\lesssim \frac{1}{T - t} \int_{0}^{\beta(t)} s \ln s \, ds 
		\lesssim \frac{\beta^2(t) |\ln \beta(t)|}{T - t}.
	\end{align*}
	For the outer region where \( \sqrt{(\rho - r)^2 + (z - w)^2} \ge \beta(t) \), we use the bound:
	\begin{align*}
		\int_{\sqrt{(\rho - r)^2 + (z - w)^2} \ge \beta(t)} \mathcal{E}_{2d}(\rho, w) \left| \ln\left( (\rho - r)^2 + (z - w)^2 \right) \right| 
		\lesssim |\ln \beta(t)|.
	\end{align*}
	By choosing \( \beta(t) = \sqrt{T - t} \), we obtain the desired estimate \eqref{logmass}.
	With these preparations, and applying the expansion \eqref{ExpansionK1} using \eqref{onex2}, we write
	\begin{align}\label{firstexpv2}
		v_2(r,z) &= \frac{\ln 4}{\pi} \int_{\mathbb{R}^2} \frac{\mathcal{E}_{2d}(\rho,w)\rho}{\sqrt{(z - w)^2 + (r + \rho)^2}}\, d\rho dw \nonumber \\
		&\quad - \frac{1}{\pi} \int_{\mathbb{R}^2} \frac{\mathcal{E}_{2d}(\rho,w)\rho}{\sqrt{(z - w)^2 + (r + \rho)^2}} \ln \sqrt{ \frac{(z - w)^2 + (r - \rho)^2}{(z - w)^2 + (r + \rho)^2} }\, d\rho dw \nonumber \\
		&\quad + \frac{\ln 4}{4\pi} \int_{\mathbb{R}^2} \frac{(z - w)^2 + (r - \rho)^2}{(z - w)^2 + (r + \rho)^2} \cdot \frac{\mathcal{E}_{2d}(\rho,w)\rho}{\sqrt{(z - w)^2 + (r + \rho)^2}}\, d\rho dw \nonumber \\
		&\quad - \frac{1}{4\pi} \int_{\mathbb{R}^2} \frac{(z - w)^2 + (r - \rho)^2}{(z - w)^2 + (r + \rho)^2} \cdot \frac{\mathcal{E}_{2d}(\rho,w)\rho}{\sqrt{(z - w)^2 + (r + \rho)^2}} \ln \sqrt{ \frac{(z - w)^2 + (r - \rho)^2}{(z - w)^2 + (r + \rho)^2} }\, d\rho dw \nonumber \\
		&\quad + O\left( \left(z^2 + (r - q_1)^2\right)^2 |\ln(z^2 + (r - q_1)^2)|^2 \right) + O(\sqrt{T - t}),
	\end{align}
	where the error term is controlled using the boundedness of the derivative of $f(x) = x^2 \ln x$ near zero, together with \eqref{err1control}–\eqref{err2control}.
	Combining and simplifying these terms, we obtain the refined expansion:
	\begin{align}\label{secexpv2}
		v_2(r,z) &= \frac{2q_1}{\sqrt{z^2 + (r + q_1)^2}} \left( 1 + \frac{1}{4}  \frac{z^2 + (r - q_1)^2}{z^2 + (r + q_1)^2} \right) (-\Delta_{\mathbb{R}^2})^{-1} \mathcal{E}_{2d}(\rho,w) \nonumber \\
		&\quad + \mathcal{V}(r,z) M_0(t) + O\left( \left(z^2 + (r - q_1)^2\right)^2 |\ln(z^2 + (r - q_1)^2)|^2 \right) + O(\sqrt{T - t} |\ln(T - t)|),
	\end{align}
	where, recalling \eqref{M0}, the function \( \mathcal{V} \) is defined by
	\begin{align}\label{defV}
		\mathcal{V}(r,z) := \frac{1}{\pi}  \frac{q_1}{\sqrt{z^2 + (r + q_1)^2}} \left( \ln 4 + \ln \sqrt{z^2 + (r + q_1)^2} \right)\left[ 1+  \frac{1}{4}  \frac{z^2 + (r - q_1)^2}{z^2 + (r + q_1)^2}  \right].
	\end{align}
	We remark that the final logarithmic term in \eqref{secexpv2} originates from the second and fourth integrals in \eqref{firstexpv2} thanks to the estimate \eqref{logmass}. \newline
	\underline{Fourth Step:} We now collect all the expansions obtained so far. Recall from Lemma \ref{Second3Dcorrection} that the term \( v_1 \) can be absorbed into the error as it satisfies the estimate \( v_1 = O(\sqrt{T-t}|\ln(T-t)|) \). Let us introduce the notation
	\[
	\omega_{1}(r,z) := \frac{2q_{1}}{\sqrt{z^{2}+(r+q_{1})^{2}}} \left(1+\frac{1}{4}\frac{z^{2}+(r-q_{1})^{2}}{z^{2}+(r+q_{1})^{2}} \right),
	\]
	so that the expression for \( v_1 + v_2 \) becomes
	\begin{align*}
		v_1(r,z) + v_2(r,z) =\;& \left[-4\ln \sqrt{\delta(T-t)} + (4 - M_0(t))\ln \sqrt{\delta(T-t)} + c_0(t) \right] \chi_{0}\left( \frac{|(r,z)-\xi|}{\sqrt{\delta(T-t)}} \right) \\
		&+ \omega_1(r,z) \left[ -4\ln |(r,z)-\xi| + (4 - M_0(t))\ln \sqrt{\delta(T-t)} + c_0(t) + c_1(r,z) \right]\cdot \\
		&\quad \cdot \left(1 - \chi_{0}\left( \frac{|(r,z)-\xi|}{\sqrt{\delta(T-t)}} \right)\right) + \mathcal{V}(r,z) M_0(t) \\
		&+ O\left((z^2 + (r - q_1)^2)^2 |\ln(z^2 + (r - q_1)^2)|^2\right) + O\left(\sqrt{T-t}|\ln(T-t)|\right).
	\end{align*}
	Now we can add the contribution from \( v_0(r,z) \), and absorb the difference
	$-4\ln |(r,z)-\xi| - \Gamma_0(r,z) - 4\ln \lambda$
	into \( c_1(r,z) \). We can conclude the proof by cutting $v_{1}+v_{2}+v_{3}$ in the region \( |(r,z) - \xi| \le \varepsilon \), erasing the new error by the Newtonian potential, and then applying Corollary~\ref{UniqueDecayCoro}.
\end{proof}
What remains is to establish the asymptotic expansion for the gradient of $v_{0}$, which is precisely the focus of the following Proposition.
\begin{proposition}\label{3dGRADIENTExpansion}
	Let $v_{0}$ and $\omega_1(r,z)$ be the functions defined in Theorem~\ref{Expansionv0}. Then, for $\varepsilon > 0$ sufficiently small and $M > 0$ sufficiently large, the following expansion holds:
	\begin{align}\label{ExpansionGradv0}
		\nabla_{(r,z)} v_{0}(r,z,t) = 
		\begin{cases}
			\nabla_{(r,z)} \Gamma_0(r,z) +O(1) & \text{if } |(r,z) - \xi| \leq 2\sqrt{\delta (T - t)}, \\[5pt]
			\omega_1(r,z,t)\, \nabla_{(r,z)} \Gamma_0(r,z) +O(|\ln(|(r,z)-\xi)|) & \text{if } 2\sqrt{\delta (T - t)} \leq |(r,z) - \xi| \leq \varepsilon, \\[5pt]
			O(1) & \text{if } \varepsilon \leq |(r,z) - \xi| \leq M, \\[5pt]
			-4\pi q_1 \left(1 + O\left(\frac{\lambda^2}{T - t} \right)\right) \frac{(r,z)}{|(r,z)|^3} \left(1 + O\left(\frac{1}{M}\right)\right) & \text{if } |(r,z) - \xi| \geq M.
		\end{cases}
	\end{align}
\end{proposition}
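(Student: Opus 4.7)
The plan is to differentiate the expansion of $v_0$ provided by Theorem \ref{Expansionv0} in each of the first three regions, and to invoke the far-field Newtonian asymptotics of Corollary \ref{UniqueDecayCoro} in the outermost one. In the innermost region $|(r,z)-\xi|\le 2\sqrt{\delta(T-t)}$, the cutoff in that expansion equals one, so
\begin{equation*}
v_0 = \alpha(t)\bigl[\Gamma_0^R(r,z) + (4-M_0(t))\ln\sqrt{T-t} + c_0(t)\bigr] + O(1).
\end{equation*}
The only $(r,z)$-dependent piece inside the bracket is $\Gamma_0^R$, whose gradient $\lambda^{-1}(\nabla_y\Gamma_0)(((r,z)-\xi)/\lambda)$ supplies the announced leading term $\nabla\Gamma_0$. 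The gradients of the remaining $O(1)$ contributions are themselves $O(1)$ because they come either from $v_1$ (treated by Lemma \ref{Second3Dcorrection}) or from a Newtonian potential of a bounded, essentially compactly supported source, for which standard elliptic regularity applies.

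In the intermediate region $2\sqrt{\delta(T-t)}\le|(r,z)-\xi|\le \varepsilon$, I apply the product rule to
\begin{equation*}
v_0 = \alpha\,\omega_1(r,z)\bigl[\Gamma_0^R + (4-M_0)\ln\sqrt{T-t} + c_0 + c_1\bigr] + O(1).
\end{equation*}
The main term $\alpha\omega_1\nabla\Gamma_0^R$ matches the claimed leading order, while the cross term $\alpha(\nabla\omega_1)(\Gamma_0^R+c_0+c_1)$ produces exactly the announced $O(|\ln|(r,z)-\xi||)$ error, since $\omega_1$ is smooth with bounded derivatives for $|(r,z)-\xi|\le\varepsilon\ll 1$ and $\Gamma_0^R\sim -4\log|(r,z)-\xi|$. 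The remaining contributions $\alpha\omega_1\nabla c_1 = O(\lambda^2/|(r,z)-\xi|^3)$ and the gradient of the $O(1)$ remainder are lower order. For the annulus $\varepsilon\le|(r,z)-\xi|\le M$, the source $u_0$ is supported in a ball of radius $\sqrt{\delta(T-t)}\ll\varepsilon$ around $\xi$, so $v_0$ is harmonic there; interior gradient estimates combined with the uniform $L^\infty$ bound coming from Theorem \ref{Expansionv0} and Lemma \ref{Pb310Salsa3D} yield $|\nabla v_0|=O(1)$.

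For the outermost region $|(r,z)-\xi|\ge M$, I apply Corollary \ref{UniqueDecayCoro} to $v_0=(-\Delta_{\mathbb{R}^3})^{-1}u_0$. Passing to cylindrical coordinates and rescaling with $y=((r-q_1)/\lambda,z/\lambda)$,
\begin{equation*}
\int_{\mathbb{R}^3} u_0\,dx = 2\pi\alpha\int_{\mathbb{R}^2} U(y)\,\chi_0(\cdot)(\lambda y_1+q_1)\,dy = 16\pi^2 q_1\alpha + O\!\left(\tfrac{\lambda^2}{T-t}\right),
\end{equation*}
where the correction absorbs both the $\lambda y_1$ shift and the truncation of $U$ by $\chi_0$ at scale $\sqrt{\delta(T-t)}$. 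Substituting into \eqref{expansionInvInfGrad} and grouping the $|(r,z)|^{-3}$ remainder into a $(1+O(1/M))$ factor produces the stated prefactor $-4\pi q_1(1+O(\lambda^2/(T-t)))(r,z)/|(r,z)|^3$.

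The main obstacle I anticipate is that Theorem \ref{Expansionv0} provides only pointwise control of $v_0$, while the proposition requires control on its gradient. My approach is to revisit the proof of that theorem and differentiate its ingredients one by one: Lemma \ref{Second3Dcorrection} already supplies gradient bounds on the three-dimensional correction $v_2$; the other $O(1)$ Newtonian potential terms originate from bounded, effectively compactly supported data and hence admit bounded gradients by elliptic regularity; and the explicit cylindrical weight $\omega_1$ (together with the auxiliary function $\mathcal{V}$ appearing in \eqref{defV}) is differentiated directly. This upgrade from pointwise to gradient estimates is the technical crux, but it reduces to a systematic bookkeeping once the building blocks of Theorem \ref{Expansionv0} are tracked individually.
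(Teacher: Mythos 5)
Your proposal is correct and follows essentially the same route as the paper: for $|(r,z)-\xi|\le\varepsilon$ the paper likewise revisits the proof of Theorem~\ref{Expansionv0} and differentiates its ingredients (passing the gradient onto $\mathcal{E}_{2d}$ in the representation of $v_2$ and reusing \eqref{err1control} and \eqref{logmass}), while for $|(r,z)-\xi|\ge\varepsilon$ it uses the compact support of $u_0$ together with Lemma~\ref{Pb310Salsa3D} and Corollary~\ref{UniqueDecayCoro} and the same mass computation. Your value $\int u_0 = 16\pi^2 q_1(1+O(\lambda^2/(T-t)))$ is in fact the one consistent with the stated coefficient $-4\pi q_1$ via \eqref{expansionInvInfGrad} (the paper's displayed value $-16\pi q_1$ appears to be a typo).
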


\begin{proof}
	We begin by differentiating the equation
	\begin{align*}
		-\Delta_{(r,z)} v_{0} - \frac{1}{r} \, \partial_{r} v_{0} 
		= \frac{1}{\lambda^{2}(t)} \, U\left( \frac{r - q_{1}(t)}{\lambda(t)}, \frac{z}{\lambda(t)} \right) 
		\chi_{0} \left( \frac{|(r,z) - \xi(t)|}{\sqrt{T - t}} \right), 
		\quad (r,z) \in \mathbb{R}^{+} \times \mathbb{R}.
	\end{align*}
	We omit the detailed analysis in the region \( |(r,z) - \xi| \le \varepsilon \), as it follows the same arguments used in the proof of Theorem~\ref{Expansionv0}. We just focus on outlining the main steps needed to estimate \( \nabla_{(r,z)} v_{2}(r,z) \).
	Recall that
	\begin{align*}
		\nabla_{(r,z)}v_{2}(r,z) = \frac{1}{\pi} \int_{\mathbb{R}} \int_{0}^{\infty} \frac{\nabla_{(\rho,w)} \mathcal{E}_{2d}(\rho,w) \, \rho}{\sqrt{(z - w)^2 + (r + \rho)^2}} \, K\left( \sqrt{ \frac{4r\rho}{(z - w)^2 + (r + \rho)^2} } \right) \, d\rho \, dw.
	\end{align*}
	For clarity, we restrict attention to the contribution from the first term in the expansion \eqref{ExpansionK1}; the remaining terms can be handled in a similar fashion.
	Our goal is to derive an expansion for the following integral:
	\begin{align*}
		I(r,z) = \frac{1}{\pi} \int_{\mathbb{R}} \int_{0}^{\infty} \frac{\nabla_{(\rho,w)} \mathcal{E}_{2d}(\rho,w) \, \rho}{\sqrt{(z - w)^2 + (r + \rho)^2}} \, K_{1}\left( \sqrt{ \frac{4r\rho}{(z - w)^2 + (r + \rho)^2} } \right) \, d\rho \, dw,
	\end{align*}
	where
	\begin{align*}
		K_{1}\left( \sqrt{ \frac{4r\rho}{(z - w)^2 + (r + \rho)^2} } \right) = \ln 4 + \ln \sqrt{(z - w)^2 + (r + \rho)^2} - \ln |(r,z) - (\rho,w)|.
	\end{align*}
	The first two terms in \( K_1 \) can be handled directly as in the proof of Theorem~\ref{Expansionv0} by invoking estimate~\eqref{err1control} and noting that
	\begin{align*}
		\int_{\mathbb{R}} \int_{0}^{\infty} |\nabla_{(\rho,w)} \mathcal{E}_{2d}(\rho,w)| \, dw \, d\rho = O\left( \frac{1}{\sqrt{T - t}} \right).
	\end{align*}
	The last term can be expanded by performing an integration by parts and then applying estimate~\eqref{logmass}.
	To prove the estimate for $|(r,z)-\xi| \ge \varepsilon$, it is enough to recall the following elementary fact: let $f \in L^{1}(\mathbb{R}^{3})$ with compact support. If $x \notin \mathrm{supp}(f)$, then
	\begin{align}\label{gradoutsupp}
		|\nabla (-\Delta_{\mathbb{R}^{3}})^{-1}f(x)| \lesssim \int_{\mathrm{supp}(f)} |f(y)| \frac{1}{|x - y|^{2}} \, dy \lesssim \frac{1}{d(x, \mathrm{supp}(f))} \|f\|_{L^{1}} < \infty.
	\end{align}
	We now apply \eqref{gradoutsupp}, Lemma \ref{Pb310Salsa3D} and Corollary \ref{UniqueDecayCoro}, observing that
	\begin{align*}
		\int_{\mathbb{R}^{3}} u_{0}(x) \, dx &= 2\pi \int_{0}^{\infty} \int_{\mathbb{R}} \frac{1}{\lambda^{2}} U\left(\frac{(r,z)-\xi(t)}{\lambda(t)}\right) \chi_{0}\left(\frac{(r,z)-\xi(t)}{\sqrt{\delta(T-t)}}\right) r \, dr \, dz \\
		&= -16\pi q_{1} \left(1 + O\left(\frac{\lambda^{2}}{T-t}\right)\right),
	\end{align*}
	and that similarly $\|u_{0}\|_{L^{1}} < \infty$.
\end{proof}
\section{The tailored Correction $\varphi_{\lambda}$}\label{3dphilambdaSec}
As a consequence of Proposition \ref{3dGRADIENTExpansion}, we know that locally $\nabla_{(r,z)} v_{0} \approx \nabla_{(r,z)} \Gamma_0$. Following the approach in \cite{DdPDMW,BDdPM}, we proceed accordingly. The analysis of the chemoattractant's behavior away from the singularity will be addressed in the second part of this section. \newline
Our goal here is to eliminate the leading-order terms in \eqref{S2dERR}. To this end, we introduce a radial correction in the shifted variables $(r,z)-\xi(t)$, defined by the solution to the following equation:
\begin{align}\label{philambdaEqt2D}
	\begin{cases}
		\partial_{t} \varphi_{\lambda}^{(6)} = \Delta_{(r,z)} \varphi_{\lambda}^{(6)} + 4\frac{(r,z)}{|(r,z)|^{2}} \cdot \nabla_{(r,z)} \varphi_{\lambda}^{(6)} + E(r,z,t), & \text{in } (-\varepsilon(T),T) \times \mathbb{R}^{2}, \\
		\varphi_{\lambda}^{(6)}(\cdot, -\varepsilon(T)) = 0, & \text{in } \mathbb{R}^{2},
	\end{cases}
\end{align}
which is solved using Duhamel's formula. Here, the source term $E(r,z,t)$ is given by
\begin{align*}
	E(r,z,t) = \frac{\dot{\lambda}}{\lambda} Z_{0}(\bar{y}) \chi_{0}(\bar{w}) 
	- \frac{1}{2\lambda^{2}(T-t)} U(\bar{y}) \nabla_{\bar{w}} \chi_{0}(\bar{w}) \cdot \bar{w} 
	+ \tilde{E}(r,z;\lambda),
\end{align*}
where
\begin{align*}
	\tilde{E}(r,z;\lambda) =\ &\frac{2}{\lambda^{3} \sqrt{\delta(T-t)}} \nabla_{\bar{w}} \chi_{0}(\bar{w}) \cdot \nabla_{\bar{y}} U(\bar{y}) 
	+ \frac{1}{\delta(T-t)} \frac{1}{\lambda^{2}} \Delta_{\bar{w}} \chi_{0}(\bar{w}) U(\bar{y}) \\
	&- \frac{1}{\alpha} \frac{1}{\lambda^{2} \sqrt{\delta(T-t)}} U(\bar{y}) \nabla_{\bar{w}} \chi_{0}(\bar{w}) \cdot \nabla_{(r,z)} v_{0},
\end{align*}
and the rescaled variables are defined as
\begin{align*}
	\bar{w} = \frac{(r,z)-\xi(T)}{\sqrt{\delta(T-t)}}, \quad \bar{y} = \frac{(r,z)-\xi(T)}{\lambda}.
\end{align*}
The initial time $\varepsilon(T)$ appearing in \eqref{philambdaEqt2D} is a small, explicitly defined parameter whose role is explained in Section \ref{Sectionalpha0lambda0} and it is essential for establishing Proposition \ref{Proplambda0alpha0}. Using the assumptions \eqref{EstiLambda}, \eqref{EstiPar}, and Proposition \ref{3dGRADIENTExpansion}, we obtain the following estimate:
\begin{align*}
	|E(r,z,t)| \le C \frac{1}{\lambda^{2}} \frac{1}{\left(1 + \frac{|(r,z)-\xi(T)|^{2}}{\lambda^{2}} \right)^{2}} \chi_{0} \left( \frac{|(r,z)-\xi(T)|}{2\sqrt{\delta(T-t)}} \right).
\end{align*}
The equation \eqref{philambdaEqt2D} has been thoroughly studied in \cite{BDdPM}.
\begin{lemma}\label{estimatePhilambda}
	Assume $T$ and $\varepsilon(T)$ are sufficiently small, and let $\varphi_{\lambda}^{(6)}$ be the solution to \eqref{philambdaEqt2D}, where $\lambda$ satisfies \eqref{EstiLambda} and \eqref{EstiPar}. Then, for any $(x,t) \in \mathbb{R}^{3} \times (-\varepsilon(T), T)$ with $x = (re^{i\theta}, z)$, the following estimate holds:
	\begin{align*}
		|\varphi_{\lambda}^{(6)}| + (|(r,z)-\xi(T)| + \lambda) |\nabla \varphi_{\lambda}^{(6)}| \le C
		\begin{cases}
			\frac{e^{-\sqrt{2|\ln(T-t)|}}}{\lambda^{2} + |(r,z)-\xi(T)|^{2}}, & \text{if } |(r,z)-\xi(T)| \le \sqrt{T-t}, \\
			\frac{e^{-\sqrt{2|\ln(|(r,z)-\xi|^{2})|}}}{|(r,z)-\xi(T)|^{2}}, & \text{if } \sqrt{T-t} \le |(r,z)-\xi(T)| \le 2\varepsilon(T), \\
			\frac{e^{-\sqrt{2|\ln(\varepsilon(T))|}}}{\varepsilon(T)} \frac{e^{-| (r,z)-\xi(T) |^{2}}}{4(t + 2\varepsilon(T))}, & \text{if } |(r,z)-\xi(T)| \ge \sqrt{\varepsilon(T)}.
		\end{cases}
	\end{align*}
	Moreover, we also have the following bound:
	\begin{align*}
		|\nabla \varphi_{\lambda}^{(6)}| + (|(r,z)-\xi(T)| + \lambda) |D^{2} \varphi_{\lambda}^{(6)}| 
		\le C e^{-\sqrt{2|\ln(T-t)|}} \frac{|(r,z)-\xi(T)|}{(\lambda^{2} + |(r,z)-\xi(T)|^{2})^{2}}, \quad |(r,z)-\xi(T)| \le \sqrt{T-t}.
	\end{align*}
\end{lemma}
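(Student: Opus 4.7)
The plan is to reduce Lemma \ref{estimatePhilambda} to its two-dimensional analogue proved in \cite{BDdPM}. Equation \eqref{philambdaEqt2D} is entirely two-dimensional in structure (the spatial variable being $(r,z)\in\mathbb{R}^2$); the only three-dimensional input is hidden inside $\tilde E$ through the nonlocal gradient $\nabla_{(r,z)}v_0$. Accordingly, the first step I would carry out is to verify the pointwise bound
\[
|E(r,z,t)| \le \frac{C}{\lambda^{2}}\frac{1}{\bigl(1+|\bar y|^{2}\bigr)^{2}}\,\chi_{0}\!\left(\frac{|(r,z)-\xi(T)|}{2\sqrt{\delta(T-t)}}\right),
\]
already stated just before the lemma. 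The leading terms coming from $\frac{\dot\lambda}{\lambda}Z_0\chi_0$ and the time derivative of the cutoff are estimated exactly as in \cite{BDdPM} by means of \eqref{EstiLambda}--\eqref{EstiPar}. The genuinely new contribution is the last term of $\tilde E$, containing $\nabla_{(r,z)}v_0$; on the support of $\nabla_{\bar w}\chi_0$ one has $|(r,z)-\xi|\sim \sqrt{\delta(T-t)}$, so Proposition \ref{3dGRADIENTExpansion} gives $|\nabla_{(r,z)}v_0|\lesssim |\ln(T-t)|$, which combined with the factor $U(\bar y)\lesssim \lambda^4/(\lambda^2+|(r,z)-\xi|^2)^2$ fits into the required bound for $T$ small enough.

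Once the source has the same pointwise control as in the two-dimensional setting, I would invoke Duhamel's formula with the semigroup generated by the drift operator
\[
\mathcal{L} = \Delta_{(r,z)} + 4\,\frac{(r,z)}{|(r,z)|^{2}}\cdot \nabla_{(r,z)},
\]
which is symmetric with respect to the weight $|(r,z)|^{4}\,d(r,z)$ and whose fundamental solution satisfies sharp Gaussian-type bounds derived in \cite{BDdPM}. Multiplying by $E$ and integrating, the three regimes in the statement correspond to three distinct mechanisms: in the innermost region $|(r,z)-\xi(T)|\le\sqrt{T-t}$, the self-similar concentration of $E$ on scale $\lambda$ combined with the near-vanishing mean of $Z_0$ against $U$ produces the rate $\lambda^{-2}+|(r,z)-\xi(T)|^{-2}$, weighted by the factor $e^{-\sqrt{2|\ln(T-t)|}}$ inherited from the bound on $\dot\lambda/\lambda$ in \eqref{EstiPar}; in the intermediate range $\sqrt{T-t}\le|(r,z)-\xi(T)|\le 2\varepsilon(T)$, $\varphi^{(6)}_\lambda$ is essentially a multiple of the Green function of $\mathcal{L}$, yielding the power $|(r,z)-\xi(T)|^{-2}$ with the natural continuation of the logarithmic weight; and in the far region $|(r,z)-\xi(T)|\ge\sqrt{\varepsilon(T)}$, only the heat kernel at the initial time $-\varepsilon(T)$ survives, giving the Gaussian factor $e^{-|(r,z)-\xi(T)|^{2}/[4(t+2\varepsilon(T))]}$.

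The main obstacle is the careful matching between these three regimes and the extraction of the exponential factor $e^{-\sqrt{2|\ln(T-t)|}}$. This is exactly where the particular law \eqref{EstiPar} for $\dot\lambda$ enters: the leading term of $E$, namely $\frac{\dot\lambda}{\lambda}Z_0\chi_0$, has almost vanishing average, and this orthogonality is what avoids a logarithmic buildup and yields the stated pointwise rate rather than a worse $|\ln(T-t)|$ bound. All of this is carried out in \cite{BDdPM}, so in the present work the argument amounts to applying that analysis verbatim after having controlled the three-dimensional perturbation via Proposition \ref{3dGRADIENTExpansion}.

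The corresponding bounds on $\nabla\varphi^{(6)}_\lambda$ and $D^{2}\varphi^{(6)}_\lambda$ in the innermost region are then obtained by classical parabolic regularity applied to the rescaled equation on balls of radius proportional to $\lambda$, where the source $E$ is smooth and the drift coefficient is bounded; this gives the final $D^2$-estimate in the statement without further difficulty.
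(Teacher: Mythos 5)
Your proposal follows the paper's route exactly: the paper's entire proof consists of bounding the source $E$ pointwise (using \eqref{EstiLambda}, \eqref{EstiPar} and Proposition \ref{3dGRADIENTExpansion} for the new nonlocal term) and then citing Lemma 3.1 of \cite{BDdPM} verbatim, which is precisely your two-step reduction. One small inaccuracy: on the support of $\nabla_{\bar w}\chi_0$ Proposition \ref{3dGRADIENTExpansion} gives $|\nabla_{(r,z)}v_0|\lesssim |(r,z)-\xi|^{-1}\sim (T-t)^{-1/2}$ (the $\omega_1\nabla\Gamma_0$ term dominates the logarithm), not $|\ln(T-t)|$; this does not harm the argument, since the resulting contribution is then of the same size as the cutoff-derivative term already present in the two-dimensional source, and is absorbed in the same way.
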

\begin{proof}
	See the proof of Lemma 3.1 in \cite{BDdPM}.
\end{proof}
The approach in this three dimensional setting is to consider the ansatz
\begin{align*}
	u_{1} = u_{0} + \varphi_{\lambda}^{(6)} \hat{\chi} = u_{0} + \varphi_{\lambda},
\end{align*}
where $\hat{\chi} = \chi_{0} \left( \frac{|(r,z) - \xi|}{2 \sqrt{\delta(T - t)}} \right)$. We then compute
\begin{align*}
	S(u_{1}) = S_{(2d)}(u_{1}) + \frac{1}{r} \partial_{r} u_{0} + \frac{1}{r} \partial_{r} \varphi_{\lambda},
\end{align*}
where, letting $\rho = |(r,z) - \xi|$ and $w=\frac{(r,z)-\xi}{\sqrt{\delta(T-t)}}$, the two dimensional part is given by
\begin{align*}
	S_{(2d)}(u_{1}) &= -\frac{\dot{\alpha}}{\lambda^{2}} U(y) \chi_{0}(w) 
	+ (\alpha - 1) \frac{\dot{\lambda}}{\lambda^{3}} Z_{0}(y) \chi_{0}(w) 
	+ \frac{\alpha}{\lambda^{3}} \dot{\xi} \cdot \nabla_{y} U(y) \chi_{0}(w) \\
	&\quad + \frac{\alpha}{\lambda^{2} \sqrt{\delta(T - t)}} U(y) \dot{\xi} \cdot \nabla_{w} \chi_{0}(w)
	- \frac{\alpha - 1}{2 \lambda^{2} (T - t)} U(y) \nabla_{w} \chi_{0}(w) \cdot w \\
	&\quad + \frac{2(\alpha - 1)}{\lambda^{3} \sqrt{\delta(T - t)}} \nabla_{w} \chi_{0}(w) \cdot \nabla_{y} U(y)
	+ \frac{\alpha - 1}{\delta(T - t)} \frac{1}{\lambda^{2}} \Delta_{w} \chi_{0}(w) U(y) \\
	&\quad - \frac{1 - 1/\alpha}{\lambda^{2} \sqrt{\delta(T - t)}} U(y) \nabla_{w} \chi_{0}(w) \cdot \nabla_{(r,z)} v_{0} \\
	&\quad - \frac{\alpha(\alpha - 1) \chi_{0}(w)}{\lambda^{4}} \nabla_{y} \cdot (U \nabla_{y} \Gamma_{0}) 
	+ \frac{\alpha \chi_{0}(w)}{\lambda^{4}} \left[ \alpha(\chi_{0}(w) - 1) U^{2} - \nabla_{y} U \cdot \nabla_{y} \mathcal{R} \right] \\
	&\quad - \frac{\alpha - 1}{\lambda^{2} \sqrt{\delta(T - t)}} U(y) \nabla_{w} \chi_{0}(w) \cdot \nabla_{(r,z)} v_{0} \\
	&\quad - \frac{4}{\rho} \partial_{\rho} \varphi_{\lambda} 
	- \text{div}_{(r,z)}  (\varphi_{\lambda} \nabla_{(r,z)} v_{0}) 
	- \text{div}_{(r,z)}  (u_{0} \nabla_{(r,z)} \psi_{\lambda}) 
	- \text{div}_{(r,z)} (\varphi_{\lambda} \nabla_{(r,z)} \psi_{\lambda}) \\
	&\quad + E((r - z) - \xi, t) - E((r, z) - \xi(T), t).
\end{align*}
We recall the definitions:
\begin{align*}
	v_{0} = (-\Delta_{\mathbb{R}^{3}})^{-1} u_{0}, \quad 
	\psi_{\lambda} = (-\Delta_{\mathbb{R}^{3}})^{-1} \varphi_{\lambda}, \quad 
	\text{div}_{(r,z)}((A_{r}, A_{\theta}, A_{z})) = \frac{1}{r} \partial_{r}(r A_{r}) + \frac{1}{r} \partial_{\theta}(A_{\theta}) + \partial_{z} A_{z}.
\end{align*}
Before proceeding with error estimates, we must first control $\psi_{\lambda}$ in a manner analogous to our treatment of $v_{0}$.
\begin{proposition}\label{3dGRADIENTExpansionpsilambda}
	Assume that conditions \eqref{EstiLambda} and \eqref{EstiPar} hold. Consider the function $\psi_{\lambda}$ defined as the Newtonian potential associated to the following equation:
	\begin{align*}
		\left(-\Delta_{(r,z)} - \frac{1}{r} \partial_{r} \right) \psi_{\lambda} 
		= \varphi_{\lambda}^{(6)}\left(|(r,z) - \xi|\right) 
		\chi_{0} \left( \frac{|(r,z) - \xi|}{2\sqrt{\delta(T - t)}} \right), 
		\quad x = (r e^{i\theta}, z).
	\end{align*}
	Then, the gradient $\nabla_{(r,z)} \psi_{\lambda}$ satisfies the following pointwise estimate:
	\begin{align*}
		|\nabla_{(r,z)} \psi_{\lambda}| \lesssim e^{-\sqrt{2|\ln(T-t)|}}
		\begin{cases}
			\displaystyle\frac{1}{\lambda} 
			\frac{|\ln (1+\rho)|}{1 + \rho}, 
			& \text{if } |(r,z) - \xi| \le \sqrt{2\delta(T - t)}, \\[8pt]
			\displaystyle\frac{\sqrt{|\ln(T - t)|}}{|(r,z) - \xi|}, 
			& \text{if }   |(r,z) - \xi| \ge\sqrt{2\delta(T - t)} , \\
		\end{cases}
	\end{align*}
	where $\rho = \frac{ |(r,z) - \xi|}{\lambda}$.
\end{proposition}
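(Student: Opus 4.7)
The strategy mirrors that used in Theorem \ref{Expansionv0} and Proposition \ref{3dGRADIENTExpansion}. I will view $\psi_\lambda$ as an essentially two dimensional Newtonian potential of the source, supplemented by a three dimensional correction arising from the cylindrical term $\frac{1}{r}\partial_r$. Set $F(r,z,t) := \varphi_\lambda^{(6)}(|(r,z)-\xi|)\,\hat\chi(r,z,t)$. By Lemma \ref{estimatePhilambda}, $F$ is radial in $(r,z)-\xi$, compactly supported in $\{|(r,z)-\xi| \le 4\sqrt{\delta(T-t)}\}$, and controlled pointwise by $\frac{e^{-\sqrt{2|\ln(T-t)|}}}{\lambda^2 + |(r,z)-\xi|^2}$ in the inner regime (with a matching but weaker bound on the outer annulus up to the cut-off). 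I decompose $\psi_\lambda = \tilde\psi_\lambda + R_\lambda$, where $\tilde\psi_\lambda$ denotes the two dimensional Newtonian potential of $F$ viewed as a function of $(r,z) \in \mathbb{R}^2$, and the correction satisfies $(-\Delta_{(r,z)} - \frac{1}{r}\partial_r)R_\lambda = \frac{1}{r}\partial_r\tilde\psi_\lambda$.

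For the inner region $r' := |(r,z)-\xi| \le \sqrt{2\delta(T-t)}$, the plan is to exploit that $F$ is radial and use the ODE representation \eqref{NPvsODE}. Writing $\rho = r'/\lambda$, the radial gradient reduces to
\begin{equation*}
|\nabla_{(r,z)}\tilde\psi_\lambda(r,z)| \;=\; \frac{1}{r'}\left|\int_0^{r'} F(s,t)\,s\,ds\right| \;\lesssim\; \frac{e^{-\sqrt{2|\ln(T-t)|}}}{r'}\int_0^{r'}\frac{s\,ds}{\lambda^2 + s^2} \;\lesssim\; \frac{e^{-\sqrt{2|\ln(T-t)|}}}{\lambda}\,\frac{|\ln(1+\rho)|}{1+\rho},
\end{equation*}
which matches the claimed bound. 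For the 3D correction, I will apply Lemma \ref{Second3Dcorrection} to $g := \frac{1}{r}\partial_r\tilde\psi_\lambda$: on the support of $F$ we have $r \approx q_1 > 0$, and $g$ satisfies (after factoring out the prefactor $e^{-\sqrt{2|\ln(T-t)|}}$) a decay compatible with the hypothesis of that lemma, producing a contribution to $\nabla R_\lambda$ of order $e^{-\sqrt{2|\ln(T-t)|}}\sqrt{T-t}|\ln(T-t)|$ near $\xi$, which is lower order than the main term throughout the inner region.

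For the outer region $r' \ge \sqrt{2\delta(T-t)}$, I will return to the exact axisymmetric representation in Lemma \ref{ReprLemmaAS} and perform the same expansion of $K$ near its endpoint used in Theorem \ref{Expansionv0}. In the support of $F$ we have $(\rho,w) \approx (q_1,0)$, so \eqref{ExpansionK1} reduces $K$ to an explicit logarithm plus lower-order corrections, and the leading contribution mimics the 2D potential of $F$ with an explicit local weight. The resulting $\nabla\psi_\lambda$ is then bounded using \eqref{gradoutsupp} together with the total mass estimate
\begin{equation*}
\|F\|_{L^1(\mathbb{R}^3)} \;\lesssim\; q_1\, e^{-\sqrt{2|\ln(T-t)|}} \int_0^{C\sqrt{T-t}/\lambda} \frac{\rho\,d\rho}{1+\rho^2} \;\lesssim\; q_1\, e^{-\sqrt{2|\ln(T-t)|}}\,\sqrt{|\ln(T-t)|},
\end{equation*}
giving precisely $e^{-\sqrt{2|\ln(T-t)|}}\sqrt{|\ln(T-t)|}/r'$.

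The main obstacle will be the careful matching of the two estimates at the interface $r' \sim \sqrt{T-t}$, where the radial integral $\int_0^{r'} s\,ds/(\lambda^2+s^2)$ transitions from $O(\rho^2)$ to a genuine logarithm of size $\sim\sqrt{|\ln(T-t)|/2}$, and one must verify that the inner and outer bounds agree in order of magnitude across this crossover. A related technical point is that our $F$ decays like $(1+\rho)^{-2}$ rather than like the profile $(1+\rho)^{-1}$ in Lemma \ref{Second3Dcorrection}, so a straightforward rescaling and renormalization is required to make that lemma applicable; propagating the resulting control uniformly across the transitional annulus via the elliptic-integral expansion with careful tracking of the error terms is the most delicate bookkeeping step.
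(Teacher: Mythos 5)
Your proposal follows essentially the same route as the paper's own (very terse) proof: the inner bound via the explicit two-dimensional radial representation \eqref{NPvsODE} applied to the source controlled by Lemma \ref{estimatePhilambda}, the cylindrical correction $\tfrac{1}{r}\partial_r$ absorbed through (a rescaled version of) Lemma \ref{Second3Dcorrection}, and the outer bound via the elliptic-integral expansion and the $L^1$ mass estimate as in Theorem \ref{Expansionv0} and Proposition \ref{3dGRADIENTExpansion}. The computations you give are correct and in fact more detailed than the paper's sketch, and you correctly identify the transitional annulus and the logarithmic factor in the source as the only points needing extra care.
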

\begin{proof}
	The proof follows from the argument used in Theorem~\ref{Expansionv0} and Proposition~\ref{3dGRADIENTExpansion}. As in those cases, we start by examining the associated two dimensional problem:
	\begin{align*}
		-\Delta_{(r,z)} \psi_{\lambda}^{(2d)} = \varphi_{\lambda}^{(6)}\left(|(r,z) - \xi|\right) 
		\chi_{0} \left( \frac{|(r,z) - \xi|}{2\sqrt{\delta(T - t)}} \right).
	\end{align*}
	Letting $\rho = \frac{|(r,z) - \xi|}{\lambda}$ denote the rescaled radial variable, we can explicitly construct a solution as
	\begin{align*}
		\tilde{\psi}_{\lambda}^{(2d)}(\rho) 
		= -\int_{0}^{\rho} \frac{1}{u} \int_{0}^{u} \lambda^{2} \varphi_{\lambda}(s,t) \chi(s,t) \, s \, ds \, du 
		+ \int_{0}^{\frac{\delta \sqrt{T - t}}{\lambda}} \frac{1}{u} \int_{0}^{u} \lambda^{2} \varphi_{\lambda}(s,t) \chi(s,t) \, s \, ds \, du.
	\end{align*}
	The function $\tilde{\psi}_{\lambda}^{(2d)}(\rho)$ can be estimated using Lemma \ref{estimatePhilambda}. 
	We then define our first approximation by
	\begin{align*}
		\psi_{\lambda}^{(2d)}(\rho) 
		= \tilde{\psi}_{\lambda}^{(2d)}(\rho) 
		\chi_{0} \left( \rho  \frac{\lambda}{2\delta\sqrt{T - t}} \right).
	\end{align*}
	To obtain estimates away from the singularity, we proceed in the same manner as in Theorem \ref{Expansionv0} and Proposition \ref{3dGRADIENTExpansion}. For brevity, we omit the detailed computations.
\end{proof}
The following Lemma provides a bound for the new error term that arises after introducing the correction $\varphi_{\lambda}$.
\begin{lemma}\label{EstimateErr1}
	Assume that conditions \eqref{EstiLambda} and \eqref{EstiPar} hold. Then, for all $(r,z,t) \in \mathbb{R}^{+}\times \mathbb{R}\times (0,T)$, the following estimate holds:
	\begin{align*}
		|\lambda^{4} S(u_{1})(r,z,t)| 
		\le C\, e^{-\sqrt{2|\ln(T - t)|}} \frac{\ln(2 + |y|)}{1 + |y|^{6}} 
		\chi_{0} \left( \frac{|(r,z) - \xi|}{2\sqrt{\delta(T - t)}} \right), \ \ \ y=\frac{(r,z)-\xi}{\lambda(t)} .
	\end{align*}
\end{lemma}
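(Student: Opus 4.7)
The plan is to dissect $S(u_1)$ into four groups of terms, each handled by a dedicated tool. The first consists of the routine contributions involving $\dot\alpha$, $\dot\xi$, or $(\alpha-1)$ as factors: the corresponding upper bounds from \eqref{EstiPar} ensure each such factor is at most $e^{-(3/2-\gamma_2)\sqrt{2|\ln(T-t)|}}(T-t)^{-1}$, and the spatial part is controlled by the pointwise decay of $U$, $\nabla U$ or of cutoff derivatives supported on $|w|\in[1,2]$. Combining with \eqref{EstiLambda} to convert one power of $(T-t)^{-1}$ into $\lambda^{-2}e^{-\sqrt{2|\ln(T-t)|}}$, these contributions sit comfortably below the target after multiplication by $\lambda^4$.

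The second and most delicate group is the leading-order block $(\alpha-1)\frac{\dot\lambda}{\lambda^3}Z_0\chi_0$, together with the cutoff-correction pieces involving $\nabla_w\chi_0$ and the divergences $\operatorname{div}_{(r,z)}(\varphi_\lambda\nabla v_0) + \operatorname{div}_{(r,z)}(u_0\nabla\psi_\lambda)$. This block is precisely what equation \eqref{philambdaEqt2D} was engineered to cancel. The essential input here is Proposition \ref{3dGRADIENTExpansion}, which gives $\nabla_{(r,z)}v_0 = \nabla_{(r,z)}\Gamma_0 + O(1)$ in the inner region. Substituting this expansion into the divergence terms and using $-\Delta_y\Gamma_0 = U$ together with \eqref{def-u0-v0}, one recovers, modulo controllable remainders, the source $E(r,z,t)$ on the right-hand side of \eqref{philambdaEqt2D}; the Duhamel representation of $\varphi_\lambda^{(6)}$ then absorbs these contributions. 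The residual remainders come from the $O(1)$ discrepancy between $\nabla v_0$ and $\nabla\Gamma_0$, from the transport term $4(r,z)|(r,z)|^{-2}\cdot\nabla\varphi_\lambda^{(6)}$ introduced by the rewriting, and from the cross-term $\operatorname{div}(\varphi_\lambda\nabla\psi_\lambda)$; all are controlled via Lemma \ref{estimatePhilambda} and Proposition \ref{3dGRADIENTExpansionpsilambda}.

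The third group gathers the genuinely three-dimensional contributions $\frac{1}{r}\partial_r u_0$ and $\frac{1}{r}\partial_r\varphi_\lambda$: since $r \asymp q_1$ on the support of the cutoff, $\lambda^4\cdot\frac{1}{r}\partial_r u_0$ is of order $\lambda\cdot|\nabla_y U|$, and because $\lambda \ll e^{-\sqrt{2|\ln(T-t)|}}$ by \eqref{EstiLambda}, this is dominated by the right-hand side; the analogous bound for $\frac{1}{r}\partial_r\varphi_\lambda$ follows from Lemma \ref{estimatePhilambda}. Finally, the frozen-center tail $E((r,z)-\xi,t) - E((r,z)-\xi(T),t)$ is estimated by $|\xi(t)-\xi(T)|\cdot\|\nabla_\xi E\|_\infty$, with $|\xi(t)-\xi(T)| \le \int_t^T|\dot\xi|$ controlled via the second bound of \eqref{EstiPar}; this yields a strictly smaller contribution.

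The main obstacle is tracking the logarithmic factor $\ln(2+|y|)$ present in the stated bound. This factor is absent from any bare source term but emerges from the composition of the intermediate-region expansion $\nabla v_0 = \omega_1\nabla\Gamma_0 + O(\lvert\ln|(r,z)-\xi|\rvert)$ of Proposition \ref{3dGRADIENTExpansion} with the rescaling $y=((r,z)-\xi)/\lambda$; this logarithmic remainder, propagated through $\operatorname{div}(\varphi_\lambda\nabla v_0)$, is what dictates the precise shape of the estimate and distinguishes the three-dimensional analysis from its two-dimensional counterpart in \cite{BDdPM}.
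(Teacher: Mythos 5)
Your overall decomposition is sound and, in structure, matches the paper's argument: the paper disposes of the two-dimensional part by citing Lemma 3.3 of \cite{BDdPM} (justified by the fact that $\nabla_{(r,z)}v_0$ and $\nabla_{(r,z)}\psi_\lambda$ coincide locally with their two-dimensional counterparts up to the controlled remainders of Theorem \ref{Expansionv0}, Proposition \ref{3dGRADIENTExpansion} and Proposition \ref{3dGRADIENTExpansionpsilambda}), and then estimates only the genuinely new terms $\lambda^4\frac{1}{r}\partial_r u_0$ and $\lambda^4\frac{1}{r}\partial_r\varphi_\lambda$ exactly as in your third group. Reconstructing the two-dimensional bookkeeping from scratch, as you propose, is a legitimate and more self-contained route.

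There is, however, a genuine error in your final paragraph. The factor $\ln(2+|y|)$ is \emph{not} a three-dimensional artefact and does not come from the intermediate-region remainder $O(|\ln|(r,z)-\xi||)$ of Proposition \ref{3dGRADIENTExpansion}: that remainder only lives where $|y|\gtrsim\sqrt{\delta(T-t)}/\lambda\gg 1$, where the weight $(1+|y|)^{-6}$ is of size $e^{-3\sqrt{2|\ln(T-t)|}}$ and absorbs a full factor $|\ln(T-t)|$ with room to spare (note also that there $|\ln|(r,z)-\xi||\sim|\ln(T-t)|$, which is much larger than $\ln(2+|y|)\sim\sqrt{|\ln(T-t)|}$, so it could not reproduce the stated shape anyway). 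The logarithm is already present in the two-dimensional estimate of \cite{BDdPM}; it is saturated in the core region by $\operatorname{div}_{(r,z)}(u_0\nabla_{(r,z)}\psi_\lambda)$, because the (two-dimensional) potential of $\varphi_\lambda$ satisfies $|\nabla_{(r,z)}\psi_\lambda|\lesssim e^{-\sqrt{2|\ln(T-t)|}}\lambda^{-1}\ln(1+|y|)/(1+|y|)$ by Proposition \ref{3dGRADIENTExpansionpsilambda}, and pairing this with $|\nabla_{(r,z)}u_0|\sim\lambda^{-3}(1+|y|)^{-5}$ gives precisely $\lambda^{-4}e^{-\sqrt{2|\ln(T-t)|}}\ln(2+|y|)(1+|y|)^{-6}$. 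If you justify the logarithm the way you describe, the sharpest part of the estimate is left unproved. A smaller slip: equation \eqref{philambdaEqt2D} is engineered to cancel $\frac{\dot\lambda}{\lambda^3}Z_0\chi$ with full coefficient, and $(\alpha-1)\frac{\dot\lambda}{\lambda^3}Z_0\chi$ is the \emph{residual}, handled by the smallness of $\alpha-1$ deduced from \eqref{EstiPar}; listing it both as a first-group term and as the object cancelled by $\varphi_\lambda$ is inconsistent.
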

\begin{proof}
	The main difference compared to the proof of Lemma 3.3 in \cite{BDdPM} is that the current setting involves localization, as $\varphi_{\lambda}$ has been cut off. However, all computations from the original argument adapt easily to this case, since $\nabla_{(r,z)}v_{0}$ and $\nabla_{(r,z)} \psi_{\lambda}$ remain locally the same functions as in the two-dimensional situation. \newline
	The only additional terms that arise are
	\begin{align*}
		\lambda^{4} \left( \frac{1}{r} \partial_{r} u_{0} + \frac{1}{r} \partial_{r} \varphi_{\lambda} \right).
	\end{align*}
	We estimate each of these terms as follows:
	\begin{align*}
		\left| \lambda^{4} \frac{1}{r} \partial_{r} u_{0} \right| 
		&\lesssim  \lambda \frac{1}{1 + |y|^{5}} \chi 
		\leq \lambda e^{\sqrt{2|\ln(T - t)|}} \frac{1}{1 + |y|^{6}} \ll 
		e^{-\sqrt{2|\ln(T - t)|}} \frac{\ln(2 + |y|)}{1 + |y|^{6}} 
		\chi_{0} \left( \frac{|(r,z) - \xi|}{2\sqrt{\delta(T - t)}} \right),
	\end{align*}
	and similarly,
	\begin{align*}
		\left| \lambda^{4} \frac{1}{r} \partial_{r} \varphi_{\lambda} \right| 
		&\lesssim \lambda e^{-2\sqrt{2|\ln(T - t)|}} \frac{1}{1 + |y|^{3}} \ll 
		e^{-\sqrt{2|\ln(T - t)|}} \frac{\ln(2 + |y|)}{1 + |y|^{6}} 
		\chi_{0} \left( \frac{|(r,z) - \xi|}{2\sqrt{\delta(T - t)}} \right).
	\end{align*}
\end{proof} 
\section{The inner-outer gluing system}\label{innerouterSec}
We aim to construct a solution to the Keller-Segel system \eqref{KS3d} as a small perturbation of $u_{1} = u_{0} + \varphi_{\lambda}$. Specifically, we express it as
\begin{align}\label{Solu}
	u=	u_{1} + \Phi = u_{0} + \varphi_{\lambda} + \Phi = u_{0} + \varphi_{\lambda} + \frac{1}{\lambda^{2}} \phi^{i} \chi + \varphi^{o},
\end{align}
where $\chi = \chi_{0}\left(\frac{|(r,z)-\xi(t)|}{\sqrt{\delta(T-t)}}\right)$. Recalling the definition of the error operator \eqref{erroroperator}, our objective is to determine $\Phi$ such that
\begin{align*}
	S(u_{1} + \Phi) = 0.
\end{align*}
Letting $\Psi(r,z) = (-\Delta_{\mathbb{R}^{3}})(\Phi(r,z))$, we compute
\begin{align*}
	S(u_{1} + \Phi) = S(u_{1}) - \partial_{t}\left(\frac{1}{\lambda^{2}} \phi^{i} \chi \right) - \partial_{t} \varphi^{o} + \mathcal{L}_{u_{1}}\left[\frac{1}{\lambda^{2}} \phi^{i} \chi \right] + \mathcal{L}_{u_{1}}[\varphi^{o}] - \text{div}_{(r,z)}(\Phi \nabla_{(r,z)} \Psi),
\end{align*}
where the linearized operator $\mathcal{L}_{u_{1}}$ is defined as
\begin{align*}
	\mathcal{L}_{u_{1}}[\phi] = \Delta_{(r,z)} \phi + \frac{1}{r} \partial_{r} \phi - \text{div}_{(r,z)}(\phi \nabla_{(r,z)} v_{1}) - \text{div}_{(r,z)}(u_{1} \nabla_{(r,z)} \psi), \quad \psi(r,z) = (-\Delta_{\mathbb{R}^{3}})(\phi(r,z)).
\end{align*}
In what follows, we introduce the following notation:
\begin{align*}
	v_{0} &= \alpha \Gamma_{0} + \mathcal{R}, \ \
	v_{0} =  (-\Delta_{\mathbb{R}^{3}})^{-1} (\frac{\alpha}{\lambda^{2}}U \chi), \ \
	\psi_{\lambda} = (-\Delta_{\mathbb{R}^{3}})^{-1} \varphi_{\lambda}.
\end{align*}
We now expand
\begin{align*}
	\mathcal{L}_{u_{1}}\left[\frac{1}{\lambda^{2}} \phi^{i} \chi \right] =\;& \chi \frac{1}{\lambda^{2}} \Delta_{(r,z)} \phi^{i} + \frac{2}{\lambda^{2}} \nabla_{(r,z)} \chi \cdot \nabla_{(r,z)} \phi^{i} + \frac{1}{\lambda^{2}} \phi^{i} \Delta_{(r,z)} \chi \\
	&+ \frac{1}{\lambda^{2}} \frac{1}{r} \partial_{r}(\phi^{i} \chi) - \text{div}_{(r,z)}\left(\frac{1}{\lambda^{2}} \phi^{i} \chi \nabla_{(r,z)} v_{1}\right) - \text{div}_{(r,z)}(u_{1} \nabla_{(r,z)} \psi^{i}),
\end{align*}
and, recalling the definition of $u_0$ and $v_0$ in \eqref{def-u0-v0}, evaluate
\begin{align*}
	\text{div}_{(r,z)}\left(\frac{1}{\lambda^{2}} \phi^{i} \chi \nabla_{(r,z)} v_{1}\right) =\;& \text{div}_{(r,z)}\left(\frac{1}{\lambda^{2}} \phi^{i} \nabla_{(r,z)} v_{0}\right)\chi + \text{div}_{(r,z)}\left(\frac{1}{\lambda^{2}} \phi^{i} \nabla_{(r,z)} \psi_{\lambda} \right)\chi \\
	&+ \frac{1}{\lambda^{2}} \phi^{i} \nabla_{(r,z)} v_{1} \cdot \nabla_{(r,z)} \chi \\
	=\;& \nabla_{(r,z)}\left(\frac{1}{\lambda^{2}} \phi^{i}\right) \cdot \nabla_{(r,z)} v_{0} \chi + \text{div}_{(r,z)}\left(\frac{1}{\lambda^{2}} \phi^{i} \nabla_{(r,z)} \psi_{\lambda}\right)\chi \\
	&+ \frac{1}{\lambda^{2}} \phi^{i} \nabla_{(r,z)} v_{1} \cdot \nabla_{(r,z)} \chi - \frac{1}{\lambda^{2}} \phi^{i} u_{0} \chi \\
	=\;& \nabla_{(r,z)}\left(\frac{1}{\lambda^{2}} \phi^{i}\right) \cdot \nabla_{(r,z)}(\alpha \Gamma_{0}) \chi + \text{div}_{(r,z)}\left(\frac{1}{\lambda^{2}} \phi^{i} \nabla_{(r,z)} \psi_{\lambda}\right)\chi \\
	&+ \frac{1}{\lambda^{2}} \phi^{i} \nabla_{(r,z)} v_{1} \cdot \nabla_{(r,z)} \chi - \frac{1}{\lambda^{2}} \phi^{i} u_{0} \chi + \nabla_{(r,z)}\left(\frac{1}{\lambda^{2}} \phi^{i}\right) \cdot \nabla_{(r,z)} \mathcal{R} \chi \\
	=\;& \nabla_{(r,z)}\left(\frac{1}{\lambda^{2}} \phi^{i}\right) \cdot \nabla_{(r,z)} \Gamma_{0} \chi + \text{div}_{(r,z)}\left(\frac{1}{\lambda^{2}} \phi^{i} \nabla_{(r,z)} \psi_{\lambda}\right)\chi \\
	&+ \frac{1}{\lambda^{2}} \phi^{i} \nabla_{(r,z)} v_{1} \cdot \nabla_{(r,z)} \chi - \frac{1}{\lambda^{4}} \phi^{i} U \chi + \nabla_{(r,z)}\left(\frac{1}{\lambda^{2}} \phi^{i}\right) \cdot \nabla_{(r,z)} \mathcal{R} \chi \\
	&+ (\alpha - 1) \nabla_{(r,z)}\left(\frac{1}{\lambda^{2}} \phi^{i}\right) \cdot \nabla_{(r,z)} \Gamma_{0} \chi - \frac{\alpha - 1}{\lambda^{4}} \phi^{i} U \chi^{2}+\frac{1}{\lambda^{4}}\phi^{i}U\chi(1-\chi).
\end{align*}
Before carrying out the expansion, we introduce the following notation:
\begin{align}\label{2dInvLapInn}
	\hat{\psi}(r,z) = (-\Delta_{(r,z)})^{-1} \left(\frac{1}{\lambda^{2}} \phi^{i}(r,z)\right), \quad (r,z) \in \mathbb{R}^{2},
\end{align}
which is natural since the inner solution $\phi^{i}$ is defined in $\mathbb{R}^2$.
We then have 
\begin{align*}
	\text{div}_{(r,z)}(u_{1} \nabla_{(r,z)} \psi^{i}) =\;& \nabla_{(r,z)} u_{1} \cdot \nabla_{(r,z)} \psi^{i} - u_{1} \frac{1}{\lambda^{2}} \phi^{i} \chi \\
	=\;& \nabla_{(r,z)} u_{0} \cdot \nabla_{(r,z)} \hat{\psi} + \nabla_{(r,z)} u_{0} \cdot \nabla_{(r,z)}(\psi^{i} - \hat{\psi}) + \nabla_{(r,z)} \varphi_{\lambda} \cdot \nabla_{(r,z)} \psi^{i} \\
	& - u_{0} \frac{1}{\lambda^{2}} \phi^{i} \chi - \varphi_{\lambda} \frac{1}{\lambda^{2}} \phi^{i} \chi \\
	=\;& \frac{\alpha}{\lambda^{2}} \nabla_{(r,z)} U \cdot \nabla_{(r,z)} \hat{\psi} \chi + \frac{\alpha}{\lambda^{2}} U \nabla_{(r,z)} \chi \cdot \nabla_{(r,z)} \hat{\psi} + \nabla_{(r,z)} u_{0} \cdot \nabla_{(r,z)} (\psi^{i} - \hat{\psi}) \\
	& + \nabla_{(r,z)} \varphi_{\lambda} \cdot \nabla_{(r,z)} \psi^{i} - u_{0} \frac{1}{\lambda^{2}} \phi^{i} \chi - \varphi_{\lambda} \frac{1}{\lambda^{2}} \phi^{i} \chi \\
	=\;& \frac{1}{\lambda^{2}} \nabla_{(r,z)} U \cdot \nabla_{(r,z)} \hat{\psi} \chi + \frac{\alpha - 1}{\lambda^{2}} \nabla_{(r,z)} U \cdot \nabla_{(r,z)} \hat{\psi} \chi - \frac{\alpha}{\lambda^{4}} U \phi^{i} \chi^{2} \\
	& + \frac{\alpha}{\lambda^{2}} U \nabla_{(r,z)} \chi \cdot \nabla_{(r,z)} \hat{\psi} + \nabla_{(r,z)} u_{0} \cdot \nabla_{(r,z)} (\psi^{i} - \hat{\psi}) + \nabla_{(r,z)} \varphi_{\lambda} \cdot \nabla_{(r,z)} \psi^{i} - \varphi_{\lambda} \frac{1}{\lambda^{2}} \phi^{i} \chi \\
	=\;& \frac{1}{\lambda^{2}} \nabla_{(r,z)} U \cdot \nabla_{(r,z)} \hat{\psi} \chi - \frac{1}{\lambda^{4}} U \phi^{i} \chi - \frac{\alpha - 1}{\lambda^{4}} U \phi^{i} \chi^{2} - \frac{1}{\lambda^{4}} U \phi^{i} \chi (\chi - 1) \\
	& + \frac{\alpha - 1}{\lambda^{2}} \nabla_{(r,z)} U \cdot \nabla_{(r,z)} \hat{\psi} \chi + \frac{\alpha}{\lambda^{2}} U \nabla_{(r,z)} \chi \cdot \nabla_{(r,z)} \hat{\psi} + \nabla_{(r,z)} u_{0} \cdot \nabla_{(r,z)} (\psi^{i} - \hat{\psi}) \\
	& + \nabla_{(r,z)} \varphi_{\lambda} \cdot \nabla_{(r,z)} \psi^{i} - \varphi_{\lambda} \frac{1}{\lambda^{2}} \phi^{i} \chi.
\end{align*}
Lastly, we note that
\begin{align*}
	\partial_{t}\left(\frac{1}{\lambda^{2}}\phi^{i} \chi\right) &= \frac{1}{\lambda^{2}} \phi^{i} \partial_{t} \chi + \frac{1}{\lambda^{2}} \partial_{t} \phi^{i} \chi - \frac{2 \dot{\lambda}}{\lambda^{3}} \phi^{i} \chi + \frac{1}{\lambda^{2}} \nabla_{y} \phi^{i} \cdot \partial_{t} y \, \chi \\
	&= \frac{1}{\lambda^{2}} \partial_{t} \phi^{i} \chi - \frac{\lambda \dot{\lambda}}{\lambda^{4}} \bigl[ 2 \phi^{i} + y \cdot \nabla_{y} \phi^{i} \bigr]\chi - \frac{1}{\lambda^{3}} \nabla_{y} \phi^{i} \cdot \dot{\xi} \chi + \frac{1}{\lambda^{2}} \phi^{i} \partial_{t} \chi,
\end{align*}
where $y={(r,z)-\xi \over \lambda}$. We now turn to the analysis of the operator
\begin{align*}
	\mathcal{L}_{u_{1}}[\varphi^{o}] = \Delta_{(r,z)} \varphi^{o} + \frac{1}{r} \partial_{r} \varphi^{o} - \text{div}_{(r,z)} \bigl(\varphi^{o} \nabla_{(r,z)} v_{1} \bigr) - \text{div}_{(r,z)} \bigl(u_{1} \nabla_{(r,z)} \psi^{o} \bigr).
\end{align*}
Observe that
\begin{align*}
	\text{div}_{(r,z)} \bigl(\varphi^{o} \nabla_{(r,z)} v_{1}\bigr) &= \nabla_{(r,z)} \varphi^{o} \cdot \nabla_{(r,z)} v_{1} - u_{1} \varphi^{o} \\
	&= \nabla_{(r,z)} \varphi^{o} \cdot \nabla_{(r,z)} v_{0} + \nabla_{(r,z)} \varphi^{o} \cdot \nabla_{(r,z)} \psi_{\lambda} - u_{1} \varphi^{o},
\end{align*}
and also
\begin{align*}
	\text{div}_{(r,z)} \bigl(u_{1} \nabla_{(r,z)} \psi^{o}\bigr) = \nabla_{(r,z)} u_{1} \cdot \nabla_{(r,z)} \psi^{o} - u_{1} \varphi^{o}.
\end{align*}
We now define the inner equation, which governs all the terms multiplied by the cut-off function:
\begin{equation}\label{innerEq0}
\begin{aligned}
	\frac{1}{\lambda^{2}} \partial_{t} \phi^{i} \chi 
	&= \frac{1}{\lambda^{2}} \Delta_{(r,z)} \phi^{i} \chi
	+ \frac{1}{\lambda^{4}} B[\phi^{i}] \chi 
	-   \nabla_{(r,z)}\left( \frac{1}{\lambda^{2}} \phi^{i} \right) \cdot \nabla_{(r,z)} \Gamma_{0} \chi 
	\\
    &- \frac{1}{\lambda^{2}} \nabla_{(r,z)} U \cdot \nabla_{(r,z)} \hat{\psi} \chi 
	+ \frac{2}{\lambda^{4}} U \phi^{i} \chi 
	+ E(r,z,t),
\end{aligned}
\end{equation}
where
\begin{align}\label{OperatorB}
	B[\phi^{i}] = \lambda \dot{\lambda} \left( 2 \phi^{i} + y \cdot \nabla_{y} \phi^{i} \right),
\end{align}
and the error term \( E(r,z,t) \) is given by
\begin{align*}
	E(r,z,t) = \ & S(u_{1}) \chi 
	+ \frac{1}{r} \partial_{r} \left( \frac{1}{\lambda^{2}} \phi^{i} \right) \chi 
	+ \frac{1}{\lambda^{3}} \nabla_{y} \phi^{i} \cdot \dot{\xi} \chi 
	- \frac{\alpha - 1}{\lambda^{4}} U \phi^{i} \chi 
	+ 2 \frac{\alpha - 1}{\lambda^{4}} \phi^{i} U \chi 
	+ \frac{\alpha - 1}{\lambda^{4}} U \phi^{i} \chi^{2} \\
	& -(\alpha - 1) \nabla_{(r,z)} \left( \frac{1}{\lambda^{2}} \phi^{i} \right) \cdot \nabla_{(r,z)} \Gamma_{0} \chi-\nabla_{(r,z)}u_{0}\cdot \nabla_{(r,z)}(\psi^{i}-\hat{\psi})\chi \\
	& + \left( - \nabla_{(r,z)} \varphi^{o} \cdot \nabla_{(r,z)} \psi_{\lambda} + u_{1} \varphi^{o} \right) \chi 
	+ \left( - \nabla_{(r,z)} u_{1} \cdot \nabla_{(r,z)} \psi^{o} + u_{1} \varphi^{o} \right) \chi \\
	& + \left( - \nabla_{(r,z)} \varphi_{\lambda} \cdot \nabla_{(r,z)} \psi^{i} + \varphi_{\lambda} \frac{1}{\lambda^{2}} \phi^{i} \chi \right) \chi 
	- \text{div}_{(r,z)} \left( \frac{1}{\lambda^{2}} \phi^{i} \nabla_{(r,z)} \psi_{\lambda} \right) \chi \\
	& - \text{div}_{(r,z)} \left( \left( \frac{1}{\lambda^{2}} \phi^{i} \chi + \varphi^{o} \right) \nabla_{(r,z)} (\psi^{i} + \psi^{o}) \right) \chi.
\end{align*}
By collecting all the remaining terms, we obtain the outer problem:
\begin{align}\label{outerEq0}
	\partial_{t} \varphi^{o} = \Delta_{(r,z)} \varphi^{o} + \frac{1}{r} \partial_{r} \varphi^{o} - \nabla_{(r,z)} \varphi^{o} \cdot \nabla_{(r,z)} v_{0} + F(r,z,t)
\end{align}
where
\begin{align*}
	F(r,z,t)=&S(u_{1})(1-\chi)+\frac{1}{\lambda^{2}}\phi^{i}\partial_{r}\chi\\
	&+\frac{2}{\lambda^{2}}\nabla_{(r,z)}\chi \cdot \nabla_{(r,z)}\phi^{i}+\frac{1}{\lambda^{2}}\phi^{i}\Delta_{(r,z)}\chi-\frac{1}{\lambda^{2}}\phi^{i}\nabla_{(r,z)}v_{1}\cdot \nabla_{(r,z)}\chi +\\
	&-\nabla_{(r,z)}(\frac{1}{\lambda^{2}}\phi)\cdot \nabla_{(r,z)}\mathcal{R}\chi+\frac{2}{\lambda^{4}}U\phi^{i}\chi(\chi-1)-\frac{\alpha}{\lambda^{2}}U\nabla_{(r,z)}\chi \cdot \nabla_{(r,z)}\hat{\psi}-\\
	&-\nabla_{(r,z)}u_{0}\cdot \nabla_{(r,z)}(\psi^{i}-\hat{\psi})(1-\chi)+(-\nabla_{(r,z)}\varphi_{\lambda}\cdot \nabla_{(r,z)}\psi^{i}+\varphi_{\lambda}\frac{1}{\lambda^{2}}\phi^{i}\chi)(1-\chi)-\\
	&-\frac{1}{\lambda^{2}}\phi^{i}\partial_{t}\chi+(-\nabla_{(r,z)}\varphi^{o}\cdot \nabla_{(r,z)}\psi_{\lambda}+u_{1}\varphi^{o})(1-\chi)+(-\nabla_{(r,z)}u_{1}\cdot \nabla_{(r,z)}\psi^{o}+u_{1}\varphi^{o})(1-\chi)-\\
	&-\text{div}_{(r,z)}((\frac{1}{\lambda^{2}}\phi^{i}\chi+\varphi^{o})\nabla_{(r,z)}(\psi^{i}+\psi^{o}))(1-\chi).
\end{align*}
The inner equation \eqref{innerEq0} can be slightly reformulated as
\begin{align}\label{innerEq1}
	\lambda^{2}\partial_{t}\phi^{i}\chi = L[\phi^{i}]\chi + B[\phi^{i}]\chi + \hat{E}_{1}\tilde{\chi} + \hat{E}_{2}[\phi^{i},\varphi^{o},\mathbf{p}]\,\tilde{\chi},
\end{align}
where the operator $L$ is given by
\begin{align}\label{OperatorL}
	L[\phi] = \Delta_{y}\phi - \nabla_{y}\cdot(U \nabla \psi) - \nabla_{y}\cdot(\phi \nabla \Gamma_0), \quad \psi = (-\Delta_{\mathbb{R}^{2}})^{-1}(\phi), \quad y = \frac{(r,z)-\xi(t)}{\lambda},
\end{align}
and we introduced the cut-off function
\begin{align*}
	\tilde{\chi} = \chi_{0}\left(\frac{|(r,z)-\xi(t)|}{2\sqrt{\delta(T-t)}}\right),
\end{align*}
with $\chi_0$ defined in \eqref{cutoff}. The operator $B$ is defined in \eqref{OperatorB}, and we denote
\begin{align}\label{E1andP}
	\hat{E}_{1}(y,t) = \lambda^{4} S(u_{1}(\mathbf{p})) \chi, \qquad \mathbf{p} = (\lambda, \alpha, \xi).
\end{align}
It is straightforward to verify that if $\phi^{i}$ and $\varphi^{o}$ are solutions of \eqref{innerEq1} and \eqref{outerEq0}, respectively, then the function $u$ defined by \eqref{Solu} satisfies \eqref{KS3d}. To solve this system certain conditions are required  and this is precisely the role of the parameter $\mathbf{p}$ introduced in \eqref{E1andP}. The initial conditions, as well as further modifications to equations \eqref{innerEq1} and \eqref{outerEq0}, will be discussed in the next Sections. 
\subsection{The choice of $\alpha_{0}$, $\lambda_{0}$, $\xi_{0}$}\label{Sectionalpha0lambda0}
In this Section we introduce the first approximation of the parameters $\alpha$, $\lambda$, and $\xi$ in the context of the elliptic equation
\begin{align}\label{ellipticEq}
	L[\phi^{i}] = -\lambda^{4} S(u_{1}(\mathbf{p})) \tilde{\chi}, \quad \text{in } \mathbb{R}^{2},
\end{align}
where $\mathbf{p}$ is defined as in \eqref{E1andP}, and $L$ is the operator introduced in \eqref{OperatorL}. We also use the cut-off function $\tilde{\chi}$ defined earlier. 
We observe that the operator $L$ is invariant under dilations and translations. As will be shown in Section~\ref{lastimprovsolSec}, it will be necessary to introduce a final refinement to the ansatz $u_{1}$ in order to eliminate the leading order term on the right-hand side of \eqref{innerEq1}.
We now recall the statement of Lemma 4.1 in \cite{BDdPM} (also Lemma 5.1 in \cite{DdPDMW}), where the interested reader can find the proof.
\begin{lemma}\label{Lemma51}
	Let $h(y)$ be a radial function satisfying
	\begin{align*}
		\|(1+|y|)^{\gamma} h(y)\|_{L^{\infty}(\mathbb{R}^{2})} < \infty,
	\end{align*}
	for some $\gamma > 4$, and
	\begin{align}\label{zeromassCondLemma51}
		\int_{\mathbb{R}^{2}} h(y) \, dy = 0,
	\end{align}
	\begin{align}\label{zerosecmomCondLemma51} \int_{\mathbb{R}^{2}} h(y) |y|^{2} \, dy = 0.
	\end{align}
	Then there exists a radial solution $\phi(y)$ to the equation
	\begin{align*}
		L[\phi] = h \quad \text{in } \mathbb{R}^{2},
	\end{align*}
	such that
	\begin{align*}
		|\phi(y)| \leq C \left\|(1+|y|)^{\gamma} h(y) \right\|_{L^{\infty}(\mathbb{R}^{2})} \cdot \frac{1}{(1+|y|)^{\gamma-2}}, \quad \text{if } \gamma \neq 6,
	\end{align*}
	and
	\begin{align*}
		|\phi(y)| \leq C \left\|(1+|y|)^{\gamma} h(y) \right\|_{L^{\infty}(\mathbb{R}^{2})} \cdot \frac{\log(2+|y|)}{1+|y|^{4}}, \quad \text{if } \gamma = 6,
	\end{align*}
	with the additional condition
	\begin{align}\label{zeromassLemma51}
		\int_{\mathbb{R}^{2}} \phi(y) \, dy = 0.
	\end{align}
\end{lemma}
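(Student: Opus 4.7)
The plan is to exploit the radial structure to reduce the PDE $L[\phi]=h$ to a second-order linear ODE, solve it by variation of parameters, and then read off the decay of the solution from the two orthogonality hypotheses. First, setting $r=|y|$ and writing $\phi=\phi(r)$, I would introduce the cumulative mass $m(r)=\int_0^r \phi(s)s\,ds$. Since $(-\Delta_{\mathbb{R}^2})^{-1}$ acts on a radial function by $-r\psi'(r)=m(r)$, and since the operator $L$ is in divergence form $L[\phi]=\mathrm{div}_y(\nabla\phi-U\nabla\psi-\phi\nabla\Gamma_0)$, integrating the equation $L[\phi]=h$ over the ball $B_r$ yields the first-order identity
\begin{equation*}
r\phi'(r)+U(r)m(r)-r\phi(r)\Gamma_0'(r)=H(r),\qquad H(r):=\int_0^r h(s)s\,ds.
\end{equation*}
Using $r\phi=m'$ and the explicit forms $U(r)=8(1+r^2)^{-2}$, $\Gamma_0'(r)=-4r/(1+r^2)$, this becomes a second-order linear ODE for $m(r)$ on $(0,\infty)$ with regular singular point at the origin.

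The second step is to identify the homogeneous solutions. The dilation invariance of the stationary Keller–Segel equation provides $Z_0=2U+y\cdot\nabla U$ in the kernel of $L$, which translates into a first explicit homogeneous solution $m_0(r)=\int_0^r Z_0(s)s\,ds$; a direct computation shows $m_0$ is bounded and $m_0(\infty)=\int_{\mathbb{R}^2}Z_0/(2\pi)=0$. A second linearly independent solution $m_1$ is then recovered via the Wronskian formula, and one verifies that $m_1$ grows polynomially at infinity (with a controlled rate, essentially like $r^2$). With these two solutions in hand, I would write the general solution by variation of parameters in the form
\begin{equation*}
m(r)=c_0\,m_0(r)+c_1\,m_1(r)+m_0(r)\int_r^\infty H(s)K_1(s)\,ds+m_1(r)\int_0^r H(s)K_0(s)\,ds,
\end{equation*}
where $K_0,K_1$ are explicit weights built from $m_0,m_1$ and the Wronskian.

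The third step is where the two hypotheses \eqref{zeromassCondLemma51}–\eqref{zerosecmomCondLemma51} come in. Under the decay assumption $|h(y)|\lesssim(1+|y|)^{-\gamma}$ with $\gamma>4$, one has $H(r)-H(\infty)=O(r^{2-\gamma})$. The hypothesis $\int h=0$ forces $H(\infty)=0$, which is exactly what is needed so that the coefficient of the growing mode $m_1$ picked up by the inner variation-of-parameters integral can be cancelled; the hypothesis $\int|y|^2 h=0$ (equivalently, after an integration by parts, $\int_0^\infty H(s)s\,ds=0$) is the extra orthogonality needed to eliminate the next-order resonant term that would otherwise prevent the decay beyond $r^{-2}$. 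Choosing $c_0,c_1$ to guarantee regularity at the origin and $m(\infty)=0$ (the latter giving the normalization \eqref{zeromassLemma51}), one then estimates the resulting integrals directly using the pointwise bound on $H$ to conclude $|m(r)|\lesssim(1+r)^{-(\gamma-4)}$ (with a logarithmic factor when $\gamma=6$), and hence $|\phi(r)|=|m'(r)|/r$ satisfies the stated pointwise bound.

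The main obstacle I expect is the critical exponent $\gamma=6$: this is precisely the resonance between the decay rate of $h$ and the growth rate of the second homogeneous mode $m_1$, which produces the $\log(2+|y|)$ correction. Handling this case requires computing the variation-of-parameters integrals carefully and tracking how the two orthogonality conditions cancel the leading and subleading logarithmic divergences; the noncritical cases $\gamma\neq 6$ follow from the same argument with clean power counting. Since this is a purely two-dimensional radial statement that does not interact with the three-dimensional features of the current paper, I would in fact refer to the detailed proofs in \cite{BDdPM} (Lemma 4.1) and \cite{DdPDMW} (Lemma 5.1), where exactly this scheme is carried out.
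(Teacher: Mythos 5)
Your proposal is correct and follows essentially the same route as the paper, which simply defers to Lemma 5.1 of \cite{DdPDMW}: there the proof is exactly this reduction of the radial equation to an ODE for the cumulative mass $m(r)$ via integration of the divergence-form operator over balls, with $Z_0$ supplying the decaying homogeneous mode, variation of parameters, and the zero-mass and zero-second-moment conditions eliminating the non-decaying contributions (the second moment condition being equivalent, as you note, to $\int_0^\infty H(s)s\,ds=0$). Your identification of $\gamma=6$ as the resonant exponent producing the logarithm is also consistent with the statement.
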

\begin{proof}
	See the proof of Lemma 5.1 in \cite{DdPDMW}.
\end{proof}
We remark that condition \eqref{zerosecmomCondLemma51} is crucial in order to obtain \eqref{zeromassLemma51}. Indeed, it can be shown that the mass of the solution is proportional to the second moment of the right-hand side. 
Later we will show that, at leading order, $\mathbf{p} \approx \mathbf{p}_{0}= (\lambda_{0}, \alpha_{0}, (q_{1}(T),0))$ for some parameters $\lambda_{0}$ and $\alpha_{0}$ satisfying \eqref{EstiLambda} and \eqref{EstiPar}.
In order to cancel the dominant terms on the right-hand side of \eqref{innerEq1}, we aim to apply Lemma~\ref{Lemma51} to solve the elliptic equation
\begin{align}\label{EllipticEq0}
	L[\phi^{i}] = -\lambda_{0}^{4} S(u_{1}(\mathbf{p}_{0})) \tilde{\chi}, \quad \text{in } \mathbb{R}^{2},
\end{align}
where we use the cut-off function 
\begin{align}\label{cutoff1}
	\tilde{\chi} = \chi_{0}\left( \frac{|(r, z) - (q_{1}(T), 0)|}{2\sqrt{\delta(T - t)}} \right).
\end{align}
The following result will allow us to choose $\alpha_{0}$ and $\lambda_{0}$ such that, for all $t \in (0, T)$,
\begin{align}\label{zeromassErr1}
	\int_{\mathbb{R}^{2}} S(u_{1}(\mathbf{p}_{0})) \tilde{\chi} \, dr\,dz = 0,
\end{align}
and for some $\sigma > 0$,
\begin{align}\label{smallSecMomErr1}
	\int_{\mathbb{R}^{2}} S(u_{1}(\mathbf{p}_{0})) |(r, z) - (q_{1}(T), 0)|^{2} \, dr\,dz = O\left(e^{-(\frac{3}{2} + \sigma)\sqrt{2|\ln(T - t)|}} \right).
\end{align}
Notice that in \eqref{smallSecMomErr1} we do not impose the vanishing of the second moment. Instead, we aim to make it sufficiently small, as we can only solve the corresponding equation up to a controlled error. For this reason, in Section \ref{finalinneroutSec}, we will project the right-hand side, introducing a small error that is well controlled within our framework. \newline
In the proposition below, it is crucial to appropriately choose the initial time $-\varepsilon(T)$ in \eqref{philambdaEqt2D}. We provide here an intuitive explanation of its role, while a detailed analysis is presented in Section 4 of \cite{BDdPM}. \newline
\underline{The role of $\varepsilon(T)$:} As explained in Section 4 of \cite{BDdPM}, in order to obtain \eqref{zeromassErr1} and \eqref{smallSecMomErr1}, it is necessary to solve the following \emph{nonlocal} equation for $\lambda_{0}\dot{\lambda}_{0}(t)$ over the interval $t \in (0, T)$:
\begin{align*}
	-\int_{-\varepsilon(T)}^{t-\lambda_{0}^{2}}\frac{\lambda_{0}\dot{\lambda}_{0}(s)}{t-s}\,ds + (\gamma + 1 - \ln 4)\lambda_{0}\dot{\lambda}_{0}(t) + \int_{-\varepsilon(T)}^{T}\frac{\lambda_{0}\dot{\lambda}_{0}(s)}{T-s}\,ds \approx 0
\end{align*}
where $\gamma$ denotes the Euler–Mascheroni constant. To make sense of this equation, it is clear, by simply evaluating at $t=0$, that a proper understanding of equation \eqref{philambdaEqt2D} for negative times is required.

\begin{proposition}\label{Proplambda0alpha0}
	There exist functions $\lambda_{0}(t)$ and $\alpha_{0}(t)$ such that, for some $T \ll \varepsilon(T) \ll 1$ as defined in \eqref{philambdaEqt2D}, and for any $\rho > 0$, the following properties hold:
	\begin{align*}
		\int_{\mathbb{R}^{2}} S(u_{1}(\mathbf{p}_{0})) \, \tilde{\chi} \, dr\,dz = 0, \quad t \in (0, T),
	\end{align*}
	and
	\begin{align*}
		\int_{\mathbb{R}^{2}} S(u_{1}(\mathbf{p}_{0})) \, |(r, z) - (q_{1}(T), 0)|^{2} \, dr\,dz = O\left(e^{-(2 - \rho)\sqrt{2|\ln(T - t)|}}\right), \quad t \in (0, T).
	\end{align*}
	Moreover, as $t \to T$, the parameter $\lambda_{0}(t)$ admits the asymptotic behavior:
	\begin{align*}
		\lambda_{0}(t) &= 2 e^{-\frac{\gamma + 2}{2}} \sqrt{T - t} \, e^{-\sqrt{\frac{|\ln(T - t)|}{2}}} \left(1 + O\left(\frac{1}{|\ln(T - t)|^{1/8}}\right) \right), \\
		\dot{\lambda}_{0}(t) &= - e^{-\frac{\gamma + 2}{2}} \frac{1}{\sqrt{T - t}} \, e^{-\sqrt{\frac{|\ln(T - t)|}{2}}} \left(1 + O\left(\frac{1}{|\ln(T - t)|^{1/8}} \right)\right), \\
		|\ddot{\lambda}_{0}(t)| &\lesssim \frac{1}{(T - t)^{3/2}} \, e^{-\sqrt{\frac{|\ln(T - t)|}{2}}}.
	\end{align*}
	In addition, the function $\alpha_{0}(t)$ satisfies the decay estimate:
	\begin{align*}
		|\alpha_{0}(t) - 1| \lesssim e^{-(2 - \rho)\sqrt{2|\ln(T - t)|}}.
	\end{align*}
\end{proposition}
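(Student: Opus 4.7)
The plan is to determine $\alpha_0$ and $\lambda_0$ by imposing the mass cancellation \eqref{zeromassErr1} and the smallness of the second moment \eqref{smallSecMomErr1}; after exploiting the structure of $S(u_1(\mathbf p_0))$ collected through Sections \ref{ChemoExSec}--\ref{3dphilambdaSec}, these two conditions decouple into a pointwise algebraic relation which prescribes $\dot{\alpha}_0$ in terms of $\lambda_0\dot{\lambda}_0$, and a scalar nonlocal integro-differential equation for the single unknown $g(t):=\lambda_0(t)\dot{\lambda}_0(t)$. The strategy closely mirrors the two-dimensional argument of \cite{BDdPM} and \cite{DdPDMW}; the only genuinely new input needed here is to verify that the three-dimensional corrections, namely the term $\frac{1}{r}\partial_r u_0$ appearing in $S$ and the difference between $v_0$ and $\alpha\Gamma_0$ encoded in $\mathcal R$, contribute only subleading terms. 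This is precisely where Theorem \ref{Expansionv0}, Proposition \ref{3dGRADIENTExpansion} and Proposition \ref{3dGRADIENTExpansionpsilambda} come in.

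For the mass condition, I would integrate $S(u_1(\mathbf p_0))\tilde\chi$ term by term using the decomposition \eqref{S2dERR}, together with $\int_{\mathbb R^2}U=8\pi$, $\int_{\mathbb R^2}Z_0=0$, $\int_{\mathbb R^2}\nabla_y U=0$ and the vanishing of $\int\nabla\cdot(U\nabla\Gamma_0)$. The surviving leading order is $-(\dot{\alpha}_0/\lambda_0^2)\int U\,dy$ plus contributions from the cut-off region and from $\varphi_\lambda^{(6)}$ that are controlled via Lemma \ref{estimatePhilambda}; the three-dimensional correction $\frac{1}{r}\partial_r u_0$ contributes a term of size $O(\lambda_0)$, which by \eqref{EstiLambda} is strictly subleading. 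Solving for $\dot{\alpha}_0$ and integrating backward from $T$ yields the claimed decay $|\alpha_0(t)-1|\lesssim e^{-(2-\rho)\sqrt{2|\ln(T-t)|}}$ for any $\rho>0$, provided $g$ satisfies its a priori bound.

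The second-moment computation is what drives the whole construction. Multiplying $S(u_1(\mathbf p_0))$ by $|(r,z)-(q_1(T),0)|^2$ and integrating, the terms involving $\dot{\lambda}_0\lambda_0^{-3}Z_0(y)$ produce a local piece proportional to $\lambda_0\dot{\lambda}_0(t)$ via $\int|y|^2 Z_0\,dy = -16\pi$. The essential nonlocal contribution comes from substituting the Duhamel representation of $\varphi_\lambda^{(6)}$ associated with \eqref{philambdaEqt2D} and exploiting the explicit heat-type kernel of $\partial_t-\Delta_{(r,z)}-4(r,z)|(r,z)|^{-2}\cdot\nabla$; after the resulting convolution is expanded asymptotically as in \cite{DdPDMW,BDdPM}, the second-moment equation reduces to the nonlocal identity
\begin{equation*}
-\int_{-\varepsilon(T)}^{t-\lambda_0^2}\frac{g(s)}{t-s}\,ds\;+\;(\gamma+1-\ln 4)\,g(t)\;+\;\int_{-\varepsilon(T)}^{T}\frac{g(s)}{T-s}\,ds\;=\;R(t),
\end{equation*}
where $|R(t)|\lesssim e^{-(\frac32+\sigma)\sqrt{2|\ln(T-t)|}}$ for some $\sigma>0$; the remainder $R$ absorbs in particular the contributions coming from $\mathcal R$ and from the $1/r$ term, whose smallness is quantitatively ensured by Theorem \ref{Expansionv0} and Proposition \ref{3dGRADIENTExpansion}.

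The main obstacle, and the technical heart of the statement, is the analysis of this nonlocal equation. Following \cite{BDdPM}, I would pass to the variable $\tau=\sqrt{|\ln(T-t)|/2}$ and make the ansatz $g(t)=-\frac{1}{2}e^{-2\tau}(1+h(\tau))$; a careful asymptotic expansion of the two integrals then shows that the principal symbol of the linearized operator at $g$ acts as multiplication by $2\tau+(\gamma+1-\ln 4)$, which is exactly cancelled by the prefactor $2e^{-(\gamma+2)/2}$ in the expansion of $\lambda_0$ predicted by the statement. The remainder $h$ is then obtained through a contraction mapping in a weighted space, the logarithmic regularization coming from the cut-off at $s=t-\lambda_0^2$ providing a controlled loss of $|\ln(T-t)|^{-1/8}$; the artificial initial time $-\varepsilon(T)$ introduced in \eqref{philambdaEqt2D} is precisely what makes the tail integral $\int_{-\varepsilon(T)}^{T}g(s)/(T-s)\,ds$ well defined and keeps the functional setting coercive. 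Once $g$ is constructed, integrating $\lambda_0\dot\lambda_0=g$ yields the expansion of $\lambda_0$, differentiating gives $\dot\lambda_0$, and a second differentiation combined with direct estimation of the derivatives of the nonlocal equation produces the claimed bound on $\ddot\lambda_0$, completing the proof.
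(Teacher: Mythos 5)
Your proposal follows essentially the same route as the paper: the paper's proof consists precisely of the observation that, thanks to the cut-offs and the expansions of Section \ref{ChemoEx}, the quantity $S(u_{1}(\mathbf{p}_{0}))$ and the nonlocal terms reduce to their two-dimensional counterparts (with $\varphi_{\lambda}$ coinciding locally with the solution of \eqref{philambdaEqt2D}), after which the construction of $\lambda_{0},\alpha_{0}$ is delegated to Section 4 of \cite{BDdPM}; you reconstruct both that reduction and the content of the cited argument, arriving at the same nonlocal equation for $\lambda_{0}\dot{\lambda}_{0}$ displayed before the proposition. One slip worth correcting in your intermediate heuristic: $\int_{\mathbb{R}^{2}}|y|^{2}Z_{0}\,dy$ is not $-16\pi$ --- since $Z_{0}(y)=16(1-|y|^{2})(1+|y|^{2})^{-3}\sim -16|y|^{-4}$, the second moment diverges logarithmically, and it is exactly the resulting factor $\ln\bigl(\sqrt{\delta(T-t)}/\lambda_{0}\bigr)$, combined with the Duhamel convolution for $\varphi_{\lambda}^{(6)}$, that produces the large multiplier of $\lambda_{0}\dot{\lambda}_{0}(t)$ in the nonlocal equation; with a finite second moment the equation would not have the structure you (correctly) write down afterwards.
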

\begin{proof}
	The key observation is that, due to the presence of the cut-off function, we can approximate the term $S(u_{1}(\mathbf{p}_{0}))$ by $S^{(2d)}(\mathbf{p}_{0})$ (see the definition in \eqref{S2dERR}). A similar approximation holds for the three-dimensional inverse Laplacian operator, which can be effectively replaced by its two-dimensional counterpart in the region of interest. Furthermore, within this region, the function $\varphi_{\lambda}$ coincides with the solution to the equation \eqref{philambdaEqt2D}. \newline
	With these preliminary reductions in place, the detailed proof follows directly from the arguments presented in Section 4 of \cite{BDdPM}.
\end{proof}
\subsection{A further improvement of the approximation}\label{lastimprovsolSec} We now introduce the elliptic approximation by considering the solution to the equation
\begin{align}\label{EllipticEq1}
	L[\phi_{0}^{i}] = -\lambda_{0}^{4}S(u_{1}(\mathbf{p}_{0}))\tilde{\chi} + c_{0}(t)W_{2}(\bar{y}_{0}), \qquad \bar{y}_{0} = \frac{(r,z) - (q_{1}(T), 0)}{\lambda_{0}} \in \mathbb{R}^{2},
\end{align}
where \( L \) is the operator defined in \eqref{OperatorL}. In \eqref{EllipticEq1}, the time variable \( t \) is regarded as a parameter. The function \( W_{2}(\bar{y}_{0}) \) is a fixed, smooth, radial function with compact support, chosen to satisfy the normalization conditions:
\begin{align}\label{W2}
	\int_{\mathbb{R}^{2}} W_{2}(y) \, dy = 0, \qquad \int_{\mathbb{R}^{2}} W_{2}(y) |y|^{2} \, dy = 1.
\end{align}
From Proposition \ref{Proplambda0alpha0} we know that for any \( t \in (0, T) \) and for any \( \rho > 0 \), the correction coefficient satisfies
\begin{align}\label{c0estimate}
	|c_{0}(t)| \leq C e^{-(2 - \rho)\sqrt{2|\ln(T - t)|}}.
\end{align}
Moreover, we have the mass cancellation condition
\[
\int_{\mathbb{R}^{2}} S(u_{1}(\mathbf{p}_{0})) \tilde{\chi} \, dr \, dz = 0.
\]
Thanks to Lemma \ref{Lemma51} and Lemma \ref{EstimateErr1}, we can construct a solution \( \phi_{0}^{i} \) to \eqref{EllipticEq1} satisfying the decay and orthogonality estimates:
\begin{align}\label{estimateEllCorr}
	|\phi_{0}^{i}(\bar{y})| \leq C e^{-\sqrt{2|\ln(T-t)|}} \sqrt{|\ln(T-t)|} \frac{\ln(2 + |\bar{y}|)}{1 + |\bar{y}|^{4}}, \qquad \int_{\mathbb{R}^{2}} \phi_{0}^{i}(y) \, dy = 0.
\end{align}
\subsection{Reformulation of the system}\label{RefInnerOutGluingSystem}
We decompose the inner solution and the parameters as
\begin{align}\label{ImprovedSols}
	\begin{cases}
		\phi^{i} = \phi_{0}^{i} + \phi, \\
		\mathbf{p} = \mathbf{p}_{0} + \mathbf{p}_{1},
	\end{cases}
\end{align}
where \( \mathbf{p}_{0} = (\lambda_{0}(t), \alpha_{0}(t), (q_{1}(T), 0)) \) is defined in Section~\ref{Proplambda0alpha0}, and \( \mathbf{p}_{1} = (\lambda_{1}(t), \alpha_{1}(t), \xi_{1}(t)) \) denotes the correction. Recalling that \( \phi_{0}^{i} \) solves equation \eqref{EllipticEq1}, substituting the ansatz \eqref{ImprovedSols} into \eqref{innerEq1} and \eqref{outerEq0} leads to the following system:
\begin{align}\label{innerEq2}
	\lambda^{2} \partial_{t} \phi \, \chi = L[\phi] \, \chi + B[\phi^{i}] \, \chi + \hat{E}_{2} \, \tilde{\chi}_{2} + \hat{E}_{3}[\phi, \varphi^{o}, \mathbf{p}] \, \tilde{\chi},
\end{align}
\begin{align}\label{outerEq1}
	\partial_{t} \varphi^{o} = \Delta_{(r,z)} \varphi^{o} + \frac{1}{r} \partial_{r} \varphi^{o} - \nabla_{(r,z)} v_{0} \cdot \nabla_{(r,z)} \varphi^{o} + G_{1}(\phi, \varphi^{o}, \mathbf{p}_{1}).
\end{align}
In equation \eqref{innerEq2} \( \tilde{\chi} \) is the cut-off introduced in \eqref{cutoff1}. We now introduce an additional cut-off:
\begin{align}\label{cutoff2}
	\tilde{\chi}_{2}(r, z, t) = \chi_{0}\left( \frac{|(r, z) - \xi|}{\sqrt{T - t}} e^{\mu \sqrt{2|\ln(T - t)|}} \right),
\end{align}
where \( \chi_{0} \) is the function defined in \eqref{cutoff} and \( \mu > 0 \) is a small parameter to be fixed.
We define the error term
\begin{align}\label{E2hat}
	\hat{E}_{2} = E_{2} \chi = \left( -\lambda^{2} \partial_{t} \phi_{0}^{i} + B[\phi_{0}^{i}] + c_{0}(t) W_{2}(y) \right) \chi,
\end{align}
and the nonlinear remainder
\begin{align}\label{E(1)}
	\hat{E}^{(1)}(\phi, \varphi^{o}, \mathbf{p}_{1}) = \, & \left[ \hat{E}_{3}(\phi_{0}^{i} + \phi, \varphi^{o}, \mathbf{p}_{0} + \mathbf{p}_{1}) + \lambda^{4} S(u_{1}(\mathbf{p}_{0} + \mathbf{p}_{1})) \chi - \lambda_{0}^{4} S(u_{1}(\mathbf{p}_{0})) \tilde{\chi} \chi \right] \nonumber \\
	& + \left( \hat{E}_{2}(\bar{y}) - \hat{E}_{2}(y) \right) \tilde{\chi}_{2},
\end{align}
where the variable \( \bar{y} \) arises from the change of variables around \( \mathbf{p}_{0} \), as defined previously.
Finally, the outer right-hand side is given by
\begin{align}\label{outererrorFinal}
	g(\phi, \varphi^{o}, \mathbf{p}_{1}) = G_{1}(\phi_{0}^{i} + \phi, \varphi^{o}, \mathbf{p}_{0} + \mathbf{p}_{1}) + \lambda^{-4} E_{2} (1 - \tilde{\chi}_{2}) \chi,
\end{align}
where \( \hat{E}_{1} \) and \( G_{1} \) are the nonlinear terms appearing on the right-hand sides of equations \eqref{innerEq2} and \eqref{outerEq1},respectively. \newline
Assuming estimate \eqref{EstiPar}, we observe that for all \( |y| \le \frac{2\sqrt{\delta(T - t)}}{\lambda(t)} \), the following estimate holds:
\begin{align*}
	|E_{2}(y, t)| \lesssim e^{-2\sqrt{2|\ln(T - t)|}} \sqrt{|\ln(T - t)|} \frac{\ln(2 + |y|)}{1 + |y|^{4}} + e^{-(2 - \rho)\sqrt{2|\ln(T - t)|}} |W_{2}(y)|.
\end{align*}
We remark that, in order to control \( \partial_{t} \phi_{0}^{i} \), it is necessary to estimate the time derivative of the right-hand side of \eqref{ellipticEq}. This can be done directly thanks to the available estimates on the time derivatives of \( \lambda_{0} \) and \( \alpha_{0} \) (see Corollary 4.7 in \cite{BDdPM} for further details). The presence of the cut-off \( \tilde{\chi}_{2} \) allows us to obtain
\begin{align}\label{EstimateE2Err}
	|E_{2}(y, t)\tilde{\chi}_{2}| \lesssim \frac{e^{-\nu \sqrt{2|\ln(T - t)|}}}{(1 + |y|)^{6 + \sigma}}
\end{align}
for any \( \nu < 1 + 2\mu - \frac{\sigma}{2} + \sigma \mu \). We will choose \( \mu \) and \( \sigma \) to be small positive constants such that
\[
2\mu - \frac{\sigma}{2} > 0,
\]
so that
\[
1 < \nu < 1 + 2\mu - \frac{\sigma}{2}.
\]
At the same time, we observe that
\begin{align*}
	|\lambda^{-4} E_{2} (1 - \tilde{\chi}_{2}) \chi| \le \frac{e^{-2(1 - 2\mu)\sqrt{2|\ln(T - t)|}}}{(T - t)^{2}} |\ln(T - t)| \, \chi.
\end{align*}
We anticipate (as clarified in greater detail in \cite{BDdPM}) that, in order to solve the final inner-outer system it is necessary to fix \( \nu \) such that condition \eqref{EstimateE2Err} is satisfied ensuring also that
\[
2(1 - 2\mu) > \nu + \frac{1}{2},
\]
in order to control the error in the outer region. For this reason we will take \( \nu > 1 \) but close to \( 1 \), and choose \( \mu > 0 \) sufficiently small.

\subsection{The inner-outer gluing system}\label{finalinneroutSec}
In this Section we decompose \( \phi \) into a radial part \( [\phi]_{\text{rad}} \), defined by
\begin{align}\label{radialinn}
	[\phi]_{\text{rad}}(|y|, t) = \frac{1}{2\pi} \int_{0}^{2\pi} \phi(|y|e^{i\theta}, t)\, d\theta,
\end{align}
and a remainder term \( \phi_{3} \) with no radial mode:
\begin{align}\label{noradialinn}
	\phi_{3} = \phi - [\phi]_{\text{rad}}.
\end{align}
We further decompose the radial component as
\begin{align*}
	[\phi]_{\text{rad}} = \phi_{1} + \phi_{2}.
\end{align*}
Here, the function \( \phi_{1} \) will solve an equation involving the radial part of the right-hand side of \eqref{innerEq2}, projected so that it has zero mass and zero second moment. The function \( \phi_{2} \), on the other hand, will correct for the remaining second moment, which is not identically zero but is sufficiently small.
We now introduce another cut-off function:
\begin{align}\label{cutoff3}
	\hat{\chi}(r, z, t) = \chi_{0}\left(\frac{|(r, z) - \xi(t)|}{4\sqrt{\delta(T - t)}}\right),
\end{align}
which allows us to localize the operator \( B \) by observing that
\[
B[\phi]\chi = B[\phi \hat{\chi}]\chi.
\]
By performing this localization of \( B \), we introduce additional terms in the equations: specifically, the operator \( \mathcal{E}[\phi_{1}] \) appears in the equation for \( \phi_{1} \), and \( \mathcal{F}[\phi_{3}] \) arises in the equation for \( \phi_{3} \). For further discussion, see Section~\ref{reviewInnerTheories}.
These operators are the same as those studied in Sections 10 and 11 of \cite{DdPMW}.\newline
Recalling \eqref{E2hat} and \eqref{E(1)}, and setting \( E^{(2)} = E^{(1)} + \hat{E}_{2}\tilde{\chi}_{2} \), we define the notation
\begin{align*}
	\hat{E}_{4} = \hat{E}_{3}[\phi_{1} + \phi_{2} + \phi_{3}, \varphi^{o}, \mathbf{p}_{1}] - \mathcal{E}[\phi_{1}]\chi - \mathcal{F}[\phi_{3}]\chi.
\end{align*}
We can then split the equation \eqref{innerEq2} as follows:
\begin{align*}
	\lambda^{2} \partial_{t}(\phi_{1} + \phi_{2})\chi 
	= L[\phi_{1} + \phi_{2}]\chi 
	+ B[(\phi_{1} + \phi_{2})\hat{\chi}]\chi 
	+ \mathcal{E}[\phi_{1}]\chi 
	+ \left( 	\hat{E}_{4}[\phi_{1} + \phi_{2} + \phi_{3}, \varphi^{o}, \mathbf{p}_{1}]_{\text{rad}} \right) \tilde{\chi},
\end{align*}
\begin{align*}
	\lambda^{2} \partial_{t} \phi_{3}\chi 
	= L[\phi_{3}]\chi 
	+ B[\phi_{3} \hat{\chi}]\chi 
	+ \mathcal{F}[\phi_{3}]\chi 
	+ \left( 	\hat{E}_{4}[\phi_{1} + \phi_{2} + \phi_{3}, \varphi^{o}, \mathbf{p}_{1}] - 	\hat{E}_{4}[\cdot]_{\text{rad}} \right) \tilde{\chi},
\end{align*}
where \( 	\hat{E}_{4}[\cdot]_{\text{rad}} \) denotes the radial projection, as defined in \eqref{radialinn}. \newline
Finally, simplifying the cut-off \( \chi \) and defining \( F_{0} = \hat{E}_{4}/\chi \), we obtain:
\begin{align*}
	\lambda^{2} \partial_{t}(\phi_{1} + \phi_{2}) 
	= L[\phi_{1} + \phi_{2}] 
	+ B[(\phi_{1} + \phi_{2})\hat{\chi}] 
	+ \mathcal{E}[\phi_{1}] 
	+ \left[ F_{0}(\phi_{1} + \phi_{2} + \phi_{3}, \varphi^{o}, \mathbf{p}_{1}) \right]_{\text{rad}} \tilde{\chi},
\end{align*}
\begin{align*}
	\lambda^{2} \partial_{t} \phi_{3} 
	= L[\phi_{3}] 
	+ B[\phi_{3} \hat{\chi}] 
	+ \mathcal{F}[\phi_{3}] 
	+ \left( F_{0}(\phi_{1} + \phi_{2} + \phi_{3}, \varphi^{o}, \mathbf{p}_{1}) - \left[ F_{0}(\cdot) \right]_{\text{rad}} \right) \tilde{\chi}.
\end{align*}
As in \cite{BDdPM}, for any function \( h(y,t) \) with sufficient spatial decay, we define:
\begin{align}
	m_{0}[h](t) = \int_{\mathbb{R}^{2}} h(y,t)\, dy, \qquad 
	m_{2}[h](t) = \int_{\mathbb{R}^{2}} h(y,t) |y|^{2}\, dy,
\end{align}
\begin{align*}
	m_{1,j}[h](t) = \int_{\mathbb{R}^{2}} h(y,t) y_{j}\, dy, \quad j = 1, 2,
\end{align*}
which correspond to the mass, second moment, and center of mass of \( h \), respectively. Let \( W_{0} \in C^{\infty}(\mathbb{R}^{2}) \) be a radial function with compact support such that
\begin{align*}
	\int_{\mathbb{R}^{2}} W_{0}(y)\, dy = 1, \qquad 
	\int_{\mathbb{R}^{2}} W_{0}(y) |y|^{2}\, dy = 0.
\end{align*}
Let \( W_{1,j} \), for \( j = 1,2 \), be smooth compactly supported functions of the form \( W_{1,j}(y) = \tilde{W}(|y|) y_{j} \), satisfying
\begin{align*}
	\int_{\mathbb{R}^{2}} W_{1,j}(y) y_{j}\, dy = 1.
\end{align*}
We also recall the definition of \( W_{2} \) from \eqref{W2}. Then, we have the following properties:
\( h - m_{0}[h] W_{0} \) has zero mass,
\( h - m_{2}[h] W_{2} \) has zero second moment,
\( h - m_{1,1}[h] W_{1,1} - m_{1,2}[h] W_{1,2} \) has zero center of mass. \newline
We can now express the system as follows:
\begin{align}\label{innerEqFINmode0}
	\begin{cases}
		\lambda^{2} \partial_{t} \phi_{1} = L[\phi_{1}] + B[\phi_{1}] + \mathcal{E}[\phi_{1}] + f - m_{0}[f] W_{0} - m_{2}[f] W_{2}, & \text{in } \mathbb{R}^{2} \times (0,T), \\
		\phi_{1}(\cdot, 0) = \phi_{1,0}, & \text{in } \mathbb{R}^{2},
	\end{cases}
\end{align}
\begin{align}\label{innerEQfinalMode0NOORT}
	\begin{cases}
		\lambda^{2} \partial_{t} \phi_{2} = L[\phi_{2}] + B[\phi_{2} \hat{\chi}] + m_{2}[f] W_{2}, & \text{in } \mathbb{R}^{2} \times (0,T), \\
		\phi_{2}(\cdot, 0) = \phi_{2,0}, & \text{in } \mathbb{R}^{2},
	\end{cases}
\end{align}
\begin{align}\label{innerEqfinalMode1}
	\begin{cases}
		\lambda^{2} \partial_{t} \phi_{3} = L[\phi_{3}] + B[\phi_{3} \hat{\chi}] + \mathcal{F}[\phi_{3}] + f_{3} - m_{1,j}[f_{3}] W_{1,j}, & \text{in } \mathbb{R}^{2} \times (0,T), \\
		\phi_{3}(\cdot, 0) = \phi_{3,0}, & \text{in } \mathbb{R}^{2},
	\end{cases}
\end{align}
where the source terms \( f \) and \( f_{3} \) are defined by
\begin{align*}
	f[\phi_{1} + \phi_{2} + \phi_{3}, \varphi^{o}, \mathbf{p}_{1}] 
	&= \left[ F_{0}(\phi_{1} + \phi_{2} + \phi_{3}, \varphi^{o}, \mathbf{p}_{1}) \right]_{\text{rad}} \tilde{\chi}, \\
	f_{3}[\phi_{1} + \phi_{2} + \phi_{3}, \varphi^{o}, \mathbf{p}_{1}] 
	&= \left( F_{0}[\phi_{1} + \phi_{2} + \phi_{3}, \varphi^{o}, \mathbf{p}_{1}] - \left[ F_{0}(\cdot) \right]_{\text{rad}} \right) \tilde{\chi}.
\end{align*}
We aim to determine parameters \( \alpha_{1} \), \( \xi_{1}(t) \) (defined in \( (0,T) \)), and \( \lambda_{1}(t) \) (defined in \( (-\varepsilon(T), T) \)) such that the following orthogonality conditions are satisfied:
\begin{align*}
	\begin{cases}
		m_{0}[f[\phi_{1} + \phi_{2} + \phi_{3}, \varphi^{o}, \mathbf{p}_{1}]] = 0, \\
		m_{1,j}[f_{3}[\phi_{1} + \phi_{2} + \phi_{3}, \varphi^{o}, \mathbf{p}_{1}]] = 0, & \text{for } j = 1, 2, \quad t \in (0,T).
	\end{cases}
\end{align*}
Using the definition given in \eqref{outererrorFinal}, we observe that the inner system is coupled with the following outer equation:
\begin{align}\label{outerEqfinal}
	\begin{cases}
		\partial_{t} \varphi^{o} = \Delta_{(r,z)} \varphi^{o} + \frac{1}{r} \partial_{r} \varphi^{o} - \nabla v_{0} \cdot \nabla \varphi^{o} + g(\phi_{1} + \phi_{2} + \phi_{3}, \varphi^{o}, \mathbf{p}_{1}), \quad \text{in } \mathbb{R}^{+} \times \mathbb{R} \times (0, T), \\
		\varphi^{o}(\cdot, 0) = 0, \quad \text{in } \mathbb{R}^{+} \times \mathbb{R}.
	\end{cases}
\end{align}
\underline{The role of $\lambda_{1}(t)$:} As anticipated in previous sections, the function $\lambda_{1}$ is chosen to ensure that the second moment becomes sufficiently small. To achieve this, we select $\lambda_{1}$ so that the following approximate identity holds:
\begin{align}\label{ExLambda1}
  & 4 \int_{\mathbb{R}^{2}}(\varphi_{\lambda_{0}+\lambda_{1}} - \varphi_{\lambda_{0}})\,dx 
  - 4 \int_{\mathbb{R}^{2}}(\varphi_{\lambda_{0}+\lambda_{1}}(T) - \varphi_{\lambda_{0}}(T))\,dx \notag \\
  &\quad - 64\pi\beta \frac{(\lambda_{0}+\lambda_{1})^{2} - \lambda_{0}^{2}}{\delta(T - t)} 
  \approx \left(-4 + \frac{1}{\pi} \int_{\mathbb{R}^{2}}\varphi_{\lambda}(T)\,dx\right) 
  \int_{t}^{T} \int_{\mathbb{R}^{2}} U \varphi_{\lambda} \tilde{\chi} \,dy\,ds,
\end{align}
where 
\[
\beta = \int_{0}^{\infty} \frac{1 - \chi_{0}(s)}{s^{3}}\,ds.
\]
The task of finding an approximate solution to \eqref{ExLambda1}, with sufficient control of its derivative, has been addressed in Lemma 5.5 of \cite{BDdPM}. It is important to note that while $\lambda_{1}$ is defined on the interval $(-\varepsilon(T), T)$, the right-hand side of \eqref{ExLambda1} is only well-defined for $t \in (0, T)$. This technical mismatch can be resolved by trivially extending $\varphi^{o}$ to zero for negative times.

\subsection{Review of the inner theories in \cite{BDdPM}}\label{reviewInnerTheories}
The goal of this section is to provide, for completeness, a review of the forced inner theories developed in \cite{BDdPM}. These inner constructions were strongly inspired by the framework introduced in \cite{DdPDMW}. However, due to the slower temporal decay of the coefficient in the \( B \) operator, it became necessary to introduce an additional source term. This source term has already been discussed in the previous Section. In the reminding of this section, we will also compute the three dimensional Laplacian of the solution to the inner equations, providing estimates that will be essential for the gluing scheme.
\newline 
We begin by introducing the time rescaled variable $\tau$, defined by
\begin{align}\label{tauvar}
	\tau(t) = \tau_0 + \int_0^t \frac{ds}{\lambda^2(s)}, \quad t \in (0, T),
\end{align}
where $\tau_0$ is taken to be large. Observe that $\tau \to \infty$ as $t \to T$, so we are now reformulating the inner problem as an infinite-time equation. The corresponding inner equations will be analyzed in the $(y, \tau)$ variables.
Given positive constants $\nu$, $p$, $\epsilon$, and a real number $m \in \mathbb{R}$, we define the norm for a function $h = h(y, \tau)$ as
\begin{align}\label{RHSinnernorm}
	\|h\|_{0; \nu, m, p, \epsilon} = \inf \left\{ K > 0 \;\middle|\; |h(y, \tau)| \le \frac{K}{\tau^{\nu} (\ln \tau)^m} \cdot \frac{1}{(1+|y|)^p}
	\begin{cases}
		1 & \text{if } |y| \le \sqrt{\tau}, \\
		\frac{\tau^{\epsilon/2}}{|y|^{\epsilon}} & \text{if } |y| \ge \sqrt{\tau}
	\end{cases} \right\}.
\end{align}
Similarly, for solutions $\phi = \phi(y, t)$ we define the norm
\begin{align}\label{SolutionINNERNORM}
	\|\phi\|_{1; \nu, m, p, \epsilon} = \inf \left\{ K > 0 \;\middle|\; |\phi(y, t)| + (1+|y|) |\nabla_y \phi(y, t)| \le \frac{K}{\tau^{\nu} (\ln \tau)^m} \cdot \frac{1}{(1+|y|)^p}
	\begin{cases}
		1 & \text{if } |y| \le \sqrt{\tau}, \\
		\frac{\tau^{\epsilon/2}}{|y|^{\epsilon}} & \text{if } |y| \ge \sqrt{\tau}
	\end{cases} \right\}.
\end{align}
At this stage, we make the initial data in equations \eqref{innerEqFINmode0}, \eqref{innerEQfinalMode0NOORT} and \eqref{innerEqfinalMode1} explicit. For that purpose, we introduce the function $\tilde{Z}_0$ defined as
\begin{align}\label{Z0tilde}
	\tilde{Z}_0(|y|) = \big(Z_0(|y|) - m_{Z_0} U\big) \chi_0\left( \frac{|y|}{\sqrt{\tau_0}} \right),
\end{align}
where the constant $m_{Z_0}$ is chosen so that $\int_{\mathbb{R}^2} \tilde{Z}_0 = 0$ and 
\begin{align*}
	Z_{0}(y)=2U+y\cdot \nabla U.
\end{align*}
We also adopt the following notation for the cut-off function:
\begin{align}\label{MhatFunction}
	\chi(r, z, t) = \chi_0\left( \frac{|x - \xi|}{\sqrt{\delta(T - t)}} \right) = \chi_0\left( \frac{|y|\lambda}{M(\tau)} \right).
\end{align}
It is worth noting that in the original inner equations \eqref{innerEqFINmode0}, \eqref{innerEQfinalMode0NOORT}, and \eqref{innerEqfinalMode1}, the operator $B$ is cut off using $\hat{\chi}$, which acts over a slightly larger region. However, this distinction has no impact on the analysis, as is made clear in Sections 7–11 of \cite{BDdPM}. \newline
\underline{The role of $\delta$}: Although a detailed and rigorous explanation is provided in \cite{BDdPM}, we offer here a formal overview of the role played by the parameter $\delta$. The inner theories developed in this section are inspired by those introduced in \cite{DdPDMW}. When considering the infinite-time problem, the corresponding inner equation, after performing the time rescaling, is nearly identical to the finite-time case, with one key distinction involving the operator $B$. Due to the different asymptotic behaviors of the parameter $\lambda(t)$, we obtain the following:
\begin{align*}
	\lambda \dot{\lambda}(2\phi + y \cdot \nabla \phi) \approx 
	\begin{cases}
		-\dfrac{1}{2\tau \ln \tau}(2\phi + y \cdot \nabla \phi), & \text{in the infinite-time case},\\
		-\dfrac{\ln \tau}{\tau}(2\phi + y \cdot \nabla \phi), & \text{in the finite-time case}.
	\end{cases}
\end{align*}
As demonstrated in the proof of Lemma 9.9 of \cite{BDdPM}, in order to derive suitable a priori estimates for the solution $\phi$, it is necessary that
\[
|B[\phi]| \ll \frac{1}{|y|^{2}} \left(|\phi| + |y \cdot \nabla \phi|\right).
\]
This is precisely achieved by cutting off the operator 
$B$, which involves introducing a small parameter 
$\delta$ and, as a result, leads to the appearance of additional source terms.
\newline
In what follows, we will also assume the following estimate on $\lambda(t)$:
\begin{align}\label{AssLambda}
	|\lambda(t) - \lambda_0(t)| \lesssim e^{-\sqrt{2|\ln(T - t)|}}, \quad 
	|\lambda(t) \dot{\lambda}(t) - \lambda_0(t) \dot{\lambda}_0(t)| \lesssim e^{-\frac{3}{2} \sqrt{2|\ln(T - t)|}}.
\end{align}
This assumption will be justified in Section~\ref{proofThm1Sec}.
\begin{proposition}\label{PropMode0Mass0ANTICIP}
	Let $\lambda = \lambda_{0} + \lambda_{1}$, where $\lambda_{0}$ is given by Proposition~\ref{Proplambda0alpha0}, and suppose that assumption~\eqref{AssLambda} holds. Let $\sigma \in (0,1)$, $\epsilon > 0$ with $\sigma + \epsilon < 2$, and let $1 < \nu < \frac{7}{4}$, $m \in \mathbb{R}$, and $q \in (0,1)$. 
	Then, for $\tau_0$ sufficiently large and for every radially symmetric function $h = h(|y|,\tau)$ satisfying
	\[
	\|h\|_{0;\nu,m,6+\sigma,\epsilon} < \infty, \quad \text{and} \quad \int_{\mathbb{R}^{2}} h(y,\tau)\,dy = 0 \quad \text{for all } \tau \in (\tau_{0},\infty),
	\]
	there exists a constant $c_{1} \in \mathbb{R}$ and a solution $\phi = \phi(y,\tau) = \mathcal{T}_{p}^{i,2}[h]$ to the problem
	\[
	\begin{cases}
		\partial_{\tau} \phi = L[\phi] + B[\phi \hat{\chi}] + h, \\
		\phi(\cdot,\tau_0) = c_1 \tilde{Z}_0,
	\end{cases}
	\]
	which defines a linear operator in $h$ and satisfies the following estimates:
	\[
	\|\phi\|_{1;\nu-1,m+q+1,4,2+\sigma+\epsilon} \le \frac{C}{(\ln \tau_{0})^{1-q}} \|h\|_{0;\nu,m,6+\sigma,\epsilon},
	\]
	\[
	|c_{1}| \le C \frac{1}{\tau_{0}^{\nu-1} (\ln \tau_{0})^{m+2}} \|h\|_{0;\nu,m,6+\sigma,\epsilon}.
	\]
\end{proposition}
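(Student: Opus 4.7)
The strategy is to build $\phi$ in two stages: first solve the inner equation with $B$ switched off, and then incorporate $B[\phi\hat{\chi}]$ as a small perturbation via a fixed point. The two crucial structural ingredients are the zero mass condition on $h$, which removes the obstruction to solvability of $L$ in the radial class along the scaling direction, and the initial condition $c_1\tilde{Z}_0$: the coefficient $c_1$ is tuned so as to eliminate the secular growth along the $L$-kernel element $Z_0 = 2U + y\cdot\nabla_y U$.

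First I would establish (or adapt from the analogous linear theory in \cite{DdPDMW} and Section~9 of \cite{BDdPM}) a linear solution operator $\mathcal{S}$ for
\begin{align*}
  \partial_\tau \phi = L[\phi] + h, \qquad \phi(\cdot,\tau_0) = c_1(h)\,\tilde{Z}_0,
\end{align*}
acting on radial, zero-mass right-hand sides. Because $Z_0$ spans the radial kernel of $L$, the zero-mass hypothesis allows one to represent the solution by a Duhamel formula with quantitative decay in the weighted norms \eqref{RHSinnernorm}--\eqref{SolutionINNERNORM}. The gain $6+\sigma \mapsto 4$ in spatial decay comes from the elliptic regularization of $L$, the loss $\nu\mapsto \nu-1$ from time integration, and the excess logarithmic factor $(\ln\tau)^{q+1}$ from the fact that decay along the $Z_0$ mode is only logarithmically improved. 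The coefficient $c_1(h)$ is then fixed by demanding that the component of the Duhamel integral along $Z_0$ vanish at $\tau=\tau_0$; projecting against $\tilde{Z}_0$ (which has zero mass by construction) and estimating gives the bound $|c_1|\lesssim \tau_0^{1-\nu}(\ln\tau_0)^{-(m+2)}\|h\|_{0;\nu,m,6+\sigma,\epsilon}$.

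Next I would define $\mathcal{T}_p^{i,2}[h]$ as the fixed point of
\begin{align*}
  \Psi \;\longmapsto\; \mathcal{S}\!\left[\,h + B[\Psi\hat{\chi}]\,\right],
\end{align*}
on a ball in the norm $\|\cdot\|_{1;\nu-1,m+q+1,4,2+\sigma+\epsilon}$. The essential estimate is that, on the support of $\hat{\chi}$, the variable $y$ is controlled by $\sqrt{\delta(T-t)}/\lambda$ and, by Proposition~\ref{Proplambda0alpha0} together with \eqref{AssLambda}, the coefficient $\lambda\dot\lambda$ is small in the sense captured by the weights in \eqref{RHSinnernorm}. This yields a bound for $B[\Psi\hat{\chi}]$ in $\|\cdot\|_{0;\nu,m,6+\sigma,\epsilon}$ by a small multiple of $\|\Psi\|_{1;\nu-1,m+q+1,4,2+\sigma+\epsilon}$, where the smallness is precisely the $(\ln\tau_0)^{-(1-q)}$ prefactor that appears in the conclusion. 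This smallness drives both the self-map property and the contraction for $\tau_0$ sufficiently large. A technical but standard point is that $B[\Psi\hat{\chi}]$ need not be mass-free: its mass is an explicit linear functional of $\Psi$, and it must be absorbed by simultaneously updating $c_1$ at each iteration, as done in the parallel construction of \cite{BDdPM}.

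The main obstacle is the sharpness of the estimate in both time and spatial decay: with source weight $(1+|y|)^{-(6+\sigma)}$ and output weight $(1+|y|)^{-4}$ carrying also the gradient, there is essentially no slack, so the Duhamel bounds must be verified with care. The outer profile $\tau^{\epsilon/2}|y|^{-\epsilon}$ appearing in \eqref{RHSinnernorm}--\eqref{SolutionINNERNORM} is precisely what allows one to bridge the self-similar crossover region $|y|\sim\sqrt{\tau}$, where the rescaled variable interacts with the cut-off $\hat{\chi}$. Tracking the sharp logarithmic exponent $m+q+1$, and verifying that the $B$-perturbation respects the outer weight without degrading the decay rate, is where the hardest bookkeeping lies; however, beyond careful parabolic analysis of the type performed in Sections~9--11 of \cite{BDdPM}, no new idea is required.
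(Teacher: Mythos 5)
The paper gives no argument for this proposition beyond a citation to Proposition 5.1 of \cite{BDdPM}, so there is nothing to compare line by line; but your scheme as written has two genuine problems. First, you attribute the gain $(\ln\tau_{0})^{-(1-q)}$ in the conclusion to the smallness of the $B$-perturbation. That cannot be right: in a fixed point $\phi=\mathcal{S}[h+B[\phi\hat\chi]]$ with $\|\mathcal{S}[g]\|_{1}\le A\|g\|_{0}$ and $\|B[\Psi\hat\chi]\|_{0}\le\epsilon_{0}\|\Psi\|_{1}$, one gets $\|\phi\|_{1}\le A(1-A\epsilon_{0})^{-1}\|h\|_{0}$, so the prefactor in front of $\|h\|$ is $A$, the constant of the \emph{linear} theory for $L$ alone. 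The logarithmic gain must therefore already be present in the estimate for $\mathcal{S}[h]$ (it comes from the interplay between the zero-mass structure, the slow decay along the $Z_{0}$ mode, and the split of the logarithmic weight between $\tau_{0}$ and $\tau$, since $(\ln\tau_{0})^{1-q}(\ln\tau)^{q+1}\le(\ln\tau)^{2}$); it is not produced by the perturbation step.

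Second, and more seriously, the contraction $\Psi\mapsto\mathcal{S}[h+B[\Psi\hat\chi]]$ does not close in the stated norms. The source norm $\|\cdot\|_{0;\nu,m,6+\sigma,\epsilon}$ requires spatial decay $(1+|y|)^{-(6+\sigma)}$ (and $|y|^{-(6+\sigma+\epsilon)}$ beyond $|y|\sim\sqrt{\tau}$), whereas $B[\Psi\hat\chi]=\lambda\dot\lambda\,\mathrm{div}_{y}(y\,\Psi\hat\chi)$ only inherits the $(1+|y|)^{-4}$ decay of $\Psi$ on the support of $\hat\chi$, which extends out to $|y|\sim\sqrt{\delta(T-t)}/\lambda\gg\sqrt{\tau}$. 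A direct comparison of weights in the region $\sqrt{\tau}\le|y|\le\sqrt{\delta(T-t)}/\lambda$ shows the ratio of $|B[\Psi\hat\chi]|$ to the admissible source weight grows like a positive power of $\tau$, so $B[\Psi\hat\chi]$ is not even bounded, let alone small, as a map back into the source space. This is precisely why the construction in \cite{BDdPM,DdPDMW} treats the full operator $L+B$ at once, deriving a priori estimates (via the partial-mass reduction of the radial problem, barriers for large $|y|$, and a contradiction/compactness argument) in which the cut-off guarantees $|B[\phi\hat\chi]|\ll|y|^{-2}(|\phi|+|y||\nabla_{y}\phi|)$ and $B$ is absorbed as a lower-order term relative to the structure of $L$, rather than as a bounded perturbation in the weighted norms. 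A minor additional remark: the mass of $B[\Psi\hat\chi]$, which you propose to correct by updating $c_{1}$, is automatically zero since $B[\Psi\hat\chi]$ is a divergence; the constant $c_{1}$ is instead dictated by cancelling the non-decaying part of the projection onto the slowly decaying mode.
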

\begin{proof}
	See the proof of Proposition 5.1 in \cite{BDdPM}
\end{proof}
\begin{proposition}\label{PropMode0Mass0}
	Let $\lambda = \lambda_0 + \lambda_1$, where $\lambda_0$ is defined in Proposition~\ref{Proplambda0alpha0}, and assume that condition \eqref{AssLambda} holds. Let $\sigma \in (0,1)$ and $\epsilon > 0$ be such that $\sigma + \epsilon < 2$, and choose $\nu \in (1, \min(1 + \frac{\varepsilon}{2}, 3 - \frac{\sigma}{2}, \frac{3}{2}))$, $m \in \mathbb{R}$, and $q \in (0,1)$. 
	There exists a number $C>0$ such that for any sufficiently large $\tau_0$ and any radially symmetric function $h = h(|y|, \tau)$ satisfying
	\[
	\|h\|_{0;\nu, m, 6+\sigma, \epsilon} < \infty \quad \text{and} \quad \int_{\mathbb{R}^2} h(y, \tau)\, dy = \int_{\mathbb{R}^{2}} h(y, \tau)\, |y|^{2} dy = 0 \quad \text{for all } \tau > \tau_0,
	\]
	there exist a constant $c_1 \in \mathbb{R}$, a linear operator $\mathcal{E}$, and a function $\phi = \phi(y, \tau) = \mathcal{T}_p^{i,2}[h]$ solving the problem
	\[
	\begin{cases}
		\partial_{\tau} \phi = L[\phi] + B[\phi] + \mathcal{E}[\phi] + h, \\
		\phi(\cdot, \tau_0) = c_1 L[\tilde{Z}_0],
	\end{cases}
	\]
	which defines a linear operator. Moreover, the following estimates hold:
	\[
	\|\phi\|_{1; \nu - \frac{1}{2}, m + \frac{q+1}{2}, 4, 2} \le C \|h\|_{0; \nu, m, 6 + \sigma, \epsilon},
	\]
	\[
	|c_1| \le C \frac{1}{\tau_0^{\nu - 1} (\ln \tau_0)^{m + 2}} \|h\|_{0;\nu, m, 6 + \sigma, \epsilon}.
	\]
	In addition, the operator $\mathcal{E}[\phi]$ is radial, satisfies the orthogonality conditions
	\[
	\int_{\mathbb{R}^{2}} \mathcal{E}[\phi](y)\, dy = \int_{\mathbb{R}^{2}} \mathcal{E}[\phi](y)\, |y|^{2} dy = 0,
	\]
	and obeys the pointwise bound
	\begin{align*}
		|\mathcal{E}[\phi](y)| \le C \|h\|_{0; \nu, m, 6 + \sigma, \epsilon} \frac{1}{\tau^{\nu} \ln^{m+q} \tau}
		\begin{cases}
			\frac{1}{M^{2}(\tau)} \frac{1}{(1 + \rho)^{6}}, & |y| \le M(\tau), \\
			\frac{1}{1 + |y|^{6}}, & |y| \ge M(\tau).
		\end{cases}
	\end{align*}
\end{proposition}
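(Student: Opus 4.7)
The strategy is to exploit the extra orthogonality $\int h\,|y|^{2}\,dy = 0$ to gain half a power of $\tau$ and half a logarithm over the decay provided by Proposition~\ref{PropMode0Mass0ANTICIP}. The plan is to reduce the problem, via the ansatz $\phi = L[\tilde\phi]$, to the weaker inversion already established in Proposition~\ref{PropMode0Mass0ANTICIP}, so that the gain comes from the smoothing properties of $L$ when the source carries double cancellation. Because the radial realisation of $L$ has one-dimensional kernel spanned by $Z_0 = 2U + y\cdot\nabla U$, variation of parameters yields an explicit right-inverse $\Psi$ (normalised by $\int\Psi[g]\,dy = 0$) for radial $g$ with $\int g\,dy = 0$; a direct ODE analysis shows that the additional cancellation $\int g\,|y|^{2}\,dy = 0$ buys two extra powers of spatial decay, giving
\begin{equation*}
\|\Psi[h]\|_{0;\nu,m,4+\sigma,\epsilon} \;\lesssim\; \|h\|_{0;\nu,m,6+\sigma,\epsilon}.
\end{equation*}

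With $\Psi$ in hand, set $\phi = L[\tilde\phi]$ and apply $\Psi$ to the equation. Since $L$ is time-independent and $L[\Psi[h]] = h$, the problem reduces to
\begin{equation*}
\partial_{\tau}\tilde\phi \;=\; L[\tilde\phi] + B[\tilde\phi\,\hat\chi] + \Psi[h] + \mathcal{R}[\tilde\phi],
\end{equation*}
where $\mathcal{R}[\tilde\phi] := \Psi\bigl(B[L\tilde\phi] - L\,B[\tilde\phi\,\hat\chi]\bigr) + \Psi\bigl(\mathcal{E}[L\tilde\phi]\bigr)$ collects a commutator term together with a term involving the correction $\mathcal{E}$ we are free to design. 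The commutator vanishes on $\{|y|\le M(\tau)\}$ because $\hat\chi\equiv 1$ there, and combined with $|\lambda\dot\lambda|\lesssim (\ln\tau)/\tau$ and the expected decay of $\tilde\phi$ it satisfies
\begin{equation*}
\|\mathcal{R}[\tilde\phi]\|_{0;\nu,m,6+\sigma,\epsilon} \;\lesssim\; \frac{1}{\ln\tau_0}\,\|\tilde\phi\|_{1;\nu-1,m+q+1,4,2+\sigma+\epsilon}.
\end{equation*}

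The reduced problem is now in the range of Proposition~\ref{PropMode0Mass0ANTICIP}, and a contraction mapping argument in $\|\cdot\|_{1;\nu-1,m+q+1,4,2+\sigma+\epsilon}$, closed thanks to the smallness in $(\ln\tau_0)^{-1}$, yields $\tilde\phi$ and a constant $\tilde c_1$ satisfying the conclusions of Proposition~\ref{PropMode0Mass0ANTICIP} with source $\Psi[h] + \mathcal{R}[\tilde\phi]$. Setting $\phi:=L[\tilde\phi]$ then produces a solution to the original equation with initial datum $L[\tilde c_1\,\tilde Z_0] = \tilde c_1\,L[\tilde Z_0]$, hence $c_1 = \tilde c_1$, and the bound on $|c_1|$ is inherited verbatim. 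The improved norm $\|\phi\|_{1;\nu-1/2,m+(q+1)/2,4,2}$ is obtained by converting the bound on $\tilde\phi$ through $L$: the application of $L$ trades the two spatial powers $\Psi$ had gained for an exchange between spatial decay and temporal decay, yielding exactly the claimed $\tau^{1/2}(\ln\tau)^{(q+1)/2}$ enhancement when matched against the parabolic scaling that underlies Proposition~\ref{PropMode0Mass0ANTICIP}.

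Finally, define
\begin{equation*}
\mathcal{E}[\phi] \;:=\; -\bigl(B[\phi] - B[\phi\,\hat\chi]\bigr) - m_0\bigl[B[\phi](1-\hat\chi)\bigr]W_0 - m_2\bigl[B[\phi](1-\hat\chi)\bigr]W_2;
\end{equation*}
the first summand converts the localised $B[\phi\,\hat\chi]$ produced by the reduction into the unlocalised $B[\phi]$ demanded by the statement, while the $W_0$ and $W_2$ subtractions enforce the orthogonalities $\int\mathcal{E}[\phi]\,dy = \int\mathcal{E}[\phi]\,|y|^{2}\,dy = 0$. The pointwise bound follows from $\operatorname{supp}(1-\hat\chi)\subset\{|y|\ge M(\tau)\}$, the explicit form $B[\phi] = \lambda\dot\lambda\,(2\phi + y\cdot\nabla\phi)$, the decay of $\phi$, and the bound on $|\lambda\dot\lambda|$. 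The main obstacle is the tight bookkeeping of the commutator estimate for $\mathcal{R}[\tilde\phi]$: one must carefully track how $\Psi$ interacts with the non-local piece $\nabla\cdot(U\nabla\psi)$ of $L$ under the cut-off $\hat\chi$ and check that the resulting outer-region errors lie in $\|\cdot\|_{0;\nu,m,6+\sigma,\epsilon}$ with the required $(\ln\tau_0)^{-1}$ smallness, so that the contraction closes and the final translation to $\phi = L[\tilde\phi]$ lands in the target norm.
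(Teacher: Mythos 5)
The paper itself gives no argument here beyond citing Proposition 5.2 of \cite{BDdPM}; the proof there (as in \cite{DdPDMW}) passes to the radial partial-mass variable $\hat\phi(r,\tau)=\int_0^r\phi\,s\,ds$, for which $L$ collapses to a one-dimensional parabolic operator, and extracts the extra half power of $\tau$ from refined barriers that exploit the vanishing second moment. Your conjugation $\phi=L[\tilde\phi]$ is a genuinely different route, but as written it has gaps I do not see how to close. The first is that the reduced source does not lie in the space needed to invoke Proposition~\ref{PropMode0Mass0ANTICIP}: your own estimate gives $\|\Psi[h]\|_{0;\nu,m,4+\sigma,\epsilon}\lesssim\|h\|_{0;\nu,m,6+\sigma,\epsilon}$, i.e.\ $\Psi[h]$ decays only like $(1+|y|)^{-(4+\sigma)}$, whereas that proposition requires a right-hand side with finite $\|\cdot\|_{0;\nu,m,6+\sigma,\epsilon}$ norm, i.e.\ decay $(1+|y|)^{-(6+\sigma)}$; so the ``reduced problem'' is not in its range and the contraction cannot be set up as stated.

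The second, more fundamental, gap is the passage from the bound on $\tilde\phi$ to the claimed bound on $\phi=L[\tilde\phi]$. Granting $\|\tilde\phi\|_{1;\nu-1,m+q+1,4,2+\sigma+\epsilon}<\infty$, the best pointwise information on $L[\tilde\phi]$ is of order $\tau^{-(\nu-1)}(\ln\tau)^{-(m+q+1)}(1+|y|)^{-6}$ (and even this requires two extra derivatives of $\tilde\phi$, which the $\|\cdot\|_{1}$ norm does not control; $\nabla\phi$ needs three). The target norm demands $|\phi(0,\tau)|\lesssim\tau^{-(\nu-\frac12)}(\ln\tau)^{-(m+\frac{q+1}{2})}$, and the trade of spatial for temporal decay you invoke, $(1+|y|)^{-2}\le C\tau^{-1}$, is only valid for $|y|\gtrsim\sqrt{\tau}$; for $|y|=O(1)$ it goes the wrong way, so no pointwise conversion can upgrade $\tau^{-(\nu-1)}$ to $\tau^{-(\nu-1/2)}$ there. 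The half-power gain must come from the dynamics (propagation in time of the second-moment cancellation), not from the elliptic regularity of $L$. Finally, your definition of $\mathcal{E}[\phi]$ subtracts $m_0[\cdot]W_0+m_2[\cdot]W_2$ without reinserting these terms elsewhere in the equation, so with that choice $\phi$ no longer solves the stated problem; in the cited construction those projections are removed by solving auxiliary problems rather than by discarding them.
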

\begin{proof}
	See the proof of Proposition 5.2. in \cite{BDdPM}.
\end{proof}

\begin{proposition}\label{PropMode1ANTICIP}
	Let $\lambda = \lambda_0 + \lambda_1$, where $\lambda_0$ is as defined in Proposition~\ref{Proplambda0alpha0}, and assume that condition \eqref{AssLambda} is satisfied. Let $0 < \varsigma < 1$, $0 < \epsilon < 2$, and choose $\nu \in \left(1, \min\left(1 + \frac{\varepsilon}{2}, \frac{3}{2} - \frac{\varsigma}{2} \right)\right)$ with $m \in \mathbb{R}$. Then, there exists a constant $C > 0$ such that, for sufficiently large $\tau_0$ and any function $h = h(y, \tau)$ satisfying
	\[
	[h]_{\mathrm{rad}} = 0, \quad \|h\|_{0;\nu, m, 5+\varsigma, \epsilon} < \infty, \quad \text{and} \quad \int_{\mathbb{R}^2} h(y, \tau) y_j \, dy = 0 \quad \text{for } j = 1, 2 \text{ and all } \tau \in (\tau_0, \infty),
	\]
	there exists an operator $\mathcal{F}[\phi]$ and a solution $\phi(y, \tau) = \mathcal{T}_{p}^{i,3}[h]$ to the problem
	\[
	\begin{cases}
		\partial_{\tau} \phi = L[\phi] + B[\phi \chi] + \mathcal{F}[\phi] + h, \\
		\phi(\cdot, \tau_0) = 0,
	\end{cases}
	\]
	which defines a linear operator. Furthermore, the following estimate holds:
	\[
	\|\phi\|_{1; \nu, m, 3+\varsigma, 2+\epsilon} \leq C \|h\|_{ 0;\nu, m, 5+\varsigma, \epsilon}.
	\]
	In addition, the operator $\mathcal{F}[\phi]$ has no radial part and satisfies the bound
	\begin{equation}\label{EstimateFFORGLUING}
		|\mathcal{F}[\phi]| \leq \frac{1}{\tau^{\nu+1} (\ln \tau)^{m-1}} \cdot \frac{1}{M^{\varsigma}(\tau)} \left( |W_1(y)| + |W_2(y)| \right).
	\end{equation}
\end{proposition}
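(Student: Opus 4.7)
The strategy mirrors that of Propositions \ref{PropMode0Mass0ANTICIP} and \ref{PropMode0Mass0}, but now on the non-radial sector. I would first invoke the inner linear theory for the unperturbed equation $\partial_\tau \phi_0 = L[\phi_0] + \tilde h$, $\phi_0(\cdot,\tau_0)=0$, where $\tilde h$ is taken with no radial part and zero first moments. The construction of such a solution operator $\mathcal{T}_0$ is essentially the one developed in \cite{DdPDMW} for the two dimensional Keller--Segel linearization: the two orthogonality conditions $\int \tilde h\, y_j\,dy = 0$ for $j=1,2$ are precisely what is needed to invert $L$ on modes orthogonal to the translation kernels $\partial_{y_1}U,\partial_{y_2}U$, and the angular decomposition ensures that $\mathcal{T}_0[\tilde h]$ inherits the vanishing of the radial part and the pointwise bound encoded in the norm $\|\cdot\|_{1;\nu,m,3+\varsigma,2+\epsilon}$. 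The parameter range $\nu<\frac32-\frac{\varsigma}{2}$ is exactly the one that guarantees time integrability of $\tilde h$ against the semigroup decay of $L$ on these modes.

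To treat the full problem I would write it as a fixed point equation
\begin{equation*}
\phi \;=\; \mathcal{T}_0\!\left[h + B[\phi\chi] + \mathcal{F}[\phi]\right],
\end{equation*}
and define $\mathcal{F}[\phi]$ so as to restore the admissibility of the source for $\mathcal{T}_0$. Concretely, set
\begin{equation*}
\mathcal{F}[\phi] \;:=\; -\,[B[\phi\chi]]_{\mathrm{rad}} \;-\; \sum_{j=1,2} m_{1,j}\!\big[B[\phi\chi] - [B[\phi\chi]]_{\mathrm{rad}}\big]\, W_{1,j},
\end{equation*}
where the radial projection $[\cdot]_{\mathrm{rad}}$ is defined as in \eqref{radialinn}; then $B[\phi\chi]+\mathcal{F}[\phi]$ has no radial part and zero first moments, as required. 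The pointwise bound \eqref{EstimateFFORGLUING} then follows by estimating the integrals in the definition of $\mathcal{F}[\phi]$ using the smallness $|\lambda\dot\lambda|\lesssim(\tau\ln\tau)^{-1}$ from Proposition \ref{Proplambda0alpha0} together with assumption \eqref{AssLambda}, and the localization provided by $\chi$ to the region $|y|\lesssim M(\tau)/\lambda$, where the weighted norm $\|\phi\|_{1;\nu,m,3+\varsigma,2+\epsilon}$ directly controls the relevant moments.

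The contraction is then closed in $\|\cdot\|_{1;\nu,m,3+\varsigma,2+\epsilon}$ via the key estimate
\begin{equation*}
\|B[\phi\chi] + \mathcal{F}[\phi]\|_{0;\nu,m,5+\varsigma,\epsilon} \;\lesssim\; \frac{1}{\ln\tau_0}\,\|\phi\|_{1;\nu,m,3+\varsigma,2+\epsilon},
\end{equation*}
where the prefactor comes from $\lambda\dot\lambda$ and the loss of exactly two powers of $|y|$ (from $3+\varsigma$ to $5+\varsigma$) reflects the differentiation in $B[\phi] = \lambda\dot\lambda(2\phi + y\cdot\nabla\phi)$. Combined with the linear estimate of $\mathcal{T}_0$, this yields a contraction once $\tau_0$ is taken large. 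The main obstacle, in my view, lies in simultaneously meeting two competing demands on $\mathcal{F}[\phi]$: it must be small enough pointwise to satisfy \eqref{EstimateFFORGLUING} on the appropriate scale $M(\tau)^{-\varsigma}$, yet $B[\phi\chi]+\mathcal{F}[\phi]$ must retain the full weighted decay $(1+|y|)^{-(5+\varsigma)}$ required by the source norm. This forces one to track carefully the commutator of $B$ with $\chi$ and to verify that the first moments of $B[\phi\chi]$, which are only borderline convergent for $\nu$ in the admissible range, are controlled uniformly. Once this bookkeeping is settled, the proposition follows.
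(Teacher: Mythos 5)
The paper does not actually prove this proposition; it defers entirely to Proposition 5.3 of \cite{BDdPM}. Your reconstruction of the architecture is the right one and matches what the surrounding text of Section \ref{reviewInnerTheories} describes: solve the mode-one linear problem for $L$ using the two orthogonality conditions against $y_1,y_2$, treat $B[\phi\chi]$ as a perturbation, and let $\mathcal{F}[\phi]$ be exactly the correction that restores the admissibility (vanishing first moments) of the source. Two remarks on your definition of $\mathcal{F}$: the term $-[B[\phi\chi]]_{\mathrm{rad}}$ is vacuous, since $\phi$ lives in the mode-$\ge 1$ sector, $\chi$ is radial, and both multiplication by $\chi$ and the dilation operator $2\phi+y\cdot\nabla\phi=\operatorname{div}(y\phi)$ preserve the angular decomposition; this is also why the statement can assert that $\mathcal{F}[\phi]$ has no radial part. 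More importantly, the bound \eqref{EstimateFFORGLUING} with the gain $M^{-\varsigma}(\tau)$ is not automatic from "the weighted norm controls the relevant moments": one must use $m_{1,j}[B[\phi\chi]]=-\lambda\dot\lambda\int\phi\chi\,y_j\,dy=\lambda\dot\lambda\int\phi(1-\chi)y_j\,dy$, which requires the exact vanishing $\int\phi\,y_j\,dy=0$ to be part of the fixed-point space (as it is in $X_{i,3}$) and then exploits the tail $\int_{|y|\gtrsim M/\lambda}(1+|y|)^{-(3+\varsigma)}|y|\,dy\lesssim (M/\lambda)^{-\varsigma}$. Without that cancellation the estimate loses the factor $M^{-\varsigma}$.

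The one point where your argument as written would not close is the claimed key estimate
\[
\|B[\phi\chi]+\mathcal{F}[\phi]\|_{0;\nu,m,5+\varsigma,\epsilon}\lesssim\frac{1}{\ln\tau_0}\|\phi\|_{1;\nu,m,3+\varsigma,2+\epsilon}.
\]
The operator $B$ does not lose two powers of $|y|$ by differentiation ($2\phi+y\cdot\nabla\phi$ has the same decay as $\phi$ in the $\|\cdot\|_{1}$ norm); the passage from weight $3+\varsigma$ to $5+\varsigma$ must be paid for by the size of $\lambda\dot\lambda$ on the support of $\chi$. Since $|\lambda\dot\lambda|\sim\ln\tau/\tau$ in the finite-time regime and the cutoff localizes to $|y|\lesssim M(\tau)/\lambda\sim\sqrt{\delta\tau/\ln\tau}$, one gets $|\lambda\dot\lambda|\,(1+|y|)^{2}\lesssim\delta$ there, so the smallness of the perturbation is $O(\delta)$, not $O(1/\ln\tau_0)$. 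This is exactly the mechanism the paper spells out in the paragraph "The role of $\delta$": the cutoff is introduced precisely so that $|B[\phi\chi]|\ll|y|^{-2}\left(|\phi|+|y\cdot\nabla\phi|\right)$, and the contraction for the mode-one problem closes by taking $\delta$ small (in contrast with the radial zero-mass problem of Proposition \ref{PropMode0Mass0ANTICIP}, where the gain $(\ln\tau_0)^{-(1-q)}$ is genuine). With this correction of the smallness mechanism, and the explicit use of $\int\phi\,y_j\,dy=0$ in deriving \eqref{EstimateFFORGLUING}, your outline coincides with the intended proof.
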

\begin{proof}
	See the proof of Proposition 5.3 in \cite{BDdPM}.
\end{proof}
In the next result, we quantify the error incurred by using the two dimensional inverse Laplacian of the inner solution in place of the full three dimensional inverse Laplacian. This estimate will be essential for implementing the fixed point argument in Section~\ref{proofThm1Sec}.
\begin{proposition}\label{ExpansionGradInner}
	Let $\psi$ denote the Newtonian potential solving the equation
	\begin{align*}
		-\Delta_{x} \psi = \frac{1}{\lambda^{2}} \, \phi\left( \frac{|(r,z)-\xi|}{\lambda} \right) \chi_{0}\left( \frac{|(r,z)-\xi|}{\sqrt{\delta(T-t)}} \right), \quad x = (r e^{i\theta}, z),
	\end{align*}
	where the function $\phi$ satisfies the decay condition
	\[
	|\phi(y)| \leq \frac{1}{1 + |y|^{k}}, \quad \text{for some } 3 < k \leq 4.
	\]
	Let $\hat{\psi}$ be the two dimensional inverse Laplacian defined in~\eqref{2dInvLapInn}, and let $\rho = |y|$. Then, for the same function $\omega_{1}(r,z)$ appearing in Theorem~\ref{Expansionv0}, the gradient of $\psi$ obeys the following pointwise bounds:
	\begin{align*}
		\nabla_{(r,z)}\psi =
		\begin{cases}
			\displaystyle \nabla_{(r,z)}\hat{\psi} + O(1), & \text{if } |(r,z) - \xi| \leq \delta \sqrt{T - t}, \\[1ex]
			\displaystyle \omega_{1}(r,z)\nabla_{(r,z)}\hat{\psi} + O(1), & \text{if } \delta \sqrt{T - t} \leq |(r,z) - \xi| \leq \varepsilon, \\[1ex]
			\displaystyle O\left( \frac{1}{|(r,z) - \xi|} \right), & \text{if } |(r,z) - \xi| \geq \varepsilon.
		\end{cases}
	\end{align*}
\end{proposition}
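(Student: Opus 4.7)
The proof will closely parallel that of Theorem \ref{Expansionv0} and Proposition \ref{3dGRADIENTExpansion}, with the function $\phi$ here playing the role that $U$ played there and with the integrability gained from the decay hypothesis $3<k\le 4$. The starting point is Lemma \ref{ReprLemmaAS}, which represents $\psi$ as
\[
\psi(r,z) = \frac{1}{\pi}\int_{\mathbb{R}}\int_{0}^{\infty} \frac{\rho}{\sqrt{(z-w)^{2}+(r+\rho)^{2}}}\,K\!\left(\sqrt{\tfrac{4r\rho}{(z-w)^{2}+(r+\rho)^{2}}}\right) \frac{1}{\lambda^{2}}\phi\!\left(\tfrac{(\rho,w)-\xi}{\lambda}\right)\chi_{0}\!\left(\tfrac{|(\rho,w)-\xi|}{\sqrt{\delta(T-t)}}\right)d\rho\,dw,
\]
and we differentiate in $(r,z)$ under the integral sign. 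As in the proof of Proposition \ref{3dGRADIENTExpansion}, the plan is to split $\nabla_{(r,z)}K$ into the three contributions arising from the leading expansion \eqref{ExpansionK1}: the $\ln 4$-constant, the $\ln\sqrt{(z-w)^{2}+(r+\rho)^{2}}$-piece, and the singular $-\ln|(r,z)-(\rho,w)|$-piece, plus a higher order remainder of size $O(|1-x^{2}|\,|\ln(1-x^{2})|)$ that is harmless by \eqref{err1control}--\eqref{err2control}.

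In the region $|(r,z)-\xi|\le \varepsilon$, the first two contributions combine with the factor $\rho/\sqrt{(z-w)^{2}+(r+\rho)^{2}}$ to produce, after using \eqref{err1control} and \eqref{err2control} to freeze the relevant factors at $\xi$, the prefactor $\omega_{1}(r,z)$ of Theorem \ref{Expansionv0} multiplying an integral against $\frac{1}{\lambda^{2}}\phi$; the singular $-\ln|(r,z)-(\rho,w)|$-contribution is precisely the kernel of the two dimensional inverse Laplacian, so its gradient reproduces $\nabla_{(r,z)}\hat\psi$. Hence the leading behaviour in the intermediate range is $\omega_{1}(r,z)\nabla_{(r,z)}\hat{\psi}$. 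In the innermost range $|(r,z)-\xi|\le\delta\sqrt{T-t}$, one uses the bound $|\omega_{1}(r,z)-1|=O(\sqrt{T-t})$ together with the fact that $|\nabla_{(r,z)}\hat\psi|=O(\lambda^{-1}(1+|y|)^{-1}\ln(2+|y|))$ to absorb $(\omega_{1}-1)\nabla_{(r,z)}\hat{\psi}$ into the $O(1)$ remainder.

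The error control requires the analogue of \eqref{logmass} for the source $\frac{1}{\lambda^{2}}\phi$ rather than $\mathcal{E}_{2d}$. Since $3<k\le 4$ and the source has compact support of radius $\lesssim\sqrt{T-t}$, the logarithmic integral is bounded by $\int_{B_{R/\lambda}} |\phi(y)|\,dy + $ a tail contribution, and splitting the domain at $\sqrt{(\rho-r)^2+(z-w)^2}=\beta(t)$ as in Lemma \ref{Second3Dcorrection} and optimizing yields an $O(1)$ bound, using that $\|\phi\|_{L^{1}(\mathbb{R}^{2})}<\infty$ by $k>2$ and that the higher moment needed for the integration by parts in the logarithmic term is also finite by $k>3$. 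For $|(r,z)-\xi|\ge\varepsilon$, we simply invoke \eqref{gradoutsupp} together with $\lambda^{-2}\|\phi(\cdot/\lambda)\chi_{0}\|_{L^{1}(\mathbb{R}^{3})}\lesssim 1$ (one factor of $r$ is absorbed since $r\sim q_{1}$ in the support).

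The main obstacle is the sharpness of the decay assumption $3<k\le 4$: when $k$ is close to $3$, the second moment of $\phi$ may fail to be finite, so the analogue of the second step in the proof of Theorem \ref{Expansionv0} (where we expanded beyond the leading $\ln 4$-term) cannot be carried out directly, and one must instead perform the integration by parts in the $-\ln|(r,z)-(\rho,w)|$-piece to trade a gradient on the kernel for a gradient on $\phi$, thereby gaining an extra power of decay; this is the only delicate point, and it is handled exactly as in the step leading to \eqref{logmass} in the proof of Theorem \ref{Expansionv0}.
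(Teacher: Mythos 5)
Your overall strategy — differentiating the representation of Lemma \ref{ReprLemmaAS} under the integral, expanding $K$ via \eqref{ExpansionK1}, extracting the two–dimensional kernel $-\ln|(r,z)-(\rho,w)|$ and freezing the smooth factors at $\xi$, then invoking \eqref{gradoutsupp} in the far region — is exactly the adaptation of Theorem~\ref{Expansionv0} and Propositions~\ref{3dGRADIENTExpansion}, \ref{3dGRADIENTExpansionpsilambda} that the paper has in mind, and the bookkeeping about integrability ($k>2$ for $L^1$) is sound.

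Two points in your write-up do not hold up as stated. First, the attribution of $\omega_{1}$ is inverted. The prefactor $\omega_{1}(r,z)$ does \emph{not} come from the $\ln 4$ and $\ln\sqrt{(z-w)^{2}+(r+\rho)^{2}}$ pieces: those produce the bounded $\mathcal{V}$-type contribution. Rather, $\omega_{1}$ is the value of $\frac{2\rho}{\sqrt{(z-w)^{2}+(r+\rho)^{2}}}\bigl(1+\frac{1}{4}(1-x^{2})\bigr)$ frozen at $(\rho,w)=\xi$ (via \eqref{err1control}--\eqref{err2control}), and it multiplies precisely the singular $-\ln|(r,z)-(\rho,w)|$ contribution, which is what reproduces $\nabla_{(r,z)}\hat\psi$. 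As written, your decomposition would yield $\nabla\hat\psi + \omega_1\cdot(\text{bounded})$, from which ``hence the leading behaviour is $\omega_{1}\nabla\hat\psi$'' does not logically follow (it happens to agree up to $O(1)$, but for the wrong reason).

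Second, and more substantively, the innermost-region step does not close as argued. You invoke the crude bound $|\omega_{1}-1|=O(\sqrt{T-t})$ together with $|\nabla_{(r,z)}\hat\psi|=O\bigl(\lambda^{-1}(1+|y|)^{-1}\ln(2+|y|)\bigr)$. For $|y|\sim 1$, i.e.\ $|(r,z)-\xi|\sim\lambda$, this gives $(\omega_{1}-1)\nabla\hat\psi = O\bigl(\sqrt{T-t}/\lambda\bigr)$, which is unbounded (recall $\lambda \ll \sqrt{T-t}$). The correct estimate uses the tight vanishing $|\omega_{1}(r,z)-1| = O(|(r,z)-\xi|)$ stated in Theorem~\ref{Expansionv0}, combined with $|\nabla_{(r,z)}\hat\psi|\lesssim (\lambda+|(r,z)-\xi|)^{-1}$ (no logarithm is needed when $k>2$; the $\ln(2+|y|)$ is extraneous here and belongs to the borderline case $k=2$ of Proposition~\ref{3dGRADIENTExpansionpsilambda}), which yields $(\omega_{1}-1)\nabla_{(r,z)}\hat\psi = O\bigl(|(r,z)-\xi|/(\lambda+|(r,z)-\xi|)\bigr) = O(1)$ uniformly in the innermost region. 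Finally, your discussion of the failing second moment when $k\to 3^{+}$ is not actually needed for this proposition, which only estimates a Newtonian potential; the second-moment condition is relevant to inverting $L$ (Lemma~\ref{Lemma51}), not here.
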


\begin{proof}
	The proof follows directly by suitably adapting the arguments used in the proof of Theorem~\ref{Expansionv0}, Proposition~\ref{3dGRADIENTExpansion}, and Proposition~\ref{3dGRADIENTExpansionpsilambda}. We omit the details.
\end{proof}
\section{The outer theory}\label{outerTheorySec}
To construct a solution to the inner-outer gluing system, it is essential to develop an \emph{outer theory}. Specifically, we seek sufficiently precise estimates for the solution of the following problem:
\begin{align}\label{phiouteqtOUTTH}
	\begin{cases}
		\partial_{t} \phi^{o} = \Delta_{(r,z)} \phi^{o} + \frac{1}{r} \partial_{r} \phi^{o} - \nabla_{(r,z)} v_{0} \cdot \nabla_{(r,z)} \phi^{o} + g(r,z,t), & \text{in } (0,T) \times \mathbb{R}^{+} \times \mathbb{R}, \\
		\phi^{o}(\cdot,0) = 0, & \text{in } \mathbb{R}^{+} \times \mathbb{R},
	\end{cases}
\end{align}
where $v_{0}(r,z) = (-\Delta_{\mathbb{R}^{3}}) u_{0}(r,z)$. We emphasize that Proposition~\ref{3dGRADIENTExpansion} will play a crucial role in the proof of the outer theory. \newline
For a function $g : \mathbb{R}^{+} \times \mathbb{R} \times (0,T) \to \mathbb{R}$, we define the norm $\|g\|_{\star,o}$ as the smallest constant $K$ such that the following estimate holds for all $(r,z,t) \in \mathbb{R}^{+} \times \mathbb{R} \times (0,T)$:
\begin{align*}
	|g(r,z,t)| \le K \begin{cases}
		\frac{e^{-a \sqrt{2|\ln(|(r,z) - \xi|^{2} + (T-t))|}}}{\left(|(r,z) - \xi|^{2} + (T-t)\right)^{2} |\ln((T-t) + |(r,z) - \xi|^{2})|^{b}}, & \text{if } |(r,z) - \xi| \le \sqrt{T}, \\
		\frac{e^{-a \sqrt{2|\ln T|}}}{T^{2} |\ln T|^{b}}, & \text{if } \sqrt{T} \le |(r,z) - \xi| \le M, \\
		\frac{e^{-a \sqrt{2|\ln T|}}}{T^{2} |\ln T|^{b}} e^{- \frac{|(r,z)|^{2}}{4(t+1)}}, & \text{if } |(r,z) - \xi| \ge M.
	\end{cases}
\end{align*}
Here, the constants $a > 0$ and $b \in \mathbb{R}$ will be fixed later in Section~\ref{proofThm1Sec}.
We also introduce a norm for the solution itself. Specifically, we define the norm $\|\phi\|_{\star\star,o}$ as the smallest constant $K$ such that the following estimate holds:
\begin{align}\label{OuterSolExpectedEstimates}
	|\phi(r,z,t)| + (\lambda + |(r,z) - \xi|) |\nabla_{(r,z)} \phi| \le K \begin{cases}
		\frac{e^{-a\sqrt{2|\ln(|(r,z) - \xi|^{2} + (T - t))|}}}{\left(|(r,z) - \xi|^{2} + (T - t)\right) |\ln((T - t) + |(r,z) - \xi|^{2})|^{b}}, & \text{if } |(r,z) - \xi| \le \sqrt{T}, \\
		\frac{e^{-a\sqrt{2|\ln T|}}}{T |\ln T|^{b}}, & \text{if } \sqrt{T} \le |(r,z) - \xi| \le M, \\
		\frac{e^{-a\sqrt{2|\ln T|}}}{T |\ln T|^{b}} e^{- \frac{|(r,z)|^{2}}{4(t+1)}}, & \text{if } |(r,z) - \xi| \ge M.
	\end{cases}
\end{align}
We remark that the approach used in \cite{BDdPM} differs from ours, as the approximation $\nabla_{x} v_{0} \approx \frac{1}{\lambda} \nabla_{y} \Gamma_{0}(y)$ was valid throughout the entire space in their setting. In contrast, here the approximation only holds locally, which leads to a fundamentally different behavior away from the singularity.
\begin{theorem}\label{outerSolvinTheory}
	Assume $a > 0$, $b \in \mathbb{R}$, and let $\lambda$, $\xi$ satisfy \eqref{EstiLambda} and \eqref{EstiPar}. Then, there exists a constant $C$ such that for $T$ sufficiently small, and for any $g$ with $\|g\|_{o} < \infty$, there exists a solution $\phi^{o} = \mathcal{T}_{p}^{o}[g]$ to problem \eqref{phiouteqtOUTTH}. This solution defines a linear operator in $g$ and satisfies the estimate
	\begin{align*}
		\|\phi^{o}\|_{\star\star,o} \le C \|g\|_{\star,o}.
	\end{align*}
\end{theorem}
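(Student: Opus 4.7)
The plan is to establish the estimate $\|\phi^o\|_{\star\star,o}\le C\|g\|_{\star,o}$ by a supersolution argument compatible with the three-scale structure of both $\|\cdot\|_{\star\star,o}$ and of the drift $\nabla_{(r,z)}v_0$ delivered by Proposition \ref{3dGRADIENTExpansion}, and to produce the solution itself by a standard approximation-compactness scheme: I would first solve \eqref{phiouteqtOUTTH} on an exhausting sequence of cylinders $B_n\times(0,T)$ with zero lateral Dirichlet data by classical linear parabolic theory, and then use the uniform barrier together with interior Schauder estimates to pass to the limit via a diagonal extraction.

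The heart of the argument is the construction of a universal supersolution $\bar\phi$ of the form of the right-hand side of \eqref{OuterSolExpectedEstimates}, multiplied by a large constant, and glued smoothly across the three zones $R_1=\{|(r,z)-\xi|\le\sqrt{T}\}$, $R_2=\{\sqrt{T}\le|(r,z)-\xi|\le M\}$ and $R_3=\{|(r,z)-\xi|\ge M\}$. In $R_1$, Proposition \ref{3dGRADIENTExpansion} gives $\nabla_{(r,z)}v_0=\nabla_{(r,z)}\Gamma_0+O(1)$, so the operator differs from its two-dimensional counterpart $\partial_t-\Delta+\nabla_{(r,z)}\Gamma_0\cdot\nabla_{(r,z)}$ only by the axisymmetric drift $\tfrac{1}{r}\partial_r$ (bounded since $r\sim q_1$) and by an $O(1)$ correction; the supersolution computation then reduces to the one carried out for the analogous two-dimensional statement in \cite{BDdPM}, with the extra terms absorbed by the factor $e^{-a\sqrt{2|\ln(|(r,z)-\xi|^2+(T-t))|}}$. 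In $R_2$ the weight is constant in $(r,z)$ and $|\nabla_{(r,z)}v_0|\lesssim|\ln|(r,z)-\xi||$, so a barrier mildly growing in $t$ suffices provided $T$ is small. In $R_3$, Proposition \ref{3dGRADIENTExpansion} gives that $\nabla_{(r,z)}v_0$ is bounded and decays like $|(r,z)|^{-2}$, and a direct computation shows that $e^{-|(r,z)|^2/4(t+1)}$ is a supersolution of the backward heat equation up to a drift correction of size $O(M^{-1})$, absorbed by choosing $M$ large. Once $\bar\phi$ is a supersolution, the comparison principle applied to $\pm\phi_n^o\mp C\|g\|_{\star,o}\bar\phi$ yields the pointwise bound, and the gradient estimate in \eqref{OuterSolExpectedEstimates} then follows from parabolic Schauder estimates on cylinders of the natural scale, namely $\sqrt{T-t}+|(r,z)-\xi|$ in $R_1$, a fixed scale in $R_2$ and $\sqrt{t+1}$ in $R_3$.

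The main obstacle I expect is the matching of the three supersolutions across the interfaces, and above all the handling of the transition zone where $\nabla_{(r,z)}v_0$ changes from $\nabla_{(r,z)}\Gamma_0$ to $\omega_1\nabla_{(r,z)}\Gamma_0$: although $|\omega_1-1|=O(\sqrt{(r-q_1)^2+z^2})$ by Theorem \ref{Expansionv0}, after multiplication by $|\nabla_{(r,z)}\Gamma_0|\sim|(r,z)-\xi|^{-1}$ this correction is borderline with respect to the decay of the barrier, and a careful bookkeeping exploiting the smallness of $\sqrt{T}$ and of $\varepsilon$ is needed to absorb it into the logarithmic factor of the weight. The other transition, at $|(r,z)-\xi|=M$, is considerably simpler, because the Gaussian decay of the barrier in $R_3$ dwarfs any polynomial contribution coming from the Newtonian asymptotics of $\nabla_{(r,z)}v_0$.
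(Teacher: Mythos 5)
Your overall strategy---a supersolution built zone by zone to mirror the weight defining $\|\cdot\|_{\star\star,o}$, the comparison principle on an exhaustion of the domain, and rescaled Schauder estimates for the gradient bound---is exactly the paper's. The inner piece (the two-dimensional barrier of \cite{BDdPM} cut off at $|(r,z)-\xi|\sim\sqrt{T}$), the intermediate piece growing linearly in $t$, and the Gaussian piece at infinity all appear in the paper as $\phi_0,\phi_1,\phi_2$, and the interface errors are absorbed exactly as you suggest, by a hierarchical choice of the constants multiplying each piece. Your diagnosis of where the difficulty sits is, however, off in one respect: the $\omega_1$ transition you single out as the main obstacle is not borderline. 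Since $|\nabla_{(r,z)}v_0-\nabla_{(r,z)}\Gamma_0|\lesssim|\ln|(r,z)-\xi||$ in the intermediate annulus, its product with $|\nabla_{(r,z)}\phi_0|\lesssim\bigl((T-t)+|(r,z)-\xi|^2\bigr)^{-3/2}$ is $O(\sqrt{T}\,|\ln T|)$ times the main positive output of $\phi_0$, and is absorbed simply by taking $T$ small.

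The genuine subtlety, which your three-term barrier does not address, is the interaction of the $O(1/\lambda)$ drift near the ring with those barrier components that are \emph{not} localized away from $\xi$. The paper's Gaussian $\phi_2$ is global (it has to reach the outer cutoff region of $\phi_1$ to dominate the cutoff error there pointwise), and near the ring the term $\nabla_{(r,z)}v_0\cdot\nabla_{(r,z)}\phi_2$ is of order $\frac{1}{\lambda}\frac{|y|}{1+|y|^2}$ times the coefficient of $\phi_2$, i.e.\ it behaves like $(\lambda^2+|(r,z)-\xi|^2)^{-1/2}$; in the core $|(r,z)-\xi|\lesssim\lambda\ll\sqrt{T-t}$ this cannot be absorbed by the output of $\phi_0$, which only sees the scale $T-t$. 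This is what forces the paper to introduce a fourth component $\phi_3\sim\frac{\sqrt{T-t}}{M^{2}}\frac{e^{-a\sqrt{2|\ln(T-t)|}}}{\lambda^{2}+|(r,z)-\xi|^{2}}$, cut off at scale $\sqrt{\delta(T-t)}$: its drift term has a favorable sign, its output $(\lambda^2+|(r,z)-\xi|^2)^{-2}$ cancels precisely this singular error, and one then checks that $\phi_3$ itself is dominated by $\phi_0$ so the final estimate is unaffected. If instead you insist on confining the Gaussian to $\{|(r,z)-\xi|\ge M\}$, you must still exhibit a positive contribution at the outer edge of the intermediate zone that dominates the cutoff error there, and verify that whatever supplies it does not reintroduce a singular drift error near $\xi$. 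As written, your barrier is incomplete: you need either the fourth component or an explicit argument that your localization makes it unnecessary.
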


\begin{proof}
We proceed to construct a barrier. Before doing so, we introduce three technical simplifications that will streamline our analysis:
\begin{itemize}
	\item Throughout the following, we assume $\xi(t) = \xi=(q_{1},q_{2}) \in (0,\infty)\times\mathbb{R}$. The time derivatives of $q$ exhibit significantly faster temporal decay and can therefore be neglected;
	\item We omit the logarithmic terms on the right-hand side of \eqref{phiouteqtOUTTH}, since they represent lower-order contributions that can be incorporated into the barrier;
	\item We adopt the following approximation:
	\begin{align}\label{SimplifiedExpansionGradient}
		\nabla_{(r,z)} v_{0}(r,z,t) = 
		\begin{cases}
			\nabla_{(r,z)} \Gamma_0(r,z)  & \text{if } |(r,z) - \xi| \leq 2\sqrt{\delta (T - t)}, \\[5pt]
			\omega_1(r,z,t)\, \nabla_{(r,z)} \Gamma_0(r,z) +O(|\ln(|(r,z)-\xi)|) & \text{if } 2\sqrt{\delta (T - t)} \leq |(r,z) - \xi| \leq \varepsilon, \\[5pt]
			O(1) & \text{if } \varepsilon \leq |(r,z) - \xi| \leq M, \\[5pt]
			-4\pi q_1  \frac{(r,z)}{|(r,z)|^3} & \text{if } |(r,z) - \xi| \geq M.
		\end{cases}
	\end{align}
	The contributions we are disregarding are explicitly detailed in Proposition~\ref{3dGRADIENTExpansion} and can be absorbed into parts of the barrier.
\end{itemize}
	The barrier we construct will be made by \emph{four} explicit functions whose properties will be described in what follows.\newline
\underline{The first term:}  
We consider the leading term in the barrier constructed in Proposition 12.1 of \cite{BDdPM}, given by
\begin{align*}
	\phi_{0} = \frac{e^{-a \sqrt{2|\ln\left(|(r,z)-\xi|^{2} + (T - t)\right)|}}}{(T - t) + |(r,z) - \xi|^{2}} \, \chi_{0}\left( \frac{|(r,z) - \xi|}{\sqrt{T}} \right).
\end{align*}
As anticipated, the behavior of the system remains predominantly two dimensional in nature. We note the following identity:
\begin{align*}
	-\nabla_{y} \Gamma_{0} &= 4\lambda \frac{(r, z) - \xi}{|(r, z) - \xi|^{2}} + 4 \frac{y}{1 + |y|^{2}} - 4 \frac{y}{|y|^{2}} \\
	&= 4\lambda \frac{(r, z) - \xi}{|(r, z) - \xi|^{2}} - 4 \frac{y}{|y|^{2}}  \frac{1}{1 + |y|^{2}}.
\end{align*}
As a result, the final term can be neglected, as its sign renders its contribution negligible. Recall that the six dimensional radial Laplacian is given by
\[
\Delta_{6} = \partial_{\zeta}^{2} + \frac{5}{\zeta} \partial_{\zeta}, \quad \text{with } \zeta = |(r, z) - \xi|.
\]
Then, the following estimate holds:
\begin{align*}
	\left(\partial_{t} - \Delta_{6}\right)\left( \frac{e^{-a\sqrt{2|\ln(|(r, z) - \xi|^{2} + (T - t)|}}}{(T - t) + |(r, z) - \xi|^{2}} \right)
	\gtrsim  \frac{e^{-a\sqrt{2|\ln((T - t) + |(r, z) - \xi|^{2})|}}}{\left((T - t) + |(r, z) - \xi|^{2}\right)^{2}},
\end{align*}
for all points satisfying \( |(r, z) - \xi| \leq \sqrt{T} \).
In the same region, we observe the following estimates:
\begin{align*}
	\left| \frac{1}{r} \partial_{r} \left( \frac{e^{-a\sqrt{2|\ln((T - t) + |(r, z) - \xi|^{2})|}}}{(T - t) + |(r, z) - \xi|^{2}} \right) \right|
	\lesssim \sqrt{T}  \frac{e^{-a\sqrt{2|\ln((T - t) + |(r, z) - \xi|^{2})|}}}{\left( \lambda^{2} + |(r, z) - \xi|^{2} \right)^{2}},
\end{align*}
and
\begin{align*}
	\left| \left( \nabla_{(r, z)} v_{0} - \nabla_{(r, z)} \Gamma_{0} \right) \cdot \nabla_{(r, z)} \left( \frac{e^{-a\sqrt{2|\ln(|(r, z) - \xi|^{2} + (T - t)|}}}{(T - t) + |(r, z) - \xi|^{2}} \right) \right|
	\lesssim \sqrt{T} |\ln T|  \frac{e^{-a\sqrt{2|\ln((T - t) + |(r, z) - \xi|^{2})|}}}{\left( \lambda^{2} + |(r, z) - \xi|^{2} \right)^{2}},
\end{align*}
where the logarithmic factor arises from the intermediate expansion in~\eqref{SimplifiedExpansionGradient}.
Therefore, assuming \( T \ll 1 \) and \( |(r, z) - \xi| \le \sqrt{T} \), we conclude that
\begin{align*}
	\left( \partial_{t} - \Delta_{(r, z)} - \frac{1}{r} \partial_{r} + \nabla_{(r, z)} v_{0} \cdot \nabla_{(r, z)} \right) 
	\left( \frac{e^{-a\sqrt{2|\ln(|(r, z) - \xi|^{2} + (T - t)|}}}{(T - t) + |(r, z) - \xi|^{2}} \right)
	\gtrsim \frac{e^{-a\sqrt{2|\ln((T - t) + |(r, z) - \xi|^{2})|}}}{\left((T - t) + |(r, z) - \xi|^{2}\right)^{2}}.
\end{align*}
As a consequence, we obtain the following estimate:
\begin{align*}
	\left( \partial_{t} - \Delta_{(r,z)} - \frac{1}{r} \partial_{r} + \nabla_{(r,z)} v_{0} \cdot \nabla_{(r,z)} \right) \phi_{0}
	\gtrsim \frac{e^{-a\sqrt{2|\ln((T - t) + |(r,z) - \xi|^{2})|}}}{\left((T - t) + |(r,z) - \xi|^{2}\right)^{2}} \cdot \chi_{0} \left( \frac{|(r,z) - \xi|}{\sqrt{T}} \right) 
	+ \mathcal{E}_{0}(r,z,t),
\end{align*}
where the error term \(\mathcal{E}_{0}(r,z,t)\) is given by
\begin{align}\label{Err0Out}
	\mathcal{E}_{0}(r,z,t) \lesssim \frac{e^{-a\sqrt{|\ln T|}}}{T^{2}} \left( \left| \chi_{0}'\left( \frac{|(r,z) - \xi|}{\sqrt{T}} \right) \right| + \left| \chi_{0}''\left( \frac{|(r,z) - \xi|}{\sqrt{T}} \right) \right|  \right).
\end{align}
\underline{The second term:} From this point onward, the construction of the barrier deviates significantly from that in \cite{BDdPM}. In particular, in this intermediate region, we consider a barrier that is essentially constant:
\begin{align*}
	\phi_{1} = (t + T)  \frac{e^{-a\sqrt{2|\ln T|}}}{T^{2}}  \chi_{0} \left( \frac{|(r,z)|}{2(M + |\xi|)} \right).
\end{align*}
We observe that
\begin{align*}
	\left| \left( \Delta_{(r,z)} + \frac{1}{r} \partial_{r} \right) \phi_{1} \right| 
	= |\Delta_{x} \phi_{1}| 
	\lesssim \frac{e^{-a\sqrt{2|\ln T|}}}{T}  \frac{1}{M^{2}} \left( \left| \chi_{0}' \left( \frac{|(r,z)|}{2(M + |\xi|)} \right) \right| + \left| \chi_{0}'' \left( \frac{|(r,z)|}{2(M + |\xi|)} \right) \right| \right).
\end{align*}
Moreover, we have
\begin{align*}
	\left| \nabla_{(r,z)} v_{0} \cdot \nabla_{(r,z)} \phi_{1} \right| 
	\lesssim \frac{e^{-a\sqrt{2|\ln T|}}}{T}  \frac{1}{M^{3}} \left| \chi_{0}' \left( \frac{|(r,z)|}{2(M + |\xi|)} \right) \right|.
\end{align*}
We have just shown that
\begin{align*}
	\left( \partial_{t} - \Delta_{(r,z)} - \frac{1}{r} \partial_{r} + \nabla_{(r,z)} v_{0} \cdot \nabla_{(r,z)} \right) \phi_{1} 
	= \frac{e^{-a\sqrt{2|\ln T|}}}{T^{2}} \chi_{0} \left( \frac{|(r,z)|}{2(M + |\xi|)} \right) + \mathcal{E}_{1}(r, z, t),
\end{align*}
where the error term satisfies the bound
\begin{align} \label{Err1Out}
	\mathcal{E}_{1}(r, z, t) \lesssim \frac{e^{-a\sqrt{2|\ln T|}}}{T}  \frac{1}{M^{2}} \left| \chi_{0}' \left( \frac{|(r,z)|}{2(M + |\xi|)} \right) \right|.
\end{align}
 \underline{The third term:} We introduce a term with exponential decay, designed to absorb the error introduced previously:
\begin{align*}
	\phi_{2} = \frac{e^{-a\sqrt{2|\ln T|}}}{T} \frac{1}{M^{2}} e^{-\frac{|(r,z)|^{2}}{4(t + 1)}}.
\end{align*}
Let us consider $
		\hat{\phi}_{2}=e^{-\frac{|(r,z)|^{2}}{4(t+1)}}$, we see that
			\begin{align*}
			(\partial_{t}-\Delta_{(r,z)}-\frac{1}{r}\partial_{r})\hat{\phi}_{2}\gtrsim \frac{1}{(t+1)}e^{-\frac{|(r,z)|^{2}}{4(t+1)}}.
		\end{align*}
Now we observe that
\begin{align*}
	\nabla_{(r,z)} v_{0} \cdot \nabla_{(r,z)} \hat{\phi}_{2} = &\, 4\pi q_{1} \frac{1}{2(t+1)} e^{-\frac{|(r,z)|^{2}}{4(t+1)}}  \frac{1}{|(r,z)|} \left(1 - \chi_{0}\left( \frac{|(r,z)-\xi|}{M} \right)\right) \\
	& + O\left( e^{-\frac{|(r,z)|^{2}}{4(t+1)}} \frac{|(r,z)|}{t+1} \right) \chi_{0} \left( \frac{|(r,z)-\xi|}{M} \right) \\
	& + O\left( \frac{1}{\lambda}  \frac{|y|}{1 + |y|^{2}} \chi_{0} \left( \frac{|(r,z)-\xi|}{\varepsilon} \right) \right).
\end{align*}
We then obtain
\begin{align*}
	\left( \partial_{t} - \Delta_{(r,z)} - \frac{1}{r} \partial_{r} + \nabla_{(r,z)} v_{0} \cdot \nabla_{(r,z)} (\cdot) \right) \phi_{2}
	\geq C  \frac{e^{-a\sqrt{2|\ln T|}}}{T}  \frac{1}{M^{2}} e^{-\frac{|(r,z)|^{2}}{4(t+1)}} + \mathcal{E}_{2}^{(1)}(r,z,t) + \mathcal{E}_{2}^{(2)}(r,z,t).
\end{align*}
We have the following estimates for the error terms:
\begin{align}
	|\mathcal{E}_{2}^{(1)}(r,z,t)| &\lesssim \frac{1}{\lambda}  \frac{|y|}{1 + |y|^{2}}  \frac{e^{-a\sqrt{2|\ln T|}}}{M^{2} T} \, \chi_{0} \left( \frac{|(r,z) - \xi|}{\varepsilon} \right), \label{Error3Out1} \\
	|\mathcal{E}_{2}^{(2)}(r,z,t)| &\lesssim \frac{e^{-a\sqrt{2|\ln T|}}}{M^{2} T} \, \chi_{0} \left( \frac{|(r,z)|}{M + |\xi|} \right). \label{Error3Out2}
\end{align}
Let us investigate the behaviour of \( \mathcal{E}_{2}^{(1)}(r,z,t) \). If \( \sqrt{T} \leq |(r,z) - \xi| \leq \varepsilon \)
\begin{align*}
	\frac{1}{\lambda}  \frac{|y|}{1 + |y|^{2}}  \frac{e^{-a\sqrt{2|\ln T|}}}{M^{2} T}
	&\lesssim \frac{1}{M^{2}}  \frac{\sqrt{T}}{\sqrt{\lambda^{2} + |(r,z) - \xi|^{2}}}  \frac{e^{-a\sqrt{2|\ln T|}}}{T^{3/2}} \\
	&\lesssim \frac{\sqrt{T}}{M^{2}}  \frac{e^{-a\sqrt{2|\ln T|}}}{ T^{2}}.
\end{align*}
If \( \sqrt{\delta (T - t)} \leq |(r,z) - \xi| \leq \sqrt{T} \), we observe that
\begin{align*}
	\frac{1}{\lambda}  \frac{|y|}{1 + |y|^{2}}  \frac{e^{-a\sqrt{2|\ln T|}}}{M^{2} T}
	&\lesssim \frac{1}{M^{2}}  \frac{\sqrt{T}}{\sqrt{\lambda^{2} + |(r,z) - \xi|^{2}}}  \frac{e^{-a\sqrt{2|\ln T|}}}{T^{3/2}} \\
	&\lesssim \frac{\sqrt{T}}{M^{2}}  \frac{e^{-a\sqrt{2|\ln((T - t) + |(r,z) - \xi|^{2})|}}}{\left((T - t) + |(r,z) - \xi|^{2}\right)^{2}}.
\end{align*}
When \( |(r,z) - \xi| \leq \sqrt{\delta (T - t)} \) istead we have
\begin{align*}
	\frac{1}{\lambda} \frac{|y|}{1 + |y|^{2}} \ \frac{e^{-a\sqrt{2|\ln T|}}}{M^{2} T}
	&\lesssim \frac{1}{\sqrt{\lambda^{2} + |(r,z) - \xi|^{2}}}  \frac{e^{-a\sqrt{2|\ln T|}}}{M^{2} T} \\
	&\lesssim \frac{\sqrt{T - t}}{M^{2}}  \frac{e^{-a\sqrt{2|\ln(T - t)|}}}{\left( \lambda^{2} + |(r,z) - \xi|^{2} \right)^{2}}.
\end{align*}
It turns out to be useful, as a consequence of this analysis, to decompose the error \eqref{Error3Out1} into $\mathcal{E}_{2}^{(1)}(r,z,t)=\mathcal{E}_{2}^{(1,0)}(r,z,t)+\mathcal{E}_{2}^{(1,1)}(r,z,t)+\mathcal{E}_{2}^{(1,2)}(r,z,t)$ with
\begin{align}\label{OutError2Deco0}
	\mathcal{E}_{2}^{(1,0)}(r,z,t)\lesssim\frac{\sqrt{T}}{M^{2}}  \frac{e^{-a\sqrt{2|\ln T|}}}{ T^{2}}
\end{align}
\begin{align}\label{OutError2Deco}
	\mathcal{E}_{2}^{(1,1)}(r,z,t)\lesssim\frac{\sqrt{T}}{M^{2}}  \frac{e^{-a\sqrt{2|\ln((T - t) + |(r,z) - \xi|^{2})|}}}{\left((T - t) + |(r,z) - \xi|^{2}\right)^{2}}, \ \ \ \mathcal{E}_{2}^{(1,2)}(r,z,t)\lesssim \frac{\sqrt{T - t}}{M^{2}}  \frac{e^{-a\sqrt{2|\ln(T - t)|}}}{\left( \lambda^{2} + |(r,z) - \xi|^{2} \right)^{2}}.
\end{align}
This last term is more challenging to absorb due to its singular nature. Therefore, we introduce the final term in the barrier construction to control it.\newline
\underline{The fourth term:} We consider now 
\begin{align*}
	\phi_{3}=\frac{\sqrt{T-t}}{M^{2}}\frac{e^{-a\sqrt{2|\ln(T-t)|}}}{\lambda^{2}+|(r,z)-\xi|^{2}}\chi_{0}(\frac{|(r,z)-\xi|}{\sqrt{\delta(T-t)}})
\end{align*}
We introduce the notation $\hat{\phi}_{3} = \frac{1}{\lambda^{2}+|(r,z)-\xi|^{2}}$. With this, we compute
\begin{align*}
	\frac{1}{\lambda}\nabla_{y}\Gamma_{0} \cdot \nabla_{(r,z)} \hat{\phi}_{3} &= 8 \frac{|(r,z)-\xi|^{2}}{(\lambda^{2}+|(r,z)-\xi|^{2})^{3}} \ge 0.
\end{align*}
In particular, when $|(r,z)-\xi|\le 2\sqrt{\delta(T-t)}$, it follows that
\begin{align}
	\left(-\Delta_{(r,z)} - \frac{1}{r} \partial_{r} + \nabla v_{0} \cdot \nabla \right) \hat{\phi}_{3} \gtrsim \frac{1}{(\lambda^{2}+|(r,z)-\xi|^{2})^{2}}.
\end{align}
Moreover, we observe the following bound:
\begin{align*}
	\partial_{t} \phi_{3} \lesssim \delta \frac{\sqrt{T-t}}{M^{2}} \frac{e^{-a\sqrt{2|\ln(T-t)|}}}{(\lambda^{2}+|(r,z)-\xi|^{2})^{2}}.
\end{align*}
As a result, we deduce that
\begin{align*}
	\left(\partial_{t} - \Delta_{(r,z)} - \frac{1}{r} \partial_{r} + \nabla_{(r,z)} v_{0} \cdot \nabla_{(r,z)}\right) \phi_{3} \gtrsim \frac{\sqrt{T-t}}{M^{2}} \frac{e^{-a\sqrt{2|\ln(T-t)|}}}{(\lambda^{2}+|(r,z)-\xi|^{2})^{2}} \chi_{0}\left(\frac{|(r,z)-\xi|}{\sqrt{\delta(T-t)}}\right) + \mathcal{E}_{3}(r,z),
\end{align*}
where the error term $\mathcal{E}_{3}(r,z)$ satisfies the estimate
\begin{align*}
	|\mathcal{E}_{3}(r,z)| \lesssim \frac{1}{M^{2}} \frac{e^{-a\sqrt{2|\ln(T-t)|}}}{\delta^{4}(T-t)^{3/2}} \left( \left| \chi_{0}'\left(\frac{|(r,z)-\xi|}{\sqrt{\delta(T-t)}}\right) \right| + \left| \chi_{0}''\left(\frac{|(r,z)-\xi|}{\sqrt{\delta(T-t)}}\right) \right| \right).
\end{align*}
We now consider the barrier previously constructed, given by $\Phi = \sum_{i=0}^{3} c_{i} \phi_{i}$, where the constants $c_{i}$ are positive and will be chosen appropriately. \newline
We note the following estimate:
\begin{align*}
	\left(\partial_{t} - \Delta_{(r,z)} - \frac{1}{r} \partial_{r} + \nabla_{(r,z)} v_{0} \cdot \nabla_{(r,z)} \right) \Phi \ge 
	&\; c_{0} \frac{e^{-a\sqrt{2|\ln((T-t)+|(r,z)-\xi|^{2})|}}}{\left((T-t) + |(r,z)-\xi|^{2}\right)^{2}} \chi_{0}\left( \frac{|(r,z)-\xi|}{\sqrt{T}} \right) +  \\
	&+ c_{1} \frac{e^{-a\sqrt{2|\ln T|}}}{T^{2}} \chi_{0}\left( \frac{|(r,z)|}{2(M+|\xi|)} \right) +  \\
	&+ c_{2} \frac{e^{-a\sqrt{2|\ln T|}}}{T}  \frac{1}{M^{2}} e^{-\frac{|(r,z)|^{2}}{4(t+1)}}  +  \\
	&+ c_{3} \frac{\sqrt{T-t}}{M^{2}} \frac{e^{-a\sqrt{2|\ln(T-t)|}}}{\left( \lambda^{2} + |(r,z)-\xi|^{2} \right)^{2}} \chi_{0}\left( \frac{|(r,z)-\xi|}{\delta \sqrt{T-t}} \right) +\\
	& +c_{0}\mathcal{E}_{0}(r,z,t)+c_{1}\mathcal{E}_{1}(r,z,t)+c_{2}\mathcal{E}_{2}^{(1,0)}(r,z,t)+c_{2}\mathcal{E}_{2}^{(1,1)}(r,z,t)+\\
	&+c_{2}\mathcal{E}_{2}^{(1,2)}(r,z,t)+c_{2}\mathcal{E}_{(2)}^{2}(r,z,t)+c_{3}\mathcal{E}_{3}(r,z,t)
\end{align*}
It remains to determine the constants $c_{i}$. The strategy we adopt is as follows:
\begin{itemize}
	\item Choose $c_{3} > \|g\|_{o}$ so that the term $c_{2} \mathcal{E}_{2}^{(1,2)}(r,z)$ is canceled;
	\item Then select $c_{2} > \|g\|_{o}$ to eliminate the term $c_{1} \mathcal{E}_{1}(r,z)$;
	\item Finally, take $c_{1} > \|g\|_{o}$ to cancel $c_{0} \mathcal{E}_{0}(r,z)$.
\end{itemize}
The remaining error terms are $c_{2} \mathcal{E}_{2}^{(2)}(r,z,t)$, $c_{2}\mathcal{E}_{2}^{(1,0)}(r,z,t)$, $c_{2}\mathcal{E}_{2}^{(1,1)}(r,z,t)$ and $c_{3} \mathcal{E}_{3}(r,z,t)$. The first of these can be controlled by the contribution of $\phi_{1}$, provided $T$ is taken sufficiently small. Similarly, the second and the third can be absorbed by the contribution of $\phi_{0}$ and $\phi_{1}$. \newline
As a final observation we want to show that the contribution of $\phi_{3}$ is not effecting the expected main order term that is given by $\phi_{0}$: 
	\begin{align*}
		(T-t)\frac{e^{-a\sqrt{2|\ln(T-t)|}}}{\lambda^{2}+|(r,z)-\xi|^{2}}\lesssim\frac{e^{-a\sqrt{2|\ln(|(r,z)-q)|^{2}+(T-t)|}}}{(T-t)+|(r,z)-\xi|^{2}} \ \ \ \ \text{if }|(r,z)-\xi|\le 2\sqrt{\delta(T-t)}.
	\end{align*}
The proof is concluded by rescaling and applying standard elliptic estimates.	
\end{proof}
In the rest of this section, we derive an estimate that will be used to control certain errors in Section \ref{proofThm1Sec}. 
\begin{proposition}\label{3dgradientouter}
	Assume that \eqref{EstiLambda} and \eqref{EstiLambda} are satisfied. Let $\phi^{o}$ be such that $\|\phi^{o}\|_{\star\star,o}<\infty$, and define $\hat{\psi}^{o} = (-\Delta_{\mathbb{R}^{2}})^{-1}\phi^{o}$. Let \( \psi^{o} \) be the corresponding solution, determined by the Newtonian potential, to the equation
	\begin{align*}
		-\Delta_{(r,z)} \psi^{o} - \frac{1}{r} \, \partial_{r} \psi^{o}
		= \phi^{o}, 
		\quad (r,z) \in \mathbb{R}^{+} \times \mathbb{R}.
	\end{align*}
	Then, for $T$ sufficiently small and $M > 0$ sufficiently large, the gradient of \( \psi^{o} \) admits the following asymptotic expansion:
	\begin{align}\label{ExpansionGradpsiout}
		\nabla_{(r,z)} \psi^{o}(r,z,t) = 
		\begin{cases}
			\nabla_{(r,z)} \hat{\psi}^{o}(r,z) + O(1) & \text{if } |(r,z) - \xi| \leq 2\sqrt{\delta (T - t)}, \\[5pt]
			\omega_1(r,z,t)\, \nabla_{(r,z)} \hat{\psi}^{o}(r,z) + O(|\hat{\psi}^{o}|) & \text{if } 2\sqrt{\delta (T - t)} \leq |(r,z) - \xi| \leq \sqrt{T}, \\[5pt]
			O\left(\frac{e^{-a\sqrt{2|\ln T|}}}{T |\ln T|^{b}}\right) & \text{if } |(r,z) - \xi| \geq \sqrt{T}.
		\end{cases}
	\end{align}
\end{proposition}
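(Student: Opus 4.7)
The plan is to adapt the four-step strategy developed in Theorem~\ref{Expansionv0}, Proposition~\ref{3dGRADIENTExpansion}, Proposition~\ref{3dGRADIENTExpansionpsilambda}, and Proposition~\ref{ExpansionGradInner} to the outer setting. I would begin by invoking Lemma~\ref{ReprLemmaAS} to write
$$
\psi^{o}(r,z) = \frac{1}{\pi}\int_{\mathbb{R}}\int_{0}^{\infty}\frac{\phi^{o}(\rho,w)\,\rho}{\sqrt{(z-w)^{2}+(r+\rho)^{2}}}\,K\!\left(\sqrt{\frac{4r\rho}{(z-w)^{2}+(r+\rho)^{2}}}\right)d\rho\,dw,
$$
and then differentiate in $(r,z)$ to obtain the corresponding representation for $\nabla_{(r,z)}\psi^{o}$.

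First, I would split $\phi^{o}=\phi^{o}\,\tilde{\chi}_{\sqrt{T}}+\phi^{o}(1-\tilde{\chi}_{\sqrt{T}})$, with $\tilde{\chi}_{\sqrt{T}}$ a standard cut-off at scale $\sqrt{T}$ around $\xi$. For $|(r,z)-\xi|\ge \sqrt{T}$, the outer estimate~\eqref{OuterSolExpectedEstimates}, combined with the Gaussian tail, yields a uniform bound $\|\phi^{o}\|_{L^{1}(\mathbb{R}^{3})}\lesssim e^{-a\sqrt{2|\ln T|}}/(T|\ln T|^{b})$ (after including the Jacobian $2\pi r$). Applying the kernel estimate~\eqref{gradoutsupp} together with Lemma~\ref{Pb310Salsa3D} and Corollary~\ref{UniqueDecayCoro} then produces the bound claimed in the third regime of~\eqref{ExpansionGradpsiout}.

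Next, for the near-ring part, in the relevant region one has $1-k^{2}=\frac{(z-w)^{2}+(r-\rho)^{2}}{(z-w)^{2}+(r+\rho)^{2}}\ll 1$, so I can insert the asymptotic expansion~\eqref{ExpansionK1} of $K$ and separate the principal logarithmic kernel, whose singular part reproduces $-\frac{1}{2\pi}\ln|(r,z)-(\rho,w)|$, from the residual algebraic factor. Using the elementary estimates~\eqref{err1control}--\eqref{err2control} to replace $\rho,w$ by $q_{1},0$ up to $O(\sqrt{T})$, the leading contribution organizes into $\omega_{1}(r,z)\,\hat{\psi}^{o}(r,z)$ plus an $O(|\hat{\psi}^{o}|)$ error, which controls the second-order coefficient in~\eqref{ExpansionK1} as well as the auxiliary log-integrals along the lines of~\eqref{logmass}. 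Differentiating in $(r,z)$ and performing integration by parts in the radial and vertical variables, as in the proof of Proposition~\ref{3dGRADIENTExpansion}, transfers these bounds to $\nabla_{(r,z)}\psi^{o}$ and yields the intermediate regime of~\eqref{ExpansionGradpsiout}. The contribution from $\frac{1}{r}\partial_{r}\psi^{o}$, which measures the departure between the three- and two-dimensional Laplacians, is absorbed via a direct analogue of Lemma~\ref{Second3Dcorrection}. Finally, for $|(r,z)-\xi|\le 2\sqrt{\delta(T-t)}$ the identity $|\omega_{1}(r,z)-1|=O(\sqrt{(r-q_{1})^{2}+z^{2}})$ shows that replacing $\omega_{1}\nabla\hat{\psi}^{o}$ by $\nabla\hat{\psi}^{o}$ costs only an $O(1)$ error, producing the first regime.

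The main obstacle I anticipate is controlling the logarithmic remainders analogous to~\eqref{logmass} for $\phi^{o}$, which is not compactly supported but only satisfies the weighted outer bound~\eqref{OuterSolExpectedEstimates}. This requires splitting the angular integration at an auxiliary scale $\beta(t)$ (chosen as in Lemma~\ref{Second3Dcorrection} of the order $\sqrt{T-t}\,\lambda(t)$) and optimizing this threshold against the radial profile of $\|\phi^{o}\|_{\star\star,o}$, so that no additional $|\ln T|$-losses appear and the exponential factor $e^{-a\sqrt{2|\ln(\cdots)|}}$ is preserved at the level claimed in~\eqref{ExpansionGradpsiout}. The proof then concludes by rescaling and applying standard elliptic estimates, exactly as in Proposition~\ref{ExpansionGradInner}.
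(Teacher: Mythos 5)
Your proposal follows essentially the same route as the paper: the paper's own proof consists precisely of splitting $\phi^{o}$ by a cutoff at scale $\sqrt{T}$ around $\xi$, handling the localized piece by the elliptic-integral expansion arguments of Theorem~\ref{Expansionv0} and Proposition~\ref{3dGRADIENTExpansion}, and estimating the remaining piece directly using the decay encoded in $\|\cdot\|_{\star\star,o}$. Your write-up simply fills in the details that the paper delegates to those earlier results (the $K$-expansion, the $\omega_{1}$ factor, the $\frac{1}{r}\partial_{r}$ correction, and the \eqref{logmass}-type splitting), and these are consistent with how those arguments are carried out there.
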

\begin{proof}
	We begin by splitting $\phi^{o}$ into two parts:
	\begin{align*}
		\phi^{o}(r,z,t) = \phi^{o}_{(1)}(r,z,t) + \phi^{o}_{(2)}(r,z,t),
	\end{align*}
	where $\phi^{o}_{(1)}(r,z,t) = \phi^{o} \, \chi_{0}\left(\frac{|(r,z) - \xi|}{\sqrt{T}}\right)$ and $\phi^{o}_{(2)}(r,z,t) = \phi^{o} \left(1 - \chi_{0}\left(\frac{|(r,z) - \xi|}{\sqrt{T}}\right)\right)$. The term $\phi^{o}_{(1)}$ can be handled similarly to the arguments used in Theorem~\ref{Expansionv0} and Proposition~\ref{3dGRADIENTExpansion}. The second term can be estimated directly, taking advantage of the exponential decay previously established for $\phi^{o}$.
\end{proof}

\section{Proof of Theorem \ref{Mthm}}\label{proofThm1Sec}
The goal of this section is to complete the proof of Theorem~\ref{Mthm}. As discussed in the preceding sections, it suffices to solve the following set of equations:
\begin{align}\label{innerEqFINmode0THM}
	\begin{cases}
		\lambda^{2} \partial_{t} \phi_{1} = L[\phi_{1}] + B[\phi_{1}] + \mathcal{E}[\phi_{1}] + f - m_{0}[f] W_{0} - m_{2}[f] W_{2}, & \text{in } \mathbb{R}^{2} \times (0,T), \\
		\phi_{1}(\cdot, 0) = \phi_{1,0}, & \text{in } \mathbb{R}^{2},
	\end{cases}
\end{align}
\begin{align}\label{innerEQfinalMode0NOORTTHM}
	\begin{cases}
		\lambda^{2} \partial_{t} \phi_{2} = L[\phi_{2}] + B[\phi_{2} \hat{\chi}] + m_{2}[f] W_{2}, & \text{in } \mathbb{R}^{2} \times (0,T), \\
		\phi_{2}(\cdot, 0) = \phi_{2,0}, & \text{in } \mathbb{R}^{2},
	\end{cases}
\end{align}
\begin{align}\label{innerEqfinalMode1THM}
	\begin{cases}
		\lambda^{2} \partial_{t} \phi_{3} = L[\phi_{3}] + B[\phi_{3} \hat{\chi}] + \mathcal{F}[\phi_{3}] + f_{3} - m_{1,j}[f_{3}] W_{1,j}, & \text{in } \mathbb{R}^{2} \times (0,T), \\
		\phi_{3}(\cdot, 0) = \phi_{3,0}, & \text{in } \mathbb{R}^{2},
	\end{cases}
\end{align}
where the source terms \( f \) and \( f_{3} \) are given in Section~\ref{RefInnerOutGluingSystem}.
Additionally, we need to determine the parameters \( \alpha_{1} \), \( q_{1}(t) \) (for \( t \in (0,T) \)), and \( \lambda_{1}(t) \) (for \( t \in (-\varepsilon(T), T) \)) in such a way that the following orthogonality conditions are fulfilled:
\begin{align}\label{OrthoCondTHM}
	\begin{cases}
		m_{0}[f[\phi_{1} + \phi_{2} + \phi_{3}, \varphi^{o}, \mathbf{p}_{1}]] = 0, \\
		m_{1,j}[f_{3}[\phi_{1} + \phi_{2} + \phi_{3}, \varphi^{o}, \mathbf{p}_{1}]] = 0, & \text{for } j = 1, 2, \quad t \in (0,T).
	\end{cases}
\end{align}
This system is coupled with the following outer problem:
\begin{align}\label{outerEqfinalTHM}
	\begin{cases}
		\partial_{t} \varphi^{o} = \Delta_{(r,z)} \varphi^{o} + \frac{1}{r} \partial_{r} \varphi^{o} - \nabla v_{0} \cdot \nabla \varphi^{o} + g(\phi_{1} + \phi_{2} + \phi_{3}, \varphi^{o}, \mathbf{p}_{1}), & \text{in } \mathbb{R}^{+} \times \mathbb{R} \times (0,T), \\
		\varphi^{o}(\cdot, 0) = 0, & \text{in } \mathbb{R}^{+} \times \mathbb{R},
	\end{cases}
\end{align}
where the source term is given in Section \ref{RefInnerOutGluingSystem}.
For the sake of clarity, we briefly summarize the results obtained in Sections~\ref{reviewInnerTheories} and~\ref{outerTheorySec}. \newline
\underline{Inner solution operators:} Let $\nu$, $p$, and $\epsilon$ be positive constants, and let $m \in \mathbb{R}$. For a function $h = h(y, \tau)$, we introduce the norm
\begin{align}\label{RHSinnernorm}
	\|h\|_{0; \nu, m, p, \epsilon} = \inf \left\{ K > 0 \;\middle|\; |h(y, \tau)| \le \frac{K}{\tau^{\nu} (\ln \tau)^m} \cdot \frac{1}{(1+|y|)^p}
	\begin{cases}
		1 & \text{for } |y| \le \sqrt{\tau}, \\[4pt]
		\frac{\tau^{\epsilon/2}}{|y|^{\epsilon}} & \text{for } |y| \ge \sqrt{\tau}
	\end{cases} \right\}.
\end{align}
Analogously, for solutions $\phi = \phi(y, t)$, we define the corresponding norm as
\begin{align}\label{SolutionINNERNORM}
	\|\phi\|_{1; \nu, m, p, \epsilon} = \inf \left\{ K > 0 \;\middle|\; |\phi(y, t)| + (1+|y|)\, |\nabla_y \phi(y, t)| \le \frac{K}{\tau^{\nu} (\ln \tau)^m} \cdot \frac{1}{(1+|y|)^p}
	\begin{cases}
		1 & \text{for } |y| \le \sqrt{\tau}, \\[4pt]
		\frac{\tau^{\epsilon/2}}{|y|^{\epsilon}} & \text{for } |y| \ge \sqrt{\tau}
	\end{cases} \right\}.
\end{align}
In order to solve equation~\eqref{innerEqFINmode0THM}, we make use of Proposition~\ref{PropMode0Mass0}, taking into consideration the given initial data. The solution is given by $\phi_{1}=\mathcal{T}_{p}^{i,2}[f - m_{0}[f] W_{0} - m_{2}[f] W_{2}]$, for which the following estimate holds:
\[
\|\phi_{1}\|_{1; \nu - \frac{1}{2}, m + \frac{q+1}{2}, 4, 2} \le C \|f - m_{0}[f] W_{0} - m_{2}[f] W_{2}\|_{0; \nu, m, 6 + \sigma, \epsilon}.
\]
Similarly, equation~\eqref{innerEQfinalMode0NOORTTHM} can be addressed using Proposition~\ref{PropMode0Mass0ANTICIP}, again with the initial condition specified. The solution is given by $\phi_{2}=\mathcal{T}_{p}^{i,2}[m_{2}[f] W_{2}]$. In this case, we obtain
\[
\|\phi_{2}\|_{1;\nu-1,m+q+1,4,2+\sigma+\epsilon} \le \frac{C}{(\ln \tau_{0})^{1-q}} \| m_{2}[f] W_{2}\|_{0;\nu,m,6+\sigma,\epsilon},
\]
where $\tau_0= \tau (0)$, see \eqref{tauvar}.
Lastly, equation~\eqref{innerEqfinalMode1THM} is treated by applying Proposition~\ref{PropMode1ANTICIP}, here  $\phi_{3}=\mathcal{T}_{p}^{i,2}[f_{3} - m_{1,j}[f_{3}] W_{1,j}]$ and we get the estimate
\[
\|\phi_{3}\|_{1; \nu, m, 3+\varsigma, 2+\epsilon} \leq C \| f_{3} - m_{1,j}[f_{3}] W_{1,j}\|_{ 0;\nu, m, 5+\varsigma, \epsilon}.
\]
\newline
\underline{Outer solution operator:} Consider a function $g : \mathbb{R}^{+} \times \mathbb{R} \times (0,T) \to \mathbb{R}$. We define the norm $\|g\|_{\star,o}$ as the smallest constant $K$ such that, for all $(r,z,t) \in \mathbb{R}^{+} \times \mathbb{R} \times (0,T)$, the following inequality holds:
\begin{align*}
	|g(r,z,t)| \le K \begin{cases}
		\frac{e^{-a \sqrt{2|\ln(|(r,z) - \xi|^{2} + (T-t))|}}}{\left(|(r,z) - \xi|^{2} + (T-t)\right)^{2} |\ln((T-t) + |(r,z) - \xi|^{2})|^{b}}, & \text{if } |(r,z) - \xi| \le \sqrt{T}, \\[5pt]
		\frac{e^{-a \sqrt{2|\ln T|}}}{T^{2} |\ln T|^{b}}, & \text{if } \sqrt{T} \le |(r,z) - \xi| \le M, \\[5pt]
		\frac{e^{-a \sqrt{2|\ln T|}}}{T^{2} |\ln T|^{b}} e^{- \frac{|(r,z)|^{2}}{4(t+1)}}, & \text{if } |(r,z) - \xi| \ge M.
	\end{cases}
\end{align*}
We also define a corresponding norm for the solution $\phi = \phi(r,z,t)$. Specifically, the norm $\|\phi\|_{\star\star,o}$ is the smallest constant $K$ such that the following estimate holds:
\begin{align}\label{OuterSolExpectedEstimates}
	|\phi(r,z,t)| + (\lambda + |(r,z) - \xi|) |\nabla_{(r,z)} \phi| \le K \begin{cases}
		\frac{e^{-a\sqrt{2|\ln(|(r,z) - \xi|^{2} + (T - t))|}}}{\left(|(r,z) - \xi|^{2} + (T - t)\right) |\ln((T - t) + |(r,z) - \xi|^{2})|^{b}}, & \text{if } |(r,z) - \xi| \le \sqrt{T}, \\[5pt]
		\frac{e^{-a\sqrt{2|\ln T|}}}{T |\ln T|^{b}}, & \text{if } \sqrt{T} \le |(r,z) - \xi| \le M, \\[5pt]
		\frac{e^{-a\sqrt{2|\ln T|}}}{T |\ln T|^{b}} e^{- \frac{|(r,z)|^{2}}{4(t+1)}}, & \text{if } |(r,z) - \xi| \ge M.
	\end{cases}
\end{align}
To solve equation~\eqref{outerEqfinalTHM}, we apply Proposition~\ref{outerSolvinTheory}. As a result, the solution can be obtained by the operator $\varphi^{o}=\mathcal{T}_{p}^{o}[g]$ and we obtain the estimate
\begin{align*}
	\|\varphi^{o}\|_{\star\star,o} \le C \|g\|_{\star,o}.
\end{align*}
The proof proceeds via a fixed point argument. Since the majority of the terms in the right-hand side can be treated in the same way as in Section~5 of \cite{BDdPM}, we will concentrate only on the key differences.
We now define the norms that are instrumental for analyzing and solving the orthogonality conditions given in \eqref{OrthoCondTHM}.
\begin{align}\label{normforalpha1}
	\|\alpha_{1}\|^{(1)}_{C^{1};\nu+\frac{1}{2},m_{1}} = \sup_{t \in [0,T)} \left( e^{(\nu+\frac{1}{2}) \sqrt{2|\ln(T-t)|}} |\ln(T-t)|^{m_{1}} \left( |\alpha_1 (t)| + (T - t) |\dot{\alpha_1}(t)| \right) \right),
\end{align}
\begin{align}\label{normforxi1}
	\|\xi_{1}\|^{(2)}_{C^{1};\varrho,m_{2}} = \sup_{t \in [0,T)} \left( \frac{e^{\varrho \sqrt{2|\ln(T-t)|}} |\ln(T-t)|^{m_{2}}}{\sqrt{T - t}} \left( |\xi_{1}(t)| + (T - t) |\ln(T - t)|^{1/2} |\dot{\xi}_{1}(t)| \right) \right),
\end{align}
\begin{align}\label{normforlambda1}
	\|\lambda_{1}\|_{C^{2};\nu - \frac{1}{2},m_{3}} = \sup_{t \in [0,T)} \frac{e^{(\nu - \frac{1}{2}) \sqrt{2|\ln(T - t)|}} |\ln(T - t)|^{m_{3}}}{\sqrt{T - t}} \left( |\lambda_{1}(t)| + (T - t) |\dot{\lambda}_{1}(t)| + (T - t)^{2} |\ddot{\lambda}_{1}(t)| \right).
\end{align}

\begin{proof}[Proof of Theorem \ref{Mthm}]
We introduce the notation
\begin{align*}
	\vec{\phi} = (\phi_{1}, \phi_{2}, \phi_{3}, \varphi^{o}, \mathbf{p}),
\end{align*}
and define the operator
\begin{align*}
	\mathcal{A}[\vec{\phi}] = (\mathcal{A}_{i,1}[\vec{\phi}], \mathcal{A}_{i,2}[\vec{\phi}], \mathcal{A}_{i,3}[\vec{\phi}], \mathcal{A}_{o}[\vec{\phi}], \mathcal{A}_{p}[\vec{\phi}]),
\end{align*}
where the component $\mathcal{A}_{p}$ is given by
\begin{align*}
	\mathcal{A}_{p}[\vec{\phi}] = (\mathcal{A}_{p,\lambda}[\vec{\phi}], \mathcal{A}_{p,\alpha_{1}}[\vec{\phi}], \mathcal{A}_{p,\xi_{1}}[\vec{\phi}]).
\end{align*}
These operators correspond to the solution mappings for equations~\eqref{innerEqFINmode0THM}, \eqref{innerEQfinalMode0NOORTTHM}, \eqref{innerEqfinalMode1THM}, and~\eqref{outerEqfinalTHM}, along with the orthogonality conditions~\eqref{OrthoCondTHM}.Our goal is to find a fixed point $\vec{\phi}$ such that $\vec{\phi}=\mathcal{A}[\vec{\phi}]$. \newline 
To proceed, we define the functional spaces in which the operator $\mathcal{A}$ will act.
\begin{align*}
	X_{i,1} = \left\{ \phi_{1} \in L^{\infty}(\mathbb{R}^{2} \times (0,T)) \ \middle| \ 
	\begin{aligned}
		& \nabla_{y} \phi_{1} \in L^{\infty}(\mathbb{R}^{2} \times (0,T)), \ \|\phi_{1}\|_{1; \nu - \frac{1}{2}, \frac{q+1}{2}, 4, 2} < \infty, \\
		& \int_{\mathbb{R}^{2}} \phi_{1}(y,t) \, dy = 0, \quad \int_{\mathbb{R}^{2}} \phi_{1}(y,t) \, y \, dy = 0, \quad \int_{\mathbb{R}^{2}} \phi_{1}(y,t) |y|^{2} \, dy = 0, \quad t \ge 0
	\end{aligned}
	\right\},
\end{align*}
\begin{align*}
	X_{i,2} = \left\{ \phi_{2} \in L^{\infty}(\mathbb{R}^{2} \times (0,T)) \ \middle| \ 
	\begin{aligned}
		& \nabla_{y} \phi_{2} \in L^{\infty}(\mathbb{R}^{2} \times (0,T)), \ \|\phi_{2}\|_{1; \nu - \frac{1}{2}, \frac{q+1}{2}, 4, 2 + \sigma + \epsilon} < \infty, \\
		& \int_{\mathbb{R}^{2}} \phi_{2}(y,t) \, dy = 0, \quad \int_{\mathbb{R}^{2}} \phi_{2}(y,t) \, y \, dy = 0, \quad t \ge 0
	\end{aligned}
	\right\},
\end{align*}
\begin{align*}
	X_{i,3} = \left\{ \phi_{3} \in L^{\infty}(\mathbb{R}^{2} \times (0,T)) \ \middle| \ 
	\begin{aligned}
		& \nabla_{y} \phi_{3} \in L^{\infty}(\mathbb{R}^{2} \times (0,T)), \ \|\phi_{3}\|_{1; \nu, 0, 3 + \varsigma, 2 + \epsilon} < \infty, \\
		& \int_{\mathbb{R}^{2}} \phi_{3}(y,t) \, dy = 0, \quad \int_{\mathbb{R}^{2}} \phi_{3}(y,t) \, y \, dy = 0, \quad \int_{\mathbb{R}^{2}} \phi_{3}(y,t) |y|^{2} \, dy = 0, \quad t \ge 0
	\end{aligned}
	\right\},
\end{align*}
\begin{align*}
	X_{o} = \left\{ \varphi^{o} \in L^{\infty}(\mathbb{R}^{2} \times [0,T)) \ \middle| \ \nabla_{y} \varphi^{o} \in L^{\infty}(\mathbb{R}^{2} \times (0,T)), \ \|\varphi^{o}\|_{\star,0} < \infty \right\},
\end{align*}
\begin{align*}
	X_{p} = \left\{ (\lambda_{1}, \alpha_{1}, \xi_{1}) \in C^{1}([0,T)) \ \middle| \ 
	\|\alpha_{1}\|^{(1)}_{C^{1}; \nu + \frac{1}{2}, m_{1}} < \infty, \ 
	\|\xi_{1}\|^{(2)}_{C^{1}; \varrho, m_{2}} < \infty, \ 
	\|\lambda_{1}\|_{C^{2}; \nu + \frac{1}{2}, m_{3}} < \infty \right\}.
\end{align*}
The fixed point strategy we will adopt depends on the appropriate selection of the constants $\nu$, $q$, $\sigma$, $\zeta$, $\epsilon$, $a$, $b$, $\rho$, $m_{1}$, $m_{2}$, $m_{3}$.
\newline
As noted earlier, in light of the analysis developed in Section~5 of \cite{BDdPM}, it is sufficient to address the modifications introduced by the additional terms appearing on the right-hand sides. \newline
The most delicate contributions arise from the nonlocal term and the operator \( \frac{1}{r} \partial_{r} \). However, for the inner problems and the associated orthogonality conditions, the nonlocal terms do not introduce substantial differences, owing to the asymptotic expansions provided in Propositions~\ref{3dGRADIENTExpansionpsilambda}, \ref{ExpansionGradInner}, and~\ref{3dgradientouter}. \newline
We now estimate the following term:
\begin{align*}
	\left|\lambda^{4} \frac{1}{r} \partial_{r}\left(\frac{1}{\lambda^{2}} \phi_{1}\right)\right| \lesssim \lambda(t) \cdot \frac{\|\phi_{1}\|_{1; \nu - \frac{1}{2}, m + \frac{q+1}{2}, 4, 2}}{\tau^{\nu} (\ln \tau)^m} \frac{1}{(1+|y|)^{5}} 
	\begin{cases}
		1 & \text{if } |y| \le \sqrt{\tau}, \\[4pt]
		\frac{\tau}{|y|^{2}} & \text{if } |y| \ge \sqrt{\tau}.
	\end{cases}
\end{align*}
By performing the change of variables as in~\eqref{tauvar}, we observe that \( |\lambda(t)| \lesssim e^{-\ln^{2} \tau} \cdot \frac{\ln \tau}{\tau} \). Therefore, the contribution of this term, as well as those of its mass, and its first and second moments, may be regarded as lower order perturbations. A similar estimate holds for the term  \( \lambda^{4} \frac{1}{r} \partial_r \left( \frac{1}{\lambda^{2}} (\phi_{2} + \phi_{3}) \right) \), whose contribution can likewise be treated as a lower order perturbation. The treatment of the inner equations and the orthogonality conditions follows the same approach as in \cite{DdPMW}. For this reason, we omit the detailed exposition here. \newline
The outer equation requires special attention due to the appearance of additional terms. As an illustrative example, consider the quantity
\[
\nabla_{(r,z)}\phi^{o} \cdot \nabla_{(r,z)}\psi_{\lambda}(1 - \chi).
\]
Using Proposition~\ref{3dGRADIENTExpansionpsilambda}, we estimate this term in the following spatial regimes:

\medskip
\noindent
\textbf{Case 1:} If $\sqrt{\delta(T - t)} \le |(r,z) - \xi| \le \sqrt{T}$, then
\begin{align*}
	|\nabla_{(r,z)}\phi^{o} \cdot \nabla_{(r,z)}\psi_{\lambda}(1 - \chi)|
	&\lesssim \|\phi^{o}\|_{\star,o}  \frac{e^{-\sqrt{2|\ln(T - t)|}}}{|(r,z) - \xi|} \sqrt{|\ln(T - t)|} \\
	&\quad \cdot \frac{e^{-a\sqrt{2|\ln(|(r,z) - \xi|^{2} + (T - t))|}}}
	{\left(|(r,z) - \xi|^{2} + (T - t)\right) |\ln((T - t) + |(r,z) - \xi|^{2})|^{b}} \\
	&\lesssim e^{-\sqrt{2|\ln T|}} \sqrt{|\ln T|} 
	\frac{e^{-a\sqrt{2|\ln(|(r,z) - \xi|^{2} + (T - t))|}}}
	{(|(r,z) - \xi|^{2} + (T - t))^{2}} \|\phi^{o}\|_{\star,o}.
\end{align*}

\medskip
\noindent
\textbf{Case 2:} If $\sqrt{T} \le |(r,z) - \xi| \le M$, then
\begin{align*}
	|\nabla_{(r,z)}\phi^{o} \cdot \nabla_{(r,z)}\psi_{\lambda}(1 - \chi)|
	&\lesssim \|\phi^{o}\|_{\star,o} \frac{e^{-a\sqrt{2|\ln T|}}}{T}  \frac{e^{-\sqrt{2|\ln(T - t)|}}}{|(r,z) - \xi|} \\
	&\lesssim \sqrt{T} \frac{e^{-a\sqrt{2|\ln T|}}}{T^{2}} \|\phi^{o}\|_{\star,o}.
\end{align*}

\medskip
\noindent
\textbf{Case 3:} If $|(r,z) - \xi| \ge M$, then
\begin{align*}
	|\nabla_{(r,z)}\phi^{o} \cdot \nabla_{(r,z)}\psi_{\lambda}(1 - \chi)|
	&\lesssim \|\varphi^{o}\|_{\star,o}  \frac{e^{-a\sqrt{2|\ln T|}}}{T} 
 \frac{e^{-\sqrt{2|\ln(T - t)|}}}{|(r,z) - \xi|}  e^{- \frac{|(r,z)|^{2}}{4(t+1)}} \\
	&\lesssim \frac{T}{M}  e^{-\sqrt{2|\ln T|}} \frac{e^{-a\sqrt{2|\ln T|}}}{T^{2}}  e^{- \frac{|(r,z)|^{2}}{4(t+1)}}\|\varphi^{o}\|_{\star,o}.
\end{align*}
Now, observing that $\varphi_{\lambda}$ and the inner solutions are localized, the remaining terms can be handled in an analogous manner. 
We conclude the argument by  taking $T$ sufficiently small and selecting the constants in accordance with the choices made in \cite{BDdPM}.

\end{proof}

\subsection{The multi-ring framework}\label{multiringsproof}
We omit most of the details, as they follow directly from our construction. To illustrate the methodology, we focus on the case $k = 2$. \newline
We define the cutoff functions
\begin{align}
	\chi_{j}(x,t) = \chi_{0}\left(\frac{|x - \xi_{j}(t)|}{\sqrt{\delta(T - t)}}\right), \quad j = 1, 2.
\end{align}
Recall that
\begin{align*}
	\xi_{1}(t) \to \xi_{1}(T) = (q^{(1)}_{1}(T), q^{(1)}_{2}(T)) \in \mathbb{R}^{+} \times \mathbb{R}, \quad
	\xi_{2}(t) \to \xi_{2}(T) = (q^{(2)}_{1}(T), q^{(2)}_{2}(T)) \in \mathbb{R}^{+} \times \mathbb{R}.
\end{align*}
As a first ansatz, we consider the decomposition
\begin{align*}
	u_{1}(x,t) = u_{1}^{(1)}(x,t) + u_{1}^{(2)}(x,t),
\end{align*}
where
\begin{align*}
	u_{1}^{(j)}(x,t) = \frac{\alpha_{j}(t)}{\lambda_{j}(t)^{2}} U\left(\frac{x - \xi_{j}(t)}{\lambda_{j}(t)}\right) \chi_{j}(x,t)
	+ \varphi_{\lambda_{j}}(|x - q_{j}|, t), \quad j = 1, 2.
\end{align*}
Minor modifications are needed in the proof of Theorem~\ref{Expansionv0} and Proposition~\ref{3dGRADIENTExpansion}. Specifically, the function $\omega_{1}(r,z)$ must be replaced by
\begin{align*}
	\omega^{(i)}(r,z) = \frac{2 q^{(i)}_{1}}{|(r,z) + \xi_{i}|}
	\left( 1 + \frac{1}{4} \frac{|(r,z) - \xi_{i}|^{2}}{|(r,z) + \xi_{i}|^{2}} \right),
\end{align*}
which satisfies the estimate
\begin{align*}
	|\omega^{(i)}(r,z) - 1| = O\left(|(r,z) - \xi_{i}|\right).
\end{align*}
We proceed following the approach described in Section \ref{innerouterSec}. We express
\begin{align*}
	\Phi(x,t) = \sum_{j=1,2} \left[ \frac{1}{\lambda_{j}^{2}} \phi^{i}_{(j)}(x,t)\chi_{j} + \varphi^{o}_{(j)}(x,t) \right],
\end{align*}
and we require that
\begin{align*}
	S(u_{1} + \Phi) = 0.
\end{align*}
To this end, we linearize the operator $S$ around $u_{1}^{(1)}$, yielding
\begin{align*}
	S(u_{1} + \Phi) = &\, S(u_{1}^{(1)}) - \partial_{t} \left( \frac{1}{\lambda^{2}_{1}} \phi^{i}_{(1)} \chi_{1} \right) - \partial_{t} \varphi^{o}_{(1)} + \mathcal{L}_{u_{1}^{(1)}}\left[ \frac{1}{\lambda^{2}_{1}} \phi^{i}_{(1)} \chi_{1} \right] + \mathcal{L}_{u_{1}^{(1)}}[ \varphi^{o}_{(1)} ] \\
	& - \partial_{t} \left( u_{1}^{(2)} + \frac{1}{\lambda^{2}_{2}} \phi^{i}_{(2)} \chi_{2} + \varphi^{o}_{(2)} \right) + \mathcal{L}_{u_{1}^{(1)}}\left[ u_{1}^{(2)} + \frac{1}{\lambda^{2}_{2}} \phi^{i}_{(2)} \chi_{2} + \varphi^{o}_{(2)} \right] \\
	& - \text{div}_{(r,z)} \left( (u_{1}^{(2)} + \Phi) \nabla_{(r,z} (-\Delta_{\mathbb{R}^{3}})^{-1}(u_{1}^{(2)} + \Phi) \right),
\end{align*}
where we recall the definition:
\begin{align*}
	\mathcal{L}_{u_{1}}[\phi] = \Delta_{(r,z)} \phi + \frac{1}{r} \partial_{r} \phi - \text{div}_{(r,z)}\left( \phi \nabla_{(r,z)} v_{1} \right) - \text{div}_{(r,z)}\left( u_{1} \nabla_{(r,z)} \psi \right), \quad \psi(r,z) = (-\Delta_{\mathbb{R}^{3}})(\phi(r,z)).
\end{align*}
Next, we observe that
\begin{align*}
	- \text{div}_{(r,z)} \left( (u_{1}^{(2)} + \Phi) \nabla_{(r,z)} (-\Delta_{\mathbb{R}^{3}})^{-1}(u_{1}^{(2)} + \Phi) \right) =
	&-\text{div}_{(r,z)} \left( u_{1}^{(2)} \nabla_{(r,z)} (-\Delta_{\mathbb{R}^{3}})^{-1} u_{1}^{(2)} \right)  \\
    &-\text{div}_{(r,z)} \left( u_{1}^{(2)} \nabla_{(r,z} (-\Delta_{\mathbb{R}^{3}})^{-1} \Phi \right) \\
	& - \text{div}_{(r,z)} \left( \Phi \nabla_{(r,z)} (-\Delta_{\mathbb{R}^{3}})^{-1} \Phi \right).
\end{align*}
We may also write
\begin{align*}
\mathcal{L}_{u_{1}^{(1)}}\left[ u_{(2)}^{i} + \frac{1}{\lambda_{i}^{2}} \phi_{i}^{(2)} \chi_{i} + \varphi^{o}_{(2)} \right] 
= \left( \Delta_{(r,z)} + \frac{1}{r} \partial_{r} \right) \left( u_{(2)}^{i} + \frac{1}{\lambda_{i}^{2}} \phi_{i}^{(2)} \chi_{i} + \varphi^{o}_{(2)} \right) + \mathcal{E}_{j,i}.
\end{align*}
Hence, we obtain the following expression:
\begin{align*}
	S(u_{1} + \Phi) = 
	& - \partial_{t} \left( \frac{1}{\lambda^{2}_{1}} \phi^{i}_{(1)} \chi_{1} \right) - \partial_{t} \varphi^{o}_{(1)} + \mathcal{L}_{u_{1}^{(1)}}\left[ \frac{1}{\lambda^{2}_{1}} \phi^{i}_{(1)} \chi_{1} \right] + \mathcal{L}_{u_{1}^{(1)}}[ \varphi^{o}_{(1)} ] \\
	& - \partial_{t} \left( \frac{1}{\lambda^{2}_{2}} \phi^{i}_{(2)} \chi_{2} \right) + \partial_{t} \varphi^{o}_{(2)} + \mathcal{L}_{u_{1}^{(2)}}\left[ \frac{1}{\lambda^{2}_{2}} \phi^{i}_{(2)} \chi_{2} \right] + \mathcal{L}_{u_{1}^{(2)}}[ \varphi^{o}_{(2)} ] \\
	& + \left( S(u_{1}^{(1)}) + S(u_{2}^{(2)}) + \mathcal{E}_{1,2} + \mathcal{E}_{2,1} - \text{div}_{(r,z)} ( \Phi \nabla_{(r,z)} (-\Delta_{\mathbb{R}^{3}})^{-1} \Phi ) \right) \sum_{i=1,2} \chi_{i} \\
	& + \left( S(u_{1}^{(1)}) + S(u_{2}^{(2)}) + \mathcal{E}_{1,2} + \mathcal{E}_{2,1} - \text{div}_{(r,z)} ( \Phi \nabla_{(r,z)} (-\Delta_\mathbb{R}^{3})^{-1} \Phi ) \right) \left( 1 - \sum_{i=1,2} \chi_{i} \right).
\end{align*}
It is evident that the resulting system will consist of two inner equations for $\phi_{(j)}^{i}$ and two outer equations for $\varphi^{o}_{(j)}$. Observe that the contribution from the mixed terms is negligible, as these involve factors that are localized far from the region of interest and, within the relevant domain, they only generate lower-order terms.

\end{document}